\newtheorem{theorem}{Theorem}[section]
\newtheorem{corollary}[theorem]{Corollary}
\newtheorem{lemma}[theorem]{Lemma}
\newtheorem{definition}[theorem]{Definition}
\newtheorem{remark}[theorem]{Remark}
\newtheorem{alg}{Algorithm}[section]
\numberwithin{equation}{section}  
\newcommand{\cJ}{{\mathcal J}}      
\newcommand{\cT}{{\mathcal T}}      
\begin{document}

\title[MG for Elliptic Problems with Discontinuous Coefficients]
{Multilevel Preconditioners for Reaction-Diffusion Problems with Discontinuous Coefficients}

\author[T. Kolev]{Tzanio V. Kolev}
\email{kolev1@llnl.gov}
\address{Center for Applied Scientific Computing, \\
Lawrence Livermore National Laboratory, \\
P.O. Box 808, L-561, Livermore, CA 94551, USA}

\author[J. Xu]{Jinchao Xu}
\email{xu@math.psu.edu}
\address{Department of Mathematics\\
         Penn. State University\\
         University Park, PA 16802, USA}

\author[Y. Zhu]{Yunrong Zhu}
\email{zhuyunr@isu.edu}
\address{Department of Mathematics\\
         Idaho State University\\
         Pocatello, ID 83209-8085, USA}

\subjclass{65F10, 65N20, 65N30}

\thanks{This work performed under the auspices of the U.S. Department of Energy
  by Lawrence Livermore National Laboratory under Contract DE-AC52-07NA27344,
  LLNL-JRNL-663816.  Jinchao Xu was supported in part by NSF DMS 1217142 and DOE Award
  \#DE-SC0009249.  Yunrong Zhu was supported in part by NSF DMS 1319110, and in part by
  University Research Committee Grant No. F119 at Idaho State University,
  Pocatello, Idaho. }

\date{\today}

\keywords{Reaction-Diffusion Equations, Multigrid, BPX, Discontinuous Coefficients, Robust Solver, Multilevel Preconditioners}

\begin{abstract}
In this paper, we extend some of the multilevel convergence results obtained by Xu and Zhu in [Xu and Zhu, M3AS 2008], to the case of second order linear reaction-diffusion equations. Specifically, we consider the multilevel preconditioners for solving the linear systems arising from the linear finite element approximation of the problem, where both diffusion and reaction coefficients are piecewise-constant functions. We discuss in detail the influence of both the discontinuous reaction and diffusion coefficients to the performance of  the classical BPX and multigrid V-cycle preconditioner. 
\end{abstract}

\maketitle


%
%

\section{Introduction}
In this paper, we will discuss the convergence of multilevel preconditioners for the linear finite element approximation
of the second order elliptic boundary value problem with discontinuous coefficients:
\begin{equation}\label{eq:model}
    \left\{\begin{array}{rl}
      -\nabla\cdot(\omega\nabla u) + \rho \, u=f &\mbox{ in } \Omega,\\
      u=0 & \mbox{ on } \Gamma_D, \\
      \omega\frac{\partial u}{\partial n}=g_N & \mbox{ on } \Gamma_N
      \end{array}
    \right.
\end{equation}
where $\Omega\in \mathbb{R}^d (d= 2\mbox{ or } 3)$ is a polygonal or
polyhedral domain with Dirichlet boundary $\Gamma_D$ and Neumann
boundary $\Gamma_N.$
While such problems arise in a wide variety of practical applications, our interest in \eqref{eq:model}
is motivated by the subspace problems in auxiliary-space preconditioners for the definite Maxwell
equations \cite{hx,ams_jcm}.

Multigrid algorithms are a family of powerful solution techniques
which are frequently applied to the finite element discretizations of
\eqref{eq:model}. When the coefficients $\omega > 0$ and $\rho \geq 0$ are constant, it is well known that Multigrid is an efficient optimal solver; while its additive version, the BPX algorithm, is an optimal preconditioner (see for example \cite{Bramble.J;Pasciak.J;Xu.J1990,Hackbusch.W1985}.)
In many practical applications, however, the coefficients  $\omega$ and $\rho$ of
\eqref{eq:model} describe material properties, which can be
considered constant in the material subdomains, but may have large jumps
on the material interfaces.

There have been a lot of works devoted to developing efficient iterative solvers for solving the finite element discretization of \eqref{eq:model}, which are robust with respect to the jumps in the diffusion coefficient $\omega$ (when $\rho \equiv 0$),  (see \cite{Bramble.J;Xu.J1991,Wang.J1994,Wang.J;Xie.R1994,Dryja.M;Sarkis.M;Widlund.O1996,Oswald.P1999c,Graham.I;Hagger.M1999} for examples). For general cases, one usually need some special techniques to obtain robust iterative methods, (cf. \cite{Chan.T;Wan.W2000,Scheichl.R;Vainikko.E2007,Graham.I;Lechner.P;Scheichl.R2007,Aksoylu.B;Graham.I;Klie.H;Scheichl.R2008}). Recently, Xu and Zhu addressed in \cite{Xu.J;Zhu.Y2008,Zhu.Y2008} the performance of the BPX and Multigrid V-cycle preconditioners for \eqref{eq:model} in the case of discontinuous $\omega$ and  $\rho=0$. It was shown that the jumps in $\omega$ affect only a small number of eigenvalues, and therefore the (asymptotic) convergence rate of the preconditioned conjugate gradient (PCG) method is uniform with respect to the jumps and the mesh size.
See also \cite{Galvis.J;Efendiev.Y2010a,Scheichl.R;Vassilevski.P;Zikatanov.L2012,Ayuso-de-Dios.B;Holst.M;Zhu.Y;Zikatanov.L2014,Chen.L;Holst.M;Xu.J;Zhu.Y2012,Zhu.Y2014}
and the references cited therein for further developments in different directions.

All the analysis mentioned above focused on pure diffusion equation, and very little attention has been paid for the case when $\rho$ is nonzero. In many applications, such as time discretization of heat conduction in composite materials, the equation involves a lower order term.  In the case that $\omega_{i} =\rho_{i}$, a robust overlapping domain decomposition  method was developed for a two-dimensional model problem in \cite{Cho.S;Nepomnyaschikh.S;Park.E2005}. Recently, the paper \cite{Kraus.J;Wolfmayr.M2013} discussed the performance of the algebraic multilevel iteration (AMLI) methods for the finite element discretizations for \eqref{eq:model} in 2D, which are based on a multilevel block factorization and polynomial stabilization.

In this paper, we study the performance of the classical multilevel preconditioners (BPX and multigrid V-cycle) on the finite element discretization of equation \eqref{eq:model}, with emphasis on the discussion of the influence of both the discontinuous  reaction and diffusion coefficients on the convergence of these multilevel preconditioners. We classify the coefficients in two different cases. In the first case, we require that both $\omega$ and $\rho$ have the same coefficient distribution, namely, if $\omega_{i} \ge \omega_{j}$ then $\rho_{i} \ge \rho_{j}$ and vice versa. Note that this includes the case when $\rho$ is a global constant. In this case, we recover the results from \cite{Xu.J;Zhu.Y2008}.  On the other hand, when $\omega$ and $\rho$ have different distributions, it seems that the performance of the preconditioners deteriorate with the jumps (see the numerical examples in Section~{\ref{sec:num3}}). In this case, we showed that the convergence rate depends on the minimal of the jumps in $\omega$ and $\rho$. As a special case, when $\omega$ is a global constant, or only varies moderately in the whole domain, we show that the multilevel preconditioners are robust with respect to both coefficients, and the mesh size.


The remainder of the paper is organized as follows: in the next section we
investigate the Jacobi and Gauss-Seidel preconditioners.
Then, in Section \ref{sec-interp}, we consider an interpolation
operator which is needed in the analysis of the BPX and Multigrid algorithms, carried out
in the following Section \ref{sec:multilevel}.
The developed theory is illustrated by several numerical experiments
collected in Section \ref{sec-numerical}.

Throughout the paper we use the standard notation for Sobolev spaces and their norms.
We will use the notation $x_1\lesssim y_1$, and $x_2\gtrsim y_2$,
whenever there exist constants $C_1, C_2$ independent of the mesh size
$h$ and the coefficients $\omega$ and $\rho$,
and such that $x_1 \le C_1 y_1$
and $x_2\ge C_2 y_2$, respectively. We also use the notation $x\simeq y$ for $C_{1} x \le y\le C_{2} x$.

\section{Notation and Preliminaries}\label{sec:pre}
 In this section, we establish the notations and review a few preliminary tools that will be needed for the subsequent analysis, following those in \cite{Xu.J;Zhu.Y2008}. We consider solving the model equation \eqref{eq:model} in a polyhedral domain $\Omega \subset \mathbb{R}^{d}$ for $d=2$ or 3, and assume that there is a set of non-overlapping subdomains $\{\Omega_m\}_{m=1}^M$ such that $\overline{\Omega}=\cup_{m=1}^{M} \overline{\Omega}_{m}$, on which the diffusion coefficient $\omega(x)$ and the reaction coefficient $\rho(x)$ are constants, denoted by $\omega_{m} :=\omega(x)|_{\Omega_{m}}$ and $\rho_{m}:=\rho(x)|_{\Omega_{m}}$ for each $m = 1, \cdots, M$ respectively.

Let $V=H_{D}^{1}(\Omega)$ be the space that consists of $H^{1}(\Omega)$ functions with vanishing trace on the Dirichlet boundary $\Gamma_D \subset \partial \Omega$. The variational problem of \eqref{eq:model} reads: Given $f\in L^{2}(\Omega)$ and $g_{N} \in H^{1/2}(\Gamma_{N})$, find $u\in V$ such that
$$
        a(u,v) = \int_{\Omega} fv \;dx + \int_{\Gamma_{N}} g_{N} v\; ds, \qquad \forall v\in H_{D}^{1}(\Omega),
$$
where the bilinear form $a(\cdot,\cdot)$ is given by
$$
a(u,v) = \sum_{m=1}^{M}  \int_{\Omega_{m}} \omega_m\nabla u\cdot \nabla v\; dx
+  \sum_{m=1}^{M}\int_{\Omega_{m}} \rho_m u v\; dx\,.
$$
For the analysis of this paper, we will need the following weighted semi-norms and norms. Given any piecewise-constant
coefficient $\tau=\{\tau_{1}, \cdots, \tau_{M}\}>0$, we define weighted $L^{2}$ norm and $H^1$ semi-norm
by
$$
\|u\|_{0,\tau}^2 = (u,u)_{0,\tau} = \sum_{m=1}^{M} \tau_m \|u\|^2_{L^{2}(\Omega_m)} \,,
\qquad\text{and}\qquad
|u|_{1,\tau}^2 = \sum_{m=1}^{M} \tau_m |u|^2_{H^1(\Omega_m)} \,.
$$
In this notation, the bilinear form of interest is
$a(u,u) = |u|_{1,\omega}^2 + \|u\|_{0,\rho}^2$. With a little abuse of the notation, we will use the same notation for the case when  $\rho =0$ in some subdomains of $\Omega$, although in this case $\|\cdot\|_{0,\rho}$ is not a norm.


Let $\cT_{h}$ be a quasi-uniform triangulation of $\Omega$. We
assume that all $\Omega_m$ are of unit size, and that their geometries
are resolved exactly by the triangulation. Let $V_h \subset V$ be
the corresponding linear Lagrangian finite element spaces. Then the finite element discretization of \eqref{eq:model} reads: find $u_{h}\in V_h,$ such
that
$$
        a(u_{h},v)=\int_{\Omega}f v \; dx + \int_{\Gamma_N} g_N v \; ds,\qquad \forall v\in V_h.
$$
We define a linear symmetric positive definite (SPD) operator $A: V_h\to
V_h$ by
$$(Aw, v)=(w,v)_{A}=a(w,v), \qquad \forall v, w\in V_{h}.$$
and use the notation $\|\cdot\|_{A} =\sqrt{a(\cdot,\cdot)}$ to denote the energy norm. We need to solve the following operator equation,
\begin{equation}\label{eq:eq}
   Au=b,
\end{equation}
where $b$ is defined as $\langle b,v\rangle=\int_{\Omega}f v \; dx+\int_{\Gamma_N} g_N v \; ds,\;\; \forall v\in V_h.$ Given the nodal basis $\{\phi_{i}\}_{i=1}^{N}$ of $V_{h}$, let $\mathbb{A} = (a_{ij})_{N\times N}$  with $a_{ij} = a(\phi_{j}, \phi_{i})$ for $i,j = 1,\cdots, N$ be the matrix representation of $A$.

Since $A$ is SPD, by the classical PCG theory we know that the convergence rate of the iterative
method for $A$ with a preconditioner, say $B$,  is determined, by the (generalized) condition number of the preconditioned system: $\kappa(BA) := \lambda_{N}(BA) / \lambda_{1}(BA)$, where $\lambda_{i}(BA)$ for $i=1,\cdots, N$ are the eigenvalues of $BA$ satisfying $\lambda_{1} \le \lambda_{2} \le \cdots \le \lambda_{N}$. However, if we know a priori that the spectrum $\sigma(BA)$ of $BA$ satisfies $\sigma(BA)=\sigma_0(BA)\cup \sigma_1(BA)$, where
$\sigma_0(BA)=\{\lambda_1,\ldots ,\lambda_{m}\}$ contains
all extreme (``bad'') eigenvalues,
and the remaining eigenvalues contained in
$\sigma_1(BA)=\{\lambda_{m+1}, \ldots , \lambda_{N}\}$ are bounded from above and below, i.e., $\lambda_j \in [\alpha,\beta]$ for $j=m+1,\ldots, N$, then the error
at the $k$-th iteration of the PCG algorithm can be  bounded by
(cf. e.g.~\cite{Axelsson.O1994,Hackbusch.W1994,Axelsson.O2003}):
\begin{equation}
\label{eqn:CG}
        \frac{\|u-u_k\|_{A}} {\|u-u_0\|_{A}}\le 2(\kappa(BA)-1)^{m}
        \left(\frac{\sqrt{\beta/\alpha}-1}{\sqrt{\beta/\alpha}+1}\right)^{k-{m}}\;.
\end{equation}
Specifically, if the number of extreme eigenvalues $m$ is small, then the \emph{asymptotic} convergence rate of the resulting PCG method will be determined by the ratio  $(\beta/\alpha)$, which is the so-called \emph{effective condition number} (cf. \cite{Nabben.R;Vuik.C2004,Xu.J;Zhu.Y2008}).
 \begin{definition}
\label{def:effcond}
Let $T:V_{h}\to V_{h}$ be a symmetric positive definite linear
operator, with eigenvalues $0 < \lambda_{1} \le \cdots \le
\lambda_{N}$. For $m = 0, 1, \cdots N-1$, the \emph{$m$-th effective condition number} of
$T$ is defined by
    \begin{equation*}
         \kappa_{m}(T) := \frac{\lambda_{N}(T)}{\lambda_{m+1}(T)}.
         \end{equation*}
\end{definition}
\begin{remark}
\label{rk:minmax}
To estimate the effective condition number, and specifically $\lambda_{m+1}(T)$, a standard tool is the \emph{min-max principle} (see e.g.~\cite[Theorem 8.1.2]{Golub.G;Van-Loan.C1996}):
  $$
        \lambda_{m+1}(T)=\max_{\dim (S)=m}\min_{0\neq v\in S^{\perp}}\frac{(Tv,v)}{(v,v)}
  $$
In particular, for any subspace $\widetilde{V}\subset V_{h}$ with ${\rm
  dim}(\widetilde{V})=n-m$, it holds
$$
    \lambda_{m+1}(T)\ge \min_{0\neq v\in \widetilde{V}}\frac{(Tv,v)}{(v,v)}.
$$
\end{remark}

As in \cite{Xu.J;Zhu.Y2008}, we introduce a subspace $\widetilde{H}_{D}^{1}(\Omega) \subset H_{D}^{1}(\Omega)$ as:
$$\widetilde{H}_D^{1}(\Omega)=\left\{v\in H_D^{1}(\Omega): \int_{\Omega_m} v=0, \;\; m\in I\right\},$$
where $I$ is the index set of all subdomains not touching the Dirichlet boundary $\Gamma_{D}$, defined as $I:=\left\{m: {\rm meas}\left(\partial\Omega_m\cap \Gamma_D\right)=0\right\}$ where ${\rm meas}(\cdot)$ is the $d-1$ measure. The subdomains indexed by $I$, are sometimes called the \emph{floating subdomains}. Similarly, we define the finite element subspace $\widetilde{V}_{h}:=V_{h} \cap \widetilde{H}_{D}^{1}(\Omega)$. It is obvious that if $m_{0}$ is the cardinality of the index set $I$, then $\mbox{dim}(\widetilde{V}_h)=N-m_0.$ Moreover, the following Poincar\'e-Friedrichs inequality holds:
  \begin{equation}\label{eq:poincare}
    c_0 \left\|v\right\|_{0,\omega}\le \left\|\nabla v\right\|_{0,\omega},\;\; \forall v\in \widetilde{H}_D^{1}(\Omega).
  \end{equation}
We shall emphasize that $m_0$ is a fixed number which depends only on the distribution
of the coefficient $\omega$ on the domain.

We conclude this section by a discussion on the  simple (but commonly used) Jacobi and Gauss-Seidel preconditioners.
Note that the jumps in $\rho$ do not influence the condition number estimates.

\begin{theorem}[cf. Theorem 2.2 in \cite{Xu.J;Zhu.Y2008}] \label{th-gs}
Let $\mathbb{A}$ be the stiffness matrix corresponding to $a(\cdot,\cdot)$
in $V_h$, and let $\mathbb{D}$ be its diagonal.
The condition number of $\mathbb{D}^{-1} \mathbb{A}$ (Jacobi preconditioning)
depends on the mesh size and the coefficient $\omega$:
$$
\kappa (\mathbb{D}^{-1} \mathbb{A}) \lesssim h^{-2} \mathcal{J}(\omega) \,,
$$
where
\begin{equation}\label{eq:Jminmax}
\mathcal{J}(\omega) = \frac{\max_m \omega_m}{\min_m \omega_m} \,.
\end{equation}
On the other hand, the $m_{0}$-th effective condition number is independent of the
coefficients $\omega$ and $\rho$:
$$
\kappa_{m_0} (\mathbb{D}^{-1} \mathbb{A}) \lesssim h^{-2} \,.
$$
Here $m_0 = |I|$, is the number of interior subdomains $\Omega_m$.
\end{theorem}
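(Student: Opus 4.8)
The plan is to control the extreme eigenvalues of $\mathbb{D}^{-1}\mathbb{A}$ through its Rayleigh quotient. Writing $v=\sum_i v_i\phi_i\in V_h$ and recalling that the diagonal entries are $a_{ii}=a(\phi_i,\phi_i)=|\phi_i|_{1,\omega}^2+\|\phi_i\|_{0,\rho}^2$, the generalized eigenvalues of $\mathbb{A}\mathbf{v}=\lambda\mathbb{D}\mathbf{v}$ coincide with those of $\mathbb{D}^{-1}\mathbb{A}$ and are governed by
$$
\lambda \;=\; \frac{a(v,v)}{\sum_i a_{ii}v_i^2}\,.
$$
It thus suffices to (i) bound $\lambda_N=\lambda_{\max}\lesssim 1$ uniformly in $h$, $\omega$ and $\rho$, and (ii) bound the relevant smallest eigenvalue from below: $\lambda_1\gtrsim h^2/\mathcal{J}(\omega)$ for the full condition number, and $\lambda_{m_0+1}\gtrsim h^2$ with a constant independent of the coefficients for the effective one. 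Combining (i) with the two versions of (ii) then yields the two stated bounds.

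For (i) I would argue locally. Assemble $\mathbb{A}=\sum_K(\omega_K\mathbb{S}_K+\rho_K\mathbb{M}_K)$ over elements $K$, where $\mathbb{S}_K$ and $\mathbb{M}_K$ are the local stiffness and mass matrices. On a shape-regular simplex each of $\mathbb{S}_K$, $\mathbb{M}_K$ is spectrally dominated by its own diagonal with a universal constant, so $\omega_K\mathbb{S}_K+\rho_K\mathbb{M}_K\le C\,\mathrm{diag}(\omega_K\mathbb{S}_K+\rho_K\mathbb{M}_K)$; here the coefficients $\omega_K,\rho_K$ factor through each element and cannot enlarge the constant. Summing over the boundedly many elements meeting a given node gives $\mathbb{A}\le C\mathbb{D}$, i.e.\ $\lambda_{\max}(\mathbb{D}^{-1}\mathbb{A})\lesssim 1$ robustly. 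Equivalently, one may apply Gershgorin's theorem to $\mathbb{D}^{-1/2}\mathbb{A}\mathbb{D}^{-1/2}$ and verify $|a_{ij}|\lesssim\sqrt{a_{ii}a_{jj}}$ edge-by-edge.

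For (ii) the engine is the mesh-dependent norm equivalence on the quasi-uniform triangulation, namely $\sum_i a_{ii}v_i^2\simeq h^{-2}\|v\|_{0,\omega}^2+\|v\|_{0,\rho}^2$, together with the appropriate Poincar\'e--Friedrichs inequality. For the full condition number I would bound the diffusion part by $h^{-2}\|v\|_{0,\omega}^2\le(\max_m\omega_m)h^{-2}\|v\|_{L^2}^2$ and invoke the \emph{unweighted} Poincar\'e inequality on $H_D^1(\Omega)$ together with $|v|_{1,\omega}^2\ge(\min_m\omega_m)|v|_{H^1}^2$; this produces exactly the factor $\mathcal{J}(\omega)=\max_m\omega_m/\min_m\omega_m$, while the reaction part matches $\|v\|_{0,\rho}^2\le a(v,v)$ with \emph{no} jump factor, so $\rho$ cancels and $\lambda_1\gtrsim h^2/\mathcal{J}(\omega)$. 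For the effective condition number I would instead restrict to $\widetilde{V}_h\subset\widetilde{H}_D^1(\Omega)$, where the \emph{weighted} Poincar\'e--Friedrichs inequality \eqref{eq:poincare} gives $\|v\|_{0,\omega}\le c_0^{-1}|v|_{1,\omega}$ directly; hence $\sum_i a_{ii}v_i^2\lesssim h^{-2}(|v|_{1,\omega}^2+\|v\|_{0,\rho}^2)=h^{-2}a(v,v)$ with a coefficient-free constant. Since $\dim\widetilde{V}_h=N-m_0$, the min-max principle of Remark~\ref{rk:minmax} yields $\lambda_{m_0+1}(\mathbb{D}^{-1}\mathbb{A})\ge\min_{0\neq v\in\widetilde{V}_h}a(v,v)\big/\sum_i a_{ii}v_i^2\gtrsim h^2$, independent of $\omega$ and $\rho$, whence $\kappa_{m_0}\lesssim h^{-2}$.

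The main obstacle I anticipate is the careful treatment of the \emph{interface nodes} in the norm equivalence $\sum_i a_{ii}v_i^2\simeq h^{-2}\|v\|_{0,\omega}^2+\|v\|_{0,\rho}^2$. At a node lying on a jump interface the coefficient is multivalued, so $a_{ii}$ is a sum of element contributions carrying different $\omega_K$ and $\rho_K$; one must choose a consistent nodal representative (e.g.\ the sum over adjacent elements) and check that the equivalence constants stay independent of the coefficients. The delicate point is to keep the diffusion and reaction contributions \emph{decoupled}, so that the weighted mass term in the denominator is matched cleanly to $\|v\|_{0,\rho}^2$ without cross terms; this decoupling is precisely what forces $\rho$ to drop out of both estimates and confirms the claim that jumps in $\rho$ do not affect the condition number.
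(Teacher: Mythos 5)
Your proposal is correct and follows essentially the same route as the paper's proof: you identify the Jacobi diagonal with the mesh-dependent weighted norm $h^{-2}\|v\|_{0,\omega}^2+\|v\|_{0,\rho}^2$ (the paper packages this as $h^{-2}\|v\|_{0,\omega_h}^2$ with $\omega_h=\omega+h^2\rho$), obtain the full lower bound from the unweighted Poincar\'e inequality at the cost of the factor $\mathcal{J}(\omega)$, and obtain the coefficient-independent bound on $\lambda_{m_0+1}$ from the weighted Poincar\'e--Friedrichs inequality \eqref{eq:poincare} on $\widetilde{V}_h$ together with the min-max principle of Remark~\ref{rk:minmax}. The only (harmless) deviation is your proof of $\lambda_{\max}(\mathbb{D}^{-1}\mathbb{A})\lesssim 1$ via local element-matrix diagonal dominance/Gershgorin rather than the paper's inverse inequality $a(v,v)\lesssim h^{-2}\|v\|_{0,\omega_h}^2$; both are standard and equally robust to the jumps.
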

\begin{proof}
For ease of presentation, we introduce a mesh dependent coefficient defined as
$$
        \omega_{h} = \omega + h^{2}\rho.
$$
Note that when $\rho = 0$, we have $\omega_h = \omega$ as in \cite{Xu.J;Zhu.Y2008}.
Given any $v_{h} \in V_h$, let $\mathbf{v}$ be its vector representation in the nodal basis
of $V_h$.

First of all, by inverse inequality
$$
\mathbf{v}^{t} \mathbb{A} \mathbf{v} = a(v_{h},v_{h}) \lesssim h^{-2} \|v_{h}\|^2_{0,\omega_h}
\simeq \mathbf{v}^{t} \mathbb{D}\, \mathbf{v} \,.
$$
This inequality implies that   $\lambda_{\max} (\mathbb{D}^{-1}\mathbb{A}) \lesssim 1.$
On the other hand, by Poincar\'e-Friedrichs inequality, we have
$$
        \|v_{h}\|^{2}_{0, \omega} \le \max_{m}\omega_{m} \|v_{h}\|^{2}_{L^{2}(\Omega)} \lesssim \max_{m}\omega_{m} \|\nabla v_{h} \|^{2}_{L^{2}(\Omega)} \lesssim \cJ(\omega)|v_{h}|^{2}_{1,\omega}.
$$
Since $ h^2 \lesssim 1$, we obtain
$$
a(v_{h},v_{h}) \gtrsim \mathcal{J}(\omega)^{-1} \|v_{h}\|^2_{0,\omega} + \|v_{h}\|^2_{0,\rho}
\gtrsim \mathcal{J}(\omega)^{-1} \|v_{h}\|^2_{0,\omega_h} \,.
$$
which implies $ h^{2} \mathcal{J}(\omega)^{-1} \mathbf{v}^{t} \mathbb{D}\, \mathbf{v}
\lesssim \mathbf{v}^{t} \mathbb{A}\, \mathbf{v}$. Thus the minimum eigenvalue of $\mathbb{D}^{-1} \mathbb{A}$ is bounded by  $\lambda_{\min} (\mathbb{D}^{-1} \mathbb{A}) \gtrsim h^{2} \cJ^{-1}(\omega).$ This proves  $\kappa(\mathbb{D}^{-1} \mathbb{A})\lesssim h^{-2} \cJ(\omega).$

Finally, when $v_{h} \in \widetilde{V}_h$ the Poincar\'e-Friedrichs inequality \eqref{eq:poincare} implies
$$
        \|v_{h}\|^2_{0,\omega_h} \lesssim a(v_{h},v_{h}) \,,
$$
and therefore $h^{2}\, \mathbf{v}^{t} \mathbb{D}\, \mathbf{v} \lesssim
\mathbf{v}^{t} \mathbb{A}\, \mathbf{v}$. Then by the min-max principle (cf. Remark~\ref{rk:minmax}), we obtain that
$\lambda_{m_{0}+1}(\mathbb{D}^{-1} \mathbb{A}) \gtrsim h^{-2},$
since ${\rm dim}(\widetilde{V}_{h}) = {\rm dim}(V_{h}) -m_{0}.$ Thus we obtain the desired estimate for the $m_{0}$-th effective condition number $\kappa_{m_{0}}(\mathbb{D}^{-1} \mathbb{A})$.
\end{proof}

\begin{remark}
Analogous results hold for symmetric Gauss-Seidel preconditioner based on certain spectral equivalence between Jacobi and Gauss-Seidel iterations for SPD matrices (see \cite{Vassilevski.P2008} for more details).
\end{remark}

\section{Interpolation operator} \label{sec-interp}
The analysis of the multilevel preconditioner relies on the approximation and stability of certain interpolation operator. In this section we describe the dual basis-based interpolation
operator from \cite{Scott.R;Zhang.S1990}, and show how it can be used to derive
simultaneous estimates in two different weighted inner products.

Let $T \in \cT_h$ be a fixed mesh element and $\{\lambda_{T,i}\}$ be the set of
its linear finite element shape functions. The local mass matrix on $T$ has entries
$$
(\mathbb{M}_T)_{ij} = \int_T \lambda_{T,j}\, \lambda_{T,i}  dx\,,
$$
and it is easy to check that $\mathbb{M}_T$ is spectrally equivalent to a diagonal matrix:
$\mathbb{M}_T \eqsim h^d \mathbb{I}_T$.
Let $\{\mu_{T,i}\}$ be the dual basis of $\{\lambda_{T,i}\}$, i.e.
$\mu_{T,i} = \sum_j \alpha_{ij} \lambda_{T,j}$ is such that
$$
\int_T \lambda_{T,j}\, \mu_{T,i} dx= \delta_{ij}\,.
$$
Then
\begin{equation} \label{eq-mu2}
\int_T \mu^2_{T,i} dx= \alpha_{ii} = (\mathbb{M}_T^{-1})_{ii} \eqsim h^{-d} \,.
\end{equation}

Given $v \in L^{2}(\Omega)$ we define $\Pi_h v \in V_h$ by specifying its values
in the vertices of $\cT_h$. Specifically, the value at a vertex $x$ is determined
using an associated element $T_x \in \cT_h$:
\begin{equation}\label{eqn:pih}
(\Pi_h v)(x) = \int_{T_x} v\, \mu_{x} dx\,,
\end{equation}
and $(\Pi_h v)(x) = 0$ if $x\in \Gamma_{D}$.
Here $\mu_{x}:=\mu_{T_{x},i}$ is the dual basis at $x$ in $T_x$, where $i$ is the index of $x$ as a vertex of $T_{x}$.

\begin{remark}
Let $Q_{T_x}$ be the local $L^{2}$-projection on $T_x$, i.e. $Q_{T_x}v$ is the
unique linear combination of $\{\lambda_{T_x,i}\}$ which satisfies
$$
(Q_{T_x} v, w)_{L^{2}(T_x)} = (v, w)_{L^{2}(T_x)}
$$
for any $w \in \text{span}\,\{\lambda_{T_x,i}\}$.
Then $\Pi_h$ can be equivalently defined by
$$
(\Pi_h v)(x) = (Q_{T_x} v)(x).
$$
\end{remark}

We remark that the choice of $T_{x}$ is not unique. Given a particular ordering of the subdomains, say $\Omega_{1}, \cdots, \Omega_{M}$, we choose $T_{x} \subset \Omega_{k}$ where $k$ is the minimal index of all the subdomains that contain $x$. Note that this ordering has nothing to do with the actual geometry distribution of the coefficients.
In order to make $\Pi_{h}$ satisfy certain stability property in the weighted norms, we may label the subdomains such that
$
        \omega_{1} \ge \omega_{2} \ge \cdots \ge \omega_{M},
$
or
$ \rho_{1}\ge \rho_{2} \ge \cdots \ge \rho_{M}.$
In this case, the choice of $T_{x}$ guarantees that the coefficient in $T_{x}$ is the maximum of all the coefficients in the neighborhood of $x$. By a standard argument, we have the following result on $\Pi_{h}$.

\begin{lemma}
\label{lm:l2stable}
The interpolation operator $\Pi_h: L^{2}(\Omega) \to V_{h}$ defined above satisfies that
\begin{eqnarray}
\|\Pi_h v\|_{L^{2}(\Omega_m)}^2 &\lesssim& \sum_{k \leq m} \|v\|_{L^{2}(\Omega_k)}^2, \; \forall v\in L^{2}(\Omega)\label{eq-pih-l2stable}
\end{eqnarray}
\end{lemma}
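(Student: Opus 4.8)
The plan is to establish the local stability estimate \eqref{eq-pih-l2stable} element-by-element, then sum over elements in $\Omega_m$. The key structural fact I want to exploit is the labeling convention for $T_x$: by construction, for any vertex $x$ the associated element $T_x$ lies in the subdomain $\Omega_k$ of \emph{minimal index} among all subdomains containing $x$. Since the subdomains are ordered so that the coefficient is decreasing in the index, this means the coefficient on $T_x$ dominates the coefficients of all neighboring elements sharing the vertex $x$. The upshot for this particular lemma (which is stated in the unweighted $L^2$ norm) is simpler: it guarantees that whenever a vertex $x$ of an element $T \subset \Omega_m$ has its associated element $T_x \subset \Omega_k$, we must have $k \le m$, because $\Omega_m$ is one of the subdomains containing $x$ and $k$ is the minimal such index.

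\smallskip

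\noindent First I would fix an element $T \subset \Omega_m$ and estimate $\|\Pi_h v\|_{L^2(T)}^2$. On $T$, the function $\Pi_h v$ is the linear interpolant of the nodal values $(\Pi_h v)(x) = \int_{T_x} v\,\mu_x\,dx$ taken over the $d+1$ vertices $x$ of $T$. Using the spectral equivalence $\mathbb{M}_T \eqsim h^d \mathbb{I}_T$, the $L^2$ norm of a linear function on $T$ is comparable to $h^d$ times the sum of squares of its nodal values, so
$$
\|\Pi_h v\|_{L^2(T)}^2 \lesssim h^d \sum_{x \in T} \left| \int_{T_x} v\, \mu_x \, dx \right|^2.
$$
Next I would bound each nodal value by Cauchy--Schwarz: $\left|\int_{T_x} v\,\mu_x\,dx\right|^2 \le \|v\|_{L^2(T_x)}^2 \, \|\mu_x\|_{L^2(T_x)}^2$, and then invoke \eqref{eq-mu2}, which gives $\|\mu_x\|_{L^2(T_x)}^2 = \int_{T_x}\mu_x^2\,dx \eqsim h^{-d}$. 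The two powers of $h$ cancel, leaving $\|\Pi_h v\|_{L^2(T)}^2 \lesssim \sum_{x \in T} \|v\|_{L^2(T_x)}^2$, with each $T_x \subset \Omega_{k(x)}$ for some index $k(x) \le m$.

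\smallskip

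\noindent The final step is to sum over all elements $T \subset \Omega_m$. Each term on the right is $\|v\|_{L^2(T_x)}^2$ for an element $T_x$ lying in some $\Omega_k$ with $k \le m$. Because the triangulation is shape-regular and quasi-uniform, each element $T_x$ can be the associated element of only a bounded (mesh-independent) number of vertices, so the summation produces only a bounded overlap factor. Collecting all contributions and grouping them by the subdomain containing $T_x$ yields
$$
\|\Pi_h v\|_{L^2(\Omega_m)}^2 \lesssim \sum_{k \le m} \|v\|_{L^2(\Omega_k)}^2,
$$
which is exactly \eqref{eq-pih-l2stable}.

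\smallskip

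\noindent \textbf{Main obstacle.} The one point demanding care is the bookkeeping in the summation step: I must verify that the elements $T_x$ appearing as I range over all vertices of all $T \subset \Omega_m$ never carry an index exceeding $m$, and that the overlap in the sum is uniformly bounded. The index control is precisely what the minimal-index labeling convention for $T_x$ delivers --- since $\Omega_m$ itself contains every vertex $x$ of $T$, the minimal index $k(x)$ of a subdomain containing $x$ satisfies $k(x) \le m$. The bounded overlap follows from shape-regularity, since the number of elements meeting a given vertex, and hence the number of vertices for which a fixed element serves as $T_x$, is controlled independently of $h$. Everything else is the standard scaling argument encoded in $\mathbb{M}_T \eqsim h^d \mathbb{I}_T$ and \eqref{eq-mu2}.
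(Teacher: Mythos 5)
Your proof is correct and follows essentially the same route as the paper's: a local scaling estimate on each element $T \subset \Omega_m$ (nodal values via the dual basis, Cauchy--Schwarz, and \eqref{eq-mu2} to cancel the powers of $h$), combined with the minimal-index convention for $T_x$ to guarantee $k(x) \le m$, then summation over the elements of $\Omega_m$. The only cosmetic differences are that you phrase the local bound through the mass-matrix equivalence $\mathbb{M}_T \eqsim h^d \mathbb{I}_T$ on squared norms where the paper uses the triangle inequality on the nodal basis expansion, and that you make explicit the bounded-overlap count that the paper leaves implicit in its final summation step.
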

\begin{proof}
        For each element $T\subset \Omega_{m}$, we estimate $\|\Pi_{h} v\|_{L^{2}(T)}$ as follows:
        \begin{eqnarray*}
                \|\Pi_{h}v\|_{L^{2}(T)} &\le& \sum_{i=1}^{d+1} |\Pi_{h}v(x_{i})| \|\phi_{i}\|_{L^{2}(T)}
                \lesssim h^{d/2} \sum_{i=1}^{d+1}\int_{T_{x_{i}}}\mu_{x_{i}} v dx\\
                &\lesssim& h^{d/2} \sum_{i=1}^{d+1}\|\mu_{x_{i}}\|_{L^{2}(T_{x_{i}})} \|v \|_{L^{2}(T_{x_{i}})}
                \lesssim \|v\|_{L^{2}(S_T)},
        \end{eqnarray*}
        where $S_{T} = \bigcup\{T_{x_{i}} \in \cT_{h}: T_{x_{i}} \mbox{ is the element associated with the vertex } x_{i}\}.$ In the last step, we used the property \eqref{eq-mu2} on $\mu_{x_{i}}$. Notice that $T_{x_{i}} \subset \Omega_{k}$ where $k$ is the minimal index of all the subdomains that intersect at $x_{i}$. Therefore, the $L^{2}$ stability of $\Pi_{h}$  \eqref{eq-pih-l2stable} follows by summing up all the elements in $\Omega_{m}$ on both sides.
%
\end{proof}
Based on this lemma, we have the following corollary on the stability of $\Pi_{h}$ in the weighted $L^{2}$ norms.
\begin{corollary}
\label{cor:wl2stable}
Assume that $\Pi_h$ was defined as in \eqref{eqn:pih}, with the choice of $T_{x} \subset \Omega_{k}$, where $k$ is the minimal index of all the subdomains that intersect at $x$.
\begin{enumerate}
        \item $\Pi_h$ is stable in the standard $L^{2}$ norm:
\begin{equation}\label{eq:pih-l2stable}
        \|\Pi_h v\|_{L^{2}(\Omega)} \lesssim \|v\|_{L^{2}(\Omega)},\, \quad\forall v\in L^{2}(\Omega).
\end{equation}
        \item For any
piecewise-constant coefficient satisfying $\tau_1 \geq \tau_2 \geq \cdots \geq \tau_M$, $\Pi_{h}$ is stable in the $\tau$-weighted $L^{2}$ norm:
\begin{equation}\label{eq:pih-wl2stable}
        \|\Pi_h v\|_{0,\tau} \lesssim \|v\|_{0,\tau} \,, \quad\forall v\in L^{2}(\Omega).
\end{equation}
        \item In the worst scenario, for any piecewise-constant coefficient $\tau> 0$, we have
\begin{equation}\label{eq:pih-wl2stable-worsest}
        \|\Pi_h v\|_{0,\tau}^2 \lesssim \cJ(\tau)\|v\|_{0,\tau}^2 \,,\quad\forall v\in L^{2}(\Omega),
\end{equation}
where $\cJ(\tau)$ is the measure of the variation of $\tau$ defined by \eqref{eq:Jminmax}.
\end{enumerate}
\end{corollary}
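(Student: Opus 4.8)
The plan is to derive all three estimates directly from the per-subdomain bound in Lemma~\ref{lm:l2stable} by summing over the subdomains with the appropriate weights and interchanging the order of summation. The one structural fact I will lean on throughout is that $M$, the number of material subdomains, is a fixed geometric constant, independent of both the mesh size $h$ and the magnitudes of $\omega$ and $\rho$; any factor of $M$ may therefore be absorbed into the hidden constant in $\lesssim$.

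For the first two estimates I would first record the basic interchange. Weighting \eqref{eq-pih-l2stable} by $\tau_m$ and summing gives
\[
\|\Pi_h v\|_{0,\tau}^2 = \sum_{m=1}^M \tau_m \|\Pi_h v\|_{L^2(\Omega_m)}^2 \lesssim \sum_{m=1}^M \tau_m \sum_{k\le m} \|v\|_{L^2(\Omega_k)}^2 = \sum_{k=1}^M \Big(\sum_{m\ge k}\tau_m\Big) \|v\|_{L^2(\Omega_k)}^2 .
\]
The whole argument now reduces to estimating the inner weight $\sum_{m\ge k}\tau_m$ against $\tau_k$. For the standard $L^2$ bound \eqref{eq:pih-l2stable} I take $\tau\equiv 1$, so that $\sum_{m\ge k}\tau_m = M-k+1\le M$ and the claim follows at once. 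For the weighted bound \eqref{eq:pih-wl2stable} I invoke the hypothesis $\tau_1\ge\cdots\ge\tau_M$: since $\tau_m\le\tau_k$ whenever $m\ge k$, we get $\sum_{m\ge k}\tau_m\le (M-k+1)\tau_k\le M\tau_k$, and summing over $k$ returns $M\|v\|_{0,\tau}^2$.

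For the worst-case estimate \eqref{eq:pih-wl2stable-worsest} the coefficient $\tau$ is no longer assumed aligned with the subdomain ordering, so monotonicity is unavailable. I would instead bound crudely $\sum_{m\ge k}\tau_m\le M\max_j\tau_j$ and then reintroduce $\tau_k$ by writing $\max_j\tau_j = (\max_j\tau_j/\tau_k)\,\tau_k\le \cJ(\tau)\,\tau_k$, using $\tau_k\ge\min_j\tau_j$. Substituting into the displayed interchange yields $\|\Pi_h v\|_{0,\tau}^2\lesssim \cJ(\tau)\sum_k\tau_k\|v\|_{L^2(\Omega_k)}^2 = \cJ(\tau)\|v\|_{0,\tau}^2$.

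The only genuinely delicate point --- and the reason the three cases differ --- is the interplay between the labeling of the subdomains and the monotonicity of $\tau$. The definition of $\Pi_h$ fixes, for each vertex $x$, the element $T_x$ inside the subdomain of \emph{minimal} index meeting $x$; Lemma~\ref{lm:l2stable} then charges the mass of $\Pi_h v$ on $\Omega_m$ only to subdomains of index $k\le m$. This one-directional flow of mass is precisely what makes the interchange telescope favorably when $\tau$ decreases in the index (estimate (2)); when this alignment is destroyed, the mismatch can be as large as the global ratio $\cJ(\tau)$, which is the content of (3). I expect no computational obstacle beyond this bookkeeping, since everything else is a finite sum controlled by the fixed number $M$.
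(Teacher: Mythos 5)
Your proof is correct and follows exactly the route the paper intends: the corollary is stated as a direct consequence of Lemma~\ref{lm:l2stable}, and your weighted summation over subdomains, interchange of the double sum, and use of the monotonicity $\tau_m\le\tau_k$ for $m\ge k$ (respectively the crude bound $\max_j\tau_j\le\cJ(\tau)\,\tau_k$ in the unordered case) is precisely the bookkeeping the authors leave implicit, with the factor $M$ absorbed into $\lesssim$ since the number of subdomains is a fixed geometric constant.
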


\begin{remark}
Other projection operators that are stable in both the $L^{2}$ and weighted $L^{2}$ norms
are also available. For example, let
\begin{equation*}
\rho_x = \sum\limits_{T :\; x\in T}\rho_{T}
\end{equation*}
and define
\begin{equation*}
\left (\Pi_h v\right )(x) = \sum\limits_{T:\; x\in T}\frac{\rho_T}{\rho_x}\;
\left (Q_T v \right )(x).
\end{equation*}
Since $\rho_T \leq \rho_x$ implies $(\Pi_h v)^2(x) \leq  \sum\limits_{T:\; x\in T} (Q_T v)^2(x)$, it is
clear that $\Pi_h$ is stable in the $L^{2}$ norm.
The fact that it is also stable in the $\rho$-weighted $L^{2}$ norm follows from
$$
\rho_x (\Pi_h v)^2(x)
\leq \left( \sum\limits_{T:\; x\in T}\sqrt{\rho_T}\; \left (Q_T v \right )(x) \right)^2
\leq  \sum\limits_{T:\; x\in T} \rho_T (Q_T v)^2(x) \,.
$$
\end{remark}


Now we turn to study the approximation and stability properties in the $\omega$-weighted norms. It is standard (cf. \cite{Scott.R;Zhang.S1990}) that the $\Pi_{h}:H_{D}^{1}(\Omega) \to V_{h}$ has the following classical approximation and stability estimates:
\begin{equation*} 
\|v-\Pi_h v\|_{L^{2}(\Omega)} \lesssim h |v|_{H^1(\Omega)}, \,\qquad
|\Pi_h v|_{H^1(\Omega)} \lesssim |v|_{H^1(\Omega)} \,.
\end{equation*}
In the $\omega$-weighted norms, we have the following approximation and stability estimates.
\begin{lemma}
\label{lm:jh1stable}
        The interpolation $\Pi_{h}: H_{D}^{1}(\Omega) \to V_{h}$ satisfies the following approximation and stability estimates:
        \begin{equation} \label{eq-pih-japprox}
\|v-\Pi_h v\|^{2}_{0,\omega} \lesssim  \cJ(\omega) h |v|^{2}_{1,\omega} \,,
\end{equation}
\begin{equation} \label{eq-pih-h1-jstability}
|\Pi_h v|^{2}_{1,\omega} \lesssim \cJ(\omega) |v|^{2}_{1,\omega} \,.
\end{equation}
Moreover, if $\omega_1 \geq \omega_2 \geq \cdots \geq \omega_M$ and
$v \in \widetilde{H}_D^{1} (\Omega)$, we have the following estimates:
\begin{equation} \label{eq-pih-omega-approx}
\|v-\Pi_h v\|_{0,\omega} \lesssim h |\log h|^{\frac{1}{2}} |v|_{1,\omega} \,,
\end{equation}
\begin{equation} \label{eq-pih-h1-omega-stability}
|\Pi_h v|_{1,\omega} \lesssim |\log h|^{\frac{1}{2}} |v|_{1,\omega} \,.
\end{equation}
\end{lemma}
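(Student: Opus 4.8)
The plan is to establish the two pairs of estimates by entirely different means. The coefficient-ratio bounds \eqref{eq-pih-japprox} and \eqref{eq-pih-h1-jstability} hold without any structural hypothesis and follow simply by pulling the weights out of the sums: bounding every $\omega_m$ from above by $\max_k\omega_k$ turns the weighted norms into the unweighted ones, to which the classical Scott--Zhang estimates recalled just above apply directly, and bounding $|v|_{H^1(\Omega)}^2=\sum_m|v|_{H^1(\Omega_m)}^2$ from below by $(\min_k\omega_k)^{-1}|v|_{1,\omega}^2$ produces the factor $\cJ(\omega)$. For \eqref{eq-pih-japprox} this route in fact yields $h^2$ in place of $h$, which is more than enough since $h\lesssim 1$.

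The substance of the lemma is in \eqref{eq-pih-omega-approx}--\eqref{eq-pih-h1-omega-stability}, where the aim is to trade the possibly large factor $\cJ(\omega)$ for the mild $|\log h|$. Here I would localize the only non-local feature of $\Pi_h$, its behaviour across interfaces. On each $\Omega_m$ I write $\Pi_h v=\Pi_h^{m}v+\theta_m$, where $\Pi_h^{m}$ is the companion operator that always selects $T_x\subset\Omega_m$. Then $\Pi_h^{m}$ is a genuine Scott--Zhang operator for $\Omega_m$ and obeys $\|v-\Pi_h^{m}v\|_{L^2(\Omega_m)}\lesssim h\,|v|_{H^1(\Omega_m)}$ and $|\Pi_h^{m}v|_{H^1(\Omega_m)}\lesssim|v|_{H^1(\Omega_m)}$; weighting these by $\omega_m$ is harmless, since $\omega_m|v|_{H^1(\Omega_m)}^2\le|v|_{1,\omega}^2$. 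The correction $\theta_m:=\Pi_h v-\Pi_h^{m}v$ vanishes at every interior vertex and at every interface vertex whose neighbour carries a higher index; by the ordering $\omega_1\ge\cdots\ge\omega_M$, the surviving vertices lie on interfaces with neighbours of index $k<m$, hence $\omega_k\ge\omega_m$, and $\theta_m$ is supported in the one-element-wide layer of $\Omega_m$ along these interfaces.

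The whole difficulty is to estimate this boundary correction. Its nodal value at such an interface vertex $x$ is the mismatch $(Q_{T_x^{(k)}}v)(x)-(Q_{T_x^{(m)}}v)(x)$ between the local $L^2$-projections of $v$ from the two sides of the interface, and the task is to control $\omega_m|\theta_m|_{H^1(\Omega_m)}^2$ for \eqref{eq-pih-h1-omega-stability}, and $\omega_m\|\theta_m\|_{L^2(\Omega_m)}^2$ for \eqref{eq-pih-omega-approx}, by $|v|_{1,\omega}^2$. I would bound $\theta_m$ through a discrete trace/Sobolev inequality for finite element functions restricted to the interface layer; in two dimensions it is precisely this estimate, of the form $\|u_h\|_{L^\infty(\Omega_k)}^2\lesssim|\log h|\,(|u_h|_{H^1(\Omega_k)}^2+\|u_h\|_{L^2(\Omega_k)}^2)$ applied on the higher-coefficient side $\Omega_k$, that is responsible for the factor $|\log h|$. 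The floating hypothesis $v\in\widetilde{H}_D^{1}(\Omega)$ enters exactly here: on each floating $\Omega_k$ the Poincar\'e--Friedrichs inequality \eqref{eq:poincare} removes the mean and absorbs the $L^2$ term into $|v|_{H^1(\Omega_k)}$. Crucially, each such neighbour contribution is multiplied by $\omega_m\le\omega_k$, hence dominated by $\omega_k|v|_{H^1(\Omega_k)}^2\le|v|_{1,\omega}^2$.

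Summing over the $O(1)$ interfaces meeting each subdomain and then over all $m$ — using that, for fixed $k$, the layer pieces contributed to its lower-coefficient neighbours overlap with bounded multiplicity inside $\Omega_k$ — then yields \eqref{eq-pih-h1-omega-stability}; the same bookkeeping applied to $\|\theta_m\|_{L^2(\Omega_m)}$, which carries one extra power of $h$, gives \eqref{eq-pih-omega-approx}. The main obstacle, and the real crux, is this interface-layer estimate, where two things must happen simultaneously: the weight must land on the lower-coefficient side of each interface, which is guaranteed by the ordering together with the minimal-index choice of $T_x$ forcing $\omega_m\le\omega_k$; and the boundary values of finite element functions must be controlled, which is where the two-dimensional $|\log h|$ enters. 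Away from the interfaces everything is local and the argument is entirely standard.
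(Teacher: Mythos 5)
Your treatment of \eqref{eq-pih-japprox}--\eqref{eq-pih-h1-jstability} is correct: pulling the weights out by $\max_k\omega_k$ and $\min_k\omega_k$ and invoking the unweighted Scott--Zhang estimates recorded just before the lemma gives the claimed bounds (indeed with $h^2$ in place of $h$), and these two estimates require neither the ordering of the subdomains nor the floating hypothesis.

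For \eqref{eq-pih-omega-approx}--\eqref{eq-pih-h1-omega-stability}, however, there is a genuine gap, and it sits exactly where you yourself place ``the real crux.'' First, the tool you name for the interface-layer estimate --- the discrete Sobolev inequality $\|u_h\|_{L^\infty}^2\lesssim|\log h|\,\|u_h\|_{H^1}^2$ --- is a strictly two-dimensional fact; in 3D the sharp factor is $h^{-1}$, not $|\log h|$, so the argument as described yields nothing useful for $d=3$, which the lemma (and the whole paper) covers. In the Bramble--Xu theory the three-dimensional log factor arises from a different mechanism (capacity of the interface \emph{edges}), none of which appears in your sketch. Second, even in 2D the decisive step is asserted rather than proved: the quantity to control is the mismatch $(Q_{T_x^{(k)}}v)(x)-(Q_{T_x^{(m)}}v)(x)$ of local averages of the \emph{Sobolev} function $v$, whereas your inequality applies to finite element functions $u_h$; the bridge between the two is missing. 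Moreover, the bookkeeping ``each neighbour contribution is multiplied by $\omega_m\le\omega_k$'' tacitly assumes the mismatch at a surviving vertex can be estimated using $v$ only on $\Omega_m$ and its higher-coefficient neighbour. That is true when the two elements can be chained through $\Omega_k\cup\Omega_m$ (in which case a patchwise Poincar\'e argument gives a bound with \emph{no} log at all), but it fails at cross points, where a third subdomain $\Omega_j$ with $\omega_j\ll\omega_m$ enters the vertex patch and $T_x^{(k)}$, $T_x^{(m)}$ may touch only at the point $x$: the chaining must then pass through low-coefficient elements and the weight transfer collapses. Those cross points (edges, in 3D) are precisely where the log factor lives and why the estimates are sharp, so omitting them omits the theorem. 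The paper avoids this entire difficulty: since $Q_h^\omega v\in V_h$, one has $v-\Pi_h v=(I-\Pi_h)(v-Q_h^\omega v)$, so Lemma~\ref{lm:l2stable} together with the ordering $\omega_1\ge\cdots\ge\omega_M$ (and, for \eqref{eq-pih-h1-omega-stability}, an inverse inequality) reduces both estimates to the known bounds for the $\omega$-weighted $L^2$-projection $Q_h^\omega$ of Bramble--Xu and Xu--Zhu, valid in both 2D and 3D, with the Poincar\'e--Friedrichs inequality \eqref{eq:poincare} converting $\|v\|_{1,\omega}$ into $|v|_{1,\omega}$ on $\widetilde{H}_D^1(\Omega)$. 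Carrying out your plan honestly would amount to re-proving that cited theory; the clean repair is to perform the same reduction.
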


\begin{proof}
Below, we give the proof of \eqref{eq-pih-omega-approx}-\eqref{eq-pih-h1-omega-stability}. The proof of the estimates \eqref{eq-pih-japprox}-\eqref{eq-pih-h1-jstability} is similar with minor changes.

Let $Q^\omega_h : H_{D}^{1}(\Omega) \to V_{h}$ be the $\omega$-weighted $L^{2}$-projection (see \cite{Bramble.J;Xu.J1991,Xu.J;Zhu.Y2008}). It satisfies
  $$\left\|v-Q_h^\omega v\right\|_{0,{\omega}}\lesssim h
  \left|\log h\right|^{\frac{1}{2}} \left\|v\right\|_{1,{\omega}}, \quad \forall v\in H_D^1(\Omega).$$
Then by triangle inequality, we have on each subdomain $\Omega_{m}$:
\begin{eqnarray*}
        \|v- \Pi_{h}v\|^2_{L^{2}(\Omega_{m})} &\lesssim& \|v - Q_h^\omega v\|^2_{L^{2}(\Omega_{m})} + \|\Pi_{h}(v - Q_h^\omega v)\|^2_{L^{2}(\Omega_{m})}\\
        &\lesssim& \sum_{k\le m} \|v - Q_h^\omega v\|^2_{L^{2}(\Omega_{k})},
\end{eqnarray*}
where in the last step we used Lemma~\ref{lm:l2stable} for the stability of $\Pi_{h}$ on the subdomain $\Omega_{m}$.
Therefore, we have
$$
        \|v - \Pi_{h}v\|_{0,\omega} \le \|v -  Q_h^\omega v\|_{0,\omega} \lesssim h
  \left|\log h\right|^{\frac{1}{2}} \left\|v\right\|_{1,{\omega}}.
$$
The inequality \eqref{eq-pih-omega-approx} then follows by the Poincar\'e-Friedrichs inequality on $\widetilde{H}_{D}^{1}(\Omega)$.

Similarly, to show the weighted $H^{1}$ stability \eqref{eq-pih-h1-omega-stability}, we have on each subdomain $\Omega_{m}$:
\begin{eqnarray*}
        |\Pi_{h}v|^2_{H^{1}(\Omega_{m})} &\lesssim& |Q_{h}^{\omega} v|^2_{H^{1}(\Omega_{m})} + | \Pi_{h}(v - Q_h^\omega v)|^2_{H^{1}(\Omega_{m})}\\
        &\lesssim& |Q_{h}^{\omega} v|^2_{H^{1}(\Omega_{m})} + h^{-2} \| \Pi_{h}(v - Q_h^\omega v)\|^2_{L^{2}(\Omega_{m})}\\
        &\lesssim&  |Q_{h}^{\omega} v|^2_{H^{1}(\Omega_{m})} + h^{-2} \sum_{k\le m}\|(v - Q_h^\omega v)\|^2_{L^{2}(\Omega_{k})}
\end{eqnarray*}
Then, the inequality \eqref{eq-pih-h1-omega-stability}
follows from the approximation and stability estimates  of $Q_{h}^{\omega}$ (see for example  \cite[Lemma 3.3]{Xu.J;Zhu.Y2008}). This completes the proof.
\end{proof}

\section{Multilevel Preconditioners}
\label{sec:multilevel}
In this section, we present the BPX and multigrid V-cycle preconditioners based on the subspace correction methods \cite{Xu.J1992a,Xu.J;Zikatanov.L2002}. We present the main results of the robustness of these preconditioners with respect to the jump in the coefficients.

Let $\cT_{0}$ be an initial conforming mesh which resolves the jump interfaces. We obtain a nested sequence of triangulation $\{\cT_{k}\}_{k=0}^{L}$  by a uniform refinement. Let $h_{k}$ be the mesh size of $\cT_{k}$ for $k =0 ,\cdots, L$, then we  have $h_{k} \simeq \gamma^{k} h_{0}$ for some $\gamma \in (0, 1)$, and $L \simeq |\log h_{L}|.$ For simplicity, we denote $h_{L} = h$. On each triangulation $\cT_{k}$, let $V_{k}$ be the corresponding finite element space over $\cT_{k}$. Then we obtain a sequence of nested spaces:
$$
        V_{0}\subset V_{1} \subset \cdots \subset V_{L} = V_{h}.
$$
These spaces defines a natural decomposition of $V_{h}$ as
$
        V_{h}= \displaystyle\sum_{k=0}^{L} V_{k}.
$
At each level $k =0, 1, \cdots, L$, we define the operator $A_k: V_k\to V_k$ by
$$(A_k w_k, v_k)=a(w_k,v_k),\;\; \forall w_k, v_k\in V_k$$
and simply denote $A =A_{L}$.  A key ingredient in analyzing the multilevel preconditioners is the stable decomposition derived below.
\subsection{Stable Decomposition}\label{sec-decomp}
With the help of the properties of the interpolation operator $\Pi_{h}$, we now show several stable results of the subspace decomposition described above. In the multilevel context, we will use the notation $\Pi_{k}:=\Pi_{h_{k}}$. Also, we notice that
$\Pi_{L} |_{V_{h}} = Id$, i.e., the restriction of $\Pi_{L}$ on the finite element space $V$ is identity.  In particular, for any $v\in V_{h}$, we consider the decomposition
\begin{equation}\label{eqn:decomp}
        v =  \sum_{k=0}^{L}  v_{k},
\end{equation}
where $v_{0} := \Pi_{0} v$ and $v_{k} : = (\Pi_{k} - \Pi_{k-1}) v \in V_{k}$ for $k=1, \cdots, L$.
Below, we discuss the stability of this decomposition in terms of the energy norm $\sqrt{a(\cdot, \cdot)} = \sqrt{(\cdot, \cdot)_{A}},$ which involves both the $\omega$-weighted $H^{1}$ semi-norm and the $\rho$-weighted $L^{2}$ norm. First, we consider the stability in terms of the $\omega$-weighted $H^{1}$ semi-norm.
%
 \begin{lemma}\label{lm:stda}
 The decomposition \eqref{eqn:decomp} satisfies the following properties:
  \begin{enumerate}
    \item For any $v\in V_{h},$ there exist $v_k\in V_k \; (k=0, 1, \cdots,
    L)$ such that $v= \sum_{k=0}^L v_k$ and
    \begin{equation}\label{eq:stdav}
       |v_0|_{1,\omega}^2+\sum_{k=1}^L h_k^{-2}\|v_k\|_{0,\omega}^2
      \lesssim \mathcal{J}(\omega)  |v|_{1,\omega}^2,
    \end{equation}
    \item If $\omega_{1} \ge \cdots \ge \omega_{M}$, then for any $v\in \widetilde{V}_{h},$ there exist $v_k\in V_k \; (k=0, 1, \cdots,  L)$ such that $v=\sum_{i=0}^L  v_k$ and
    \begin{equation}\label{eq:stdatv}
        |v_0|_{1,\omega}^2+\sum_{k=1}^L h_k^{-2}\|v_k\|_{0,\omega}^2  \lesssim L^2 |v|_{1,\omega}^2
    \end{equation}
  \end{enumerate}
\end{lemma}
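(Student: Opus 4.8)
The plan is to separate the coarse component $v_0=\Pi_0 v$ from the multilevel sum $\sum_{k=1}^L h_k^{-2}\|v_k\|_{0,\omega}^2$, and to reduce the latter to the per-level estimates of Lemma~\ref{lm:jh1stable} by means of the telescoping identity
\[
v_k=(\Pi_k-\Pi_{k-1})v=(I-\Pi_{k-1})v-(I-\Pi_k)v,\qquad k\ge 1 .
\]
In both parts the coarse term is immediate because $h_0\simeq 1$: for part~(1) the weighted $H^1$-stability \eqref{eq-pih-h1-jstability} gives $|v_0|_{1,\omega}^2\lesssim\cJ(\omega)|v|_{1,\omega}^2$, and for part~(2) the stability \eqref{eq-pih-h1-omega-stability} gives $|v_0|_{1,\omega}^2\lesssim|\log h_0|\,|v|_{1,\omega}^2\simeq|v|_{1,\omega}^2$. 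The whole difficulty therefore lies in the multilevel sum.

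For part~(1) no ordering of $\omega$ is assumed, so the available per-level inputs are the generic estimates of Lemma~\ref{lm:jh1stable}, namely the weighted approximation \eqref{eq-pih-japprox} and the weighted $H^1$-stability \eqref{eq-pih-h1-jstability}, both of which carry the factor $\cJ(\omega)$. Applying \eqref{eq-pih-japprox} to the telescoping identity yields the uniform per-level bound $h_k^{-2}\|v_k\|_{0,\omega}^2\lesssim\cJ(\omega)|v|_{1,\omega}^2$. The obstacle is that this bound does not decay in $k$, so summing it directly over the $L\simeq|\log h_L|$ levels would cost a spurious factor $L$, giving only $L\,\cJ(\omega)|v|_{1,\omega}^2$ instead of \eqref{eq:stdav}. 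Removing this factor is the crux, and is exactly the content of the (weighted) BPX stable-decomposition estimate: using that $\Pi_k$ reproduces $V_{k-1}$ — so that $v_k$ annihilates $V_{k-1}$ — together with the Jackson inequality \eqref{eq-pih-japprox} and the inverse (Bernstein) inequality $|w_k|_{1,\omega}\lesssim h_k^{-1}\|w_k\|_{0,\omega}$ on $V_k$, the standard multilevel summation pairs approximation at one scale against the inverse inequality at another to produce a geometric factor $\gamma^{|j-k|}$, whence a convergent series and the level-independent bound \eqref{eq:stdav}. This is the weighted analogue of the classical BPX argument and is carried out as in \cite{Xu.J;Zhu.Y2008}.

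For part~(2) the monotone ordering $\omega_1\ge\cdots\ge\omega_M$ and the restriction $v\in\widetilde{V}_h\subset\widetilde{H}_D^1(\Omega)$ allow me to replace \eqref{eq-pih-japprox} by the sharper estimate \eqref{eq-pih-omega-approx}, whose Poincar\'e--Friedrichs input \eqref{eq:poincare} holds with a level-independent constant; squaring gives $\|v-\Pi_k v\|_{0,\omega}^2\lesssim h_k^2|\log h_k|\,|v|_{1,\omega}^2$. Through the same telescoping identity this produces the per-level bound $h_k^{-2}\|v_k\|_{0,\omega}^2\lesssim|\log h_k|\,|v|_{1,\omega}^2$, and since $h_k\simeq\gamma^k h_0$ gives $|\log h_k|\simeq k$, the naive summation now succeeds on its own:
\[
\sum_{k=1}^L h_k^{-2}\|v_k\|_{0,\omega}^2\lesssim\Big(\sum_{k=1}^L|\log h_k|\Big)|v|_{1,\omega}^2\simeq L^2\,|v|_{1,\omega}^2 ,
\]
which is \eqref{eq:stdatv}. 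Here the factor $|\log h_k|\simeq k$ already encodes the scale, so no near-orthogonality argument is needed; the quadratic growth $L^2$ is precisely the price paid for an estimate that is robust in the jumps of $\omega$.

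In summary, the only genuinely delicate step is the level-independent summation in part~(1): obtaining \eqref{eq:stdav} without an extra $|\log h|$ factor forces the strengthened Cauchy--Schwarz / BPX machinery, whereas part~(2) reduces to a direct summation of the logarithmically sharp per-level estimates.
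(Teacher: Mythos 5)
Your part (2) is essentially the paper's own proof: telescope $v_k=(I-\Pi_{k-1})v-(I-\Pi_k)v$, apply \eqref{eq-pih-omega-approx} on each level to get $h_k^{-2}\|v_k\|_{0,\omega}^2\lesssim|\log h_k|\,|v|_{1,\omega}^2$, and sum $\sum_{k}|\log h_k|\simeq L^2$. That part is fine. The problem is part (1), where your argument has a genuine gap precisely at the step you yourself identify as the crux. First, a smaller issue: the ``uniform per-level bound'' $h_k^{-2}\|v_k\|_{0,\omega}^2\lesssim\cJ(\omega)|v|_{1,\omega}^2$ does not follow from \eqref{eq-pih-japprox} as stated, since that estimate carries the factor $h$, not $h^2$; taken literally it yields only $h_k^{-2}\|v_k\|_{0,\omega}^2\lesssim\cJ(\omega)h_k^{-1}|v|_{1,\omega}^2$, which is useless for summation. (The $h^2$ version is provable by pulling out $\max_m\omega_m$ and using the unweighted Scott--Zhang estimates, so this may be a misreading of a typo, but as written your per-level claim is unsupported.) Second, and more seriously: the level-independent summation is never proved, only asserted. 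The machinery you invoke is not the right tool for this direction: the strengthened Cauchy--Schwarz inequality \eqref{eqn:scs} is what bounds the energy of a \emph{given} sum $\bigl|\sum_k u_k\bigr|_{1,\omega}^2\lesssim\sum_k h_k^{-2}\|u_k\|_{0,\omega}^2$ (that is how the paper uses it, in Lemma~\ref{lemma-lambda-max} for $\lambda_{\max}$), whereas \eqref{eq:stdav} is the converse, stable-decomposition direction. Moreover, the Jackson--Bernstein/geometric-decay results you appeal to are \emph{unweighted} results; there is no off-the-shelf ``weighted analogue'' valid for arbitrary coefficient distributions --- if there were, part (1) would not need the factor $\cJ(\omega)$ and part (2) would not need the monotone ordering of $\omega$.

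What is missing is the reduction that makes a classical citation legitimate, and this is exactly what the paper does. It pays the factor $\cJ(\omega)$ \emph{once, globally}: bound the weighted quantities by $\max_m\omega_m$ times the corresponding unweighted ones, so that the whole problem becomes one about standard $L^2$ and $H^1$ norms. Then, using the $L^2$-stability of $\Pi_k$ and the reproduction property $\Pi_kQ_k=Q_k$ (with $Q_k$ the standard $L^2$-projection), one gets $\|(\Pi_k-\Pi_{k-1})v\|_{L^2(\Omega)}^2\lesssim\|(I-Q_{k-1})v\|_{L^2(\Omega)}^2$. The key step avoiding the spurious factor $L$ is then an Abel-summation identity: since $h_k^{-2}-h_{k-1}^{-2}\simeq h_k^{-2}$ and, by $L^2$-orthogonality, $\|(I-Q_{k-1})v\|^2-\|(I-Q_k)v\|^2=\|(Q_k-Q_{k-1})v\|^2$, the sum $\sum_{k=1}^L h_k^{-2}\|(I-Q_{k-1})v\|^2$ collapses to $\sum_{k=1}^L h_k^{-2}\|(Q_k-Q_{k-1})v\|^2$, which is bounded by $|v|_{H^1(\Omega)}^2$ by the classical (unweighted) BPX theory \cite{Bramble.J;Pasciak.J;Xu.J1990,Oswald.P1999c}; finally $(\max_m\omega_m)\,|v|_{H^1(\Omega)}^2\le\cJ(\omega)|v|_{1,\omega}^2$. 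Your proposal skips both the weighted-to-unweighted reduction and the $\Pi_k$-to-$Q_k$ transfer, and replaces them with an appeal to machinery that addresses the opposite inequality; as it stands, part (1) is not proved.
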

\begin{proof}
Given any $v\in V_{h},$ to show \eqref{eq:stdav}, we notice that
\begin{align*}
        |v_{0}|_{1,\omega}^{2} &+ \sum_{k=1}^L h_k^{-2}\|v_k\|_{0,\omega}^2\\
      &\lesssim \max_{k=1,\cdots, M} \{\omega_{k}\}  \left(|\Pi_{0 }v |_{H^{1}(\Omega)}^{2} + \sum_{k=1}^L h_k^{-2}\| (\Pi_{k} - \Pi_{k-1}) v\|_{L^{2}(\Omega)}^2\right).
\end{align*}
We have $| \Pi_{0} v |_{H^{1}(\Omega)} \le |v|_{H^{1}(\Omega)}$. To estimate the second term on the right hand side of above inequality, we use the fact that $\Pi_k$
is stable in $L^{2}$ and $\Pi_k Q_k = Q_k$, where $Q_{k}$ is the standard $L^{2}$-projection on $V_{k}$ for $k=0, 1, \cdots, L$.  Specifically, we have
$$
\begin{aligned}
\|&(\Pi_{k} - \Pi_{k-1}) v\|^2_{L^{2}(\Omega)}\\
& \lesssim
\|(Q_{k} - Q_{k-1}) v\|^2_{L^{2}(\Omega)} +
\|(\Pi_{k} - Q_{k}) v\|^2_{L^{2}(\Omega)} +
\|(\Pi_{k-1} - Q_{k-1}) v\|^2_{L^{2}(\Omega)} \\
& \lesssim
\|(Q_{k} - Q_{k-1}) v\|^2_{L^{2}(\Omega)} +
\|(I - Q_{k}) v\|^2_{L^{2}(\Omega)} +
\|(I - Q_{k-1}) v\|^2_{L^{2}(\Omega)} \\
& = 2 \|(I - Q_{k-1}) v\|^2_{L^{2}(\Omega)} \,.
\end{aligned}
$$
Therefore
$$
\begin{aligned}
\sum_{k=1}^L h_k^{-2} \|(\Pi_{k} - &\Pi_{k-1}) v\|^2_{L^{2}(\Omega)}
 \lesssim
\sum_{k=1}^L h_k^{-2} \|(I - Q_{k-1}) v\|^2_{L^{2}(\Omega)} \\
&\lesssim
h_1^{-2} \|(I - Q_{0}) v\|^2_{L^{2}(\Omega)} +
\sum_{k=2}^{L} (h_k^{-2} -h_{k-1}^{-2}) \|(I - Q_{k-1}) v\|^2_{L^{2}(\Omega)} \\
&=
\sum_{k=1}^L h_k^{-2} \|(I - Q_{k-1}) v\|^2_{L^{2}(\Omega)} -
\sum_{k=1}^L h_k^{-2} \|(I - Q_{k}) v\|^2_{L^{2}(\Omega)} \\
&=
\sum_{k=1}^L h_k^{-2} \|(Q_{k} - Q_{k-1}) v\|^2_{L^{2}(\Omega)}  \lesssim |v|^2_{H^{1}(\Omega)} \,.
\end{aligned}
$$
The estimate of the last sum is classical in the BPX theory, see \cite{Bramble.J;Pasciak.J;Xu.J1990,Oswald.P1999c}. Therefore, we have
$$
        |v_{0}|_{1,\omega}^{2} + \sum_{k=1}^L h_k^{-2}\|v_k\|_{0,\omega}^2 \lesssim (\max_{k} \omega_{k})\, |v|^2_{H^{1}(\Omega)} \le \cJ(\omega) |v|^2_{1,\omega}.
$$

To prove \eqref{eq:stdatv} for any $v\in \widetilde{V}_{h},$ by the approximation and stability estimates \eqref{eq-pih-omega-approx}-\eqref{eq-pih-h1-omega-stability}  of $\Pi_{k}$ ($k=0, 1, \cdots, L)$ in Lemma~\ref{lm:jh1stable} and triangle inequality, we obtain
\begin{eqnarray*}
    &&\left |\Pi_0  v\right |_{1,\omega}^2+\sum_{k=1}^L
    h_k^{-2}\|(\Pi_k -\Pi_{k-1})v\|_{0,\omega}^2 \lesssim
    \left(\sum_{k=0}^L |\log h_{k}|\right)  |v |_{1,\omega}^2\lesssim L^2 |v|_{1,\omega}^2.
 \end{eqnarray*}
This proves the inequality \eqref{eq:stdatv}.
\end{proof}
%

Now we establish the stable decomposition in the $\rho$-weighted $L^{2}$ norms. We have the following result.
\begin{lemma}\label{lm:wl2stable}
        The decomposition \eqref{eqn:decomp} satisfies the following properties:
        \begin{enumerate}
                \item For any $u\in V_{h}$, the decomposition $u_{0} = \Pi_{0}u$ and $u_{k} = (\Pi_{k} - \Pi_{k-1})u \in V_{k}$ for $k= 1, \cdots, L$ satisfies:
        \begin{equation}\label{eq:jl2stable}
                \|\Pi_0 u\|^2_{0,\rho} + \sum_{k=1}^L \|(\Pi_{k} - \Pi_{k-1}) u\|^2_{0,\rho}
                \lesssim \cJ(\rho) \|u \|^2_{0,\rho} \,.
        \end{equation}
                \item If the reaction coefficients satisfy $\rho_{1} \ge \rho_{2} \ge \cdots \ge \rho_{M}$, then for any $u\in V_{h}$, the decomposition $u_{0} = \Pi_{0}u$ and $u_{k} = (\Pi_{k} - \Pi_{k-1})u \in V_{k}$ for $k= 1, \cdots, L$ is stable:
        \begin{equation}\label{eq:l2stable}
\|\Pi_0 u\|^2_{0,\rho} + \sum_{k=1}^L \|(\Pi_{k} - \Pi_{k-1}) u\|^2_{0,\rho}
\lesssim \|u \|^2_{0,\rho} \,.
\end{equation}
\end{enumerate}
\end{lemma}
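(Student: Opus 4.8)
The plan is to compare the $\Pi_k$-based splitting with the \emph{exact} $\rho$-orthogonal telescoping furnished by the $\rho$-weighted $L^2$-projections. Let $Q_k^\rho:L^2(\Omega)\to V_k$ denote the projection in the $(\cdot,\cdot)_{0,\rho}$ inner product. Since the $Q_k^\rho$ are nested, $\rho$-orthogonal, and $Q_L^\rho u=u$ for $u\in V_h$, Pythagoras gives the identity $\|Q_0^\rho u\|_{0,\rho}^2+\sum_{k=1}^L\|(Q_k^\rho-Q_{k-1}^\rho)u\|_{0,\rho}^2=\|u\|_{0,\rho}^2$, which is precisely the right-hand side we are aiming for. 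The two parts of the lemma then differ only in which stability constant is available for $\Pi_k$ in the $\rho$-weighted norm: under $\rho_1\ge\cdots\ge\rho_M$ Corollary~\ref{cor:wl2stable} supplies $\|\Pi_k v\|_{0,\rho}\lesssim\|v\|_{0,\rho}$ with an absolute constant, giving the clean bound \eqref{eq:l2stable}, whereas for arbitrary $\rho$ only the worst-case estimate \eqref{eq:pih-wl2stable-worsest} holds, which feeds the factor $\cJ(\rho)$ into \eqref{eq:jl2stable}. I would therefore treat the ordered case in full and simply carry the extra $\cJ(\rho)^{1/2}$ through each use of the stability of $\Pi_k$ in the general case.

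Next I would reduce each interpolant difference to its orthogonal counterpart. Because $\Pi_k$ reproduces $V_k$ --- exactly the property $\Pi_k Q_k=Q_k$ used in the proof of Lemma~\ref{lm:stda} --- we may write $\Pi_k=Q_k^\rho+\Pi_k(I-Q_k^\rho)$, so that $(\Pi_k-\Pi_{k-1})u$ equals $(Q_k^\rho-Q_{k-1}^\rho)u$ plus the error terms $\Pi_k(I-Q_k^\rho)u$ and $\Pi_{k-1}(I-Q_{k-1}^\rho)u$, each bounded in the $\rho$-weighted norm by $\|(I-Q_k^\rho)u\|_{0,\rho}$ via the weighted stability of $\Pi_k$. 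Summing the orthogonal part reproduces $\|u\|_{0,\rho}^2$ by the identity above, and the weight $\rho$ is transferred throughout exactly as in Corollary~\ref{cor:wl2stable}: the ordered choice of $T_x$ (in the subdomain of largest reaction coefficient) guarantees, through the support estimate \eqref{eq-pih-l2stable} of Lemma~\ref{lm:l2stable}, that mass is only ever spread toward subdomains with larger $\rho$, keeping every constant independent of the jumps.

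The delicate step --- and the one I expect to be the real obstacle --- is summing the error contributions without incurring a spurious factor of $L=|\log h|$. A crude triangle inequality only bounds $\sum_k\|(\Pi_k-\Pi_{k-1})u\|_{0,\rho}^2$ by $\sum_k\|(I-Q_k^\rho)u\|_{0,\rho}^2$, and in the pure $L^2$ setting there is no mesh-size smallness to exploit --- unlike the $h_k^{-2}$-weighted telescoping in the proof of Lemma~\ref{lm:stda} --- so this last sum degrades to $L\|u\|_{0,\rho}^2$. Removing the logarithm forces one to use the almost-orthogonality between levels: the differences $(\Pi_k-\Pi_{k-1})u$ form a Littlewood--Paley-type decomposition, and a strengthened Cauchy--Schwarz inequality of the form $|((\Pi_k-\Pi_{k-1})u,(\Pi_j-\Pi_{j-1})u)_{0,\rho}|\lesssim\gamma^{|k-j|}\|(\Pi_k-\Pi_{k-1})u\|_{0,\rho}\,\|(\Pi_j-\Pi_{j-1})u\|_{0,\rho}$ with $\gamma<1$ must be established. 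Here I would use the locality from Lemma~\ref{lm:l2stable} --- which confines $(\Pi_k-\Pi_{k-1})u$ to a scale-$h_k$ patch --- together with the facts that $\Pi_k-\Pi_{k-1}$ annihilates $V_{k-1}$ and that the $\Pi_k$ enjoy the usual approximation property, to extract the geometric decay in $|k-j|$. This is the standard mechanism behind the $L^2$-stability of quasi-interpolation splittings in BPX theory \cite{Bramble.J;Pasciak.J;Xu.J1990,Oswald.P1999c}, and the genuine content of the lemma is its adaptation to the discontinuous weight $\rho$; everything outside this strengthened Cauchy--Schwarz is routine bookkeeping of the weights.
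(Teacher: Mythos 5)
Your reduction matches the paper's own proof up to the crux: you use the exact Pythagoras identity for the $\rho$-weighted projections $Q_k^\rho$, the splitting of $(\Pi_k-\Pi_{k-1})u$ into $(Q_k^\rho-Q_{k-1}^\rho)u$ plus the error terms $(\Pi_k-Q_k^\rho)u$ and $(\Pi_{k-1}-Q_{k-1}^\rho)u$ via $\Pi_k Q_k^\rho = Q_k^\rho$, and the weighted stability of $\Pi_k$ from Corollary~\ref{cor:wl2stable} (with the worst-case factor $\cJ(\rho)$ handling part (1)). You also correctly isolate the real difficulty: bounding $\sum_{k=0}^L\|(\Pi_k-Q_k^\rho)u\|^2_{0,\rho}$ by $\|u\|^2_{0,\rho}$ without a factor of $L$. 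But the tool you propose for exactly this step is both unproven and, as stated, logically insufficient, so the proof has a genuine gap at its center.

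The problem is the direction of the inequality. A pairwise strengthened Cauchy--Schwarz bound among the output pieces $u_k=(\Pi_k-\Pi_{k-1})u$, with an implicit constant, controls cross terms and hence yields the \emph{upper} frame bound $\|\sum_k u_k\|_{0,\rho}^2\lesssim\sum_k\|u_k\|_{0,\rho}^2$; what the lemma needs is the \emph{lower} bound $\sum_k\|u_k\|_{0,\rho}^2\lesssim\|u\|_{0,\rho}^2$. Trying to reverse SCS gives $\|u\|_{0,\rho}^2\ge\bigl(1-2C\gamma/(1-\gamma)\bigr)\sum_k\|u_k\|_{0,\rho}^2$, which is vacuous unless $2C\gamma/(1-\gamma)<1$; a ``$\lesssim$''-constant $C$ provides no such smallness (even $C=1$ would require $\gamma<1/3$). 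The decay that actually closes the argument is not between the pieces $u_k$ themselves but between the error operators and an \emph{exactly orthogonal} decomposition of the input: the paper writes $\Pi_k=\Pi_k\widehat{Q}_k$ and $Q_k^\rho=Q_k^\rho\widehat{Q}_k$, where $\widehat{Q}_k$ is the elementwise $L^{2}$ projection onto discontinuous piecewise linears, expands $(\Pi_k-Q_k^\rho)u=\sum_{j\le k}(\Pi_k-Q_k^\rho)w_j$ with $w_j=(\widehat{Q}_j-\widehat{Q}_{j-1})u$, and proves the decay $\|(\Pi_k-Q_k^\rho)w_j\|_{0,\rho}\lesssim\bigl(c_d(h_j,h_k)\,h_k/h_j\bigr)^\sigma\|w_j\|_{0,\rho}$ for fixed $0<\sigma<\tfrac12$, by combining the \emph{local} approximation property of the weighted projection $Q_k^\rho$ in the fractional norm $|\cdot|_{\sigma,\rho}$ (Lemma 4.5 of \cite{Bramble.J;Xu.J1991}) with an inverse estimate. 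The fractional index is forced because $w_j$ is discontinuous, hence not in $H^1$ --- which is also why the ``usual approximation property of $\Pi_k$'' you invoke has no purchase here (neither $u\in V_h$ nor $w_j$ has the required regularity). A Schur-type summation of the resulting geometric series then bounds the error sum by $\sum_j\|w_j\|^2_{0,\rho}\le\|u\|^2_{0,\rho}$. Your remarks about ``annihilation of $V_{k-1}$'' and locality gesture toward this mechanism, but without the weighted fractional-order approximation estimate for $Q_k^\rho$ --- the genuinely nontrivial ingredient, and the place where the discontinuous weight $\rho$ must be confronted --- the logarithm you are worried about cannot be removed.
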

\begin{proof}
Below, we only give the detailed proof of \eqref{eq:l2stable}. The proof of \eqref{eq:jl2stable} can be reduced to show the estimate
$$
        \|\Pi_0 u\|^2_{L^{2}(\Omega)} + \sum_{k=1}^L \|(\Pi_{k} - \Pi_{k-1}) u\|^2_{L^{2}(\Omega)}\lesssim \|u \|^2_{L^{2}(\Omega)} \,,
$$
which is a special case of \eqref{eq:l2stable}.

Given any $u\in V_{h}$, by the stability \eqref{eq:pih-wl2stable} of $\Pi_{0}$ in the $\rho$-weighted $L^{2}$ norm, we have
$$
        \|\Pi_0 u\|^2_{0,\rho} \lesssim \|u\|^2_{0,\rho} \,.
$$
To estimate the summation term, we let $Q_k^\rho$ be the $\rho$-weighted $L^{2}$-projection on $V_k$.
Note that
$$
\|Q^\rho_{0} u\|^2_{0,\rho} + \sum_{k=1}^L \|(Q^\rho_{k} - Q^\rho_{k-1}) u\|^2_{0,\rho} = \|u\|^2_{0,\rho} \,.
$$
By the stability of \eqref{eq:pih-wl2stable} of $\Pi_{k}$ ($k=0,1,\cdots, L$) in the $\rho$-weighted $L^{2}$ norm, $\Pi_k Q_k^{\rho} = Q_k^{\rho}$ and triangle inequality, we obtain
$$
\begin{aligned}
\|(\Pi_{k} - \Pi_{k-1}) u\|^2_{0,\rho}
& \lesssim
\|(Q_{k}^{\rho} - Q_{k-1}^{\rho}) u\|^2_{0,\rho} +
\|(\Pi_{k} - Q_{k}^{\rho}) u\|^2_{0,\rho} +
\|(\Pi_{k-1} - Q_{k-1}^{\rho}) u\|^2_{0,\rho}.
\end{aligned}
$$
Therefore,
$$
\sum_{k=1}^L \|(\Pi_{k} - \Pi_{k-1}) u\|^2_{0,\rho} \lesssim
\|u\|^2_{0,\rho} + \sum_{k=0}^{L} \|(\Pi_k - Q^\rho_{k}) u\|^2_{0,\rho} \,.
$$

To bound the sum on the right, we need to introduce some additional
notation.  Let ${\widehat V}_k$ be the space of discontinuous
piecewise linear polynomials, associated with the same mesh as $V_k$,
and let ${\widehat Q}_k$ be the piecewise local $L^{2}$-projection onto ${\widehat V}_k$.
Note that by definition, $\Pi_k = \Pi_k {\widehat Q}_k$ and
$Q_k^{\rho} = Q_k^{\rho} {\widehat Q}_k$.
Set $v_k = (\Pi_k - Q^\rho_{k}) u$, $w_j=({\widehat Q}_j - {\widehat Q}_{j-1}) u$,
and fix $0<\sigma<\frac{1}{2}$.
We have
$$
\begin{aligned}
\sum_{k=0}^{L} \|(\Pi_k - Q^\rho_{k}) u\|^2_{0,\rho}
&= \sum_{k=0}^{L} \sum_{j \leq k} ((\Pi_k - Q^\rho_k) w_j, v_k)_{0,\rho} \\
&\lesssim \sum_{k=0}^{L} \sum_{j \leq k} c_d^\sigma(h_j,h_k)\, h_k^\sigma \, |w_j|_{\sigma,\rho} \, \|v_k\|_{0,\rho}\\
&\lesssim \sum_{k=0}^{L} \sum_{j \leq k} \left(c_d(h_j,h_k)\, \frac{h_k}{h_j}\right)^\sigma \, \|w_j\|_{0,\rho} \, \|v_k\|_{0,\rho} \,.
\end{aligned}
$$
Here we used the (local) approximation property of $Q_{k}^{\rho}$, see \cite[Lemma 4.5]{Bramble.J;Xu.J1991}:
$$
\|(I - Q^\rho_k) w_j\| \lesssim c_d(h_j,h_k)\, h_k\, |w_j|_{1,\rho} \,,
$$
where $c_d(h_j,h_k) =\left\{\begin{array}{ll}
|\log \frac{h_j}{h_k}|^{\frac{1}{2}}, & d=2\\
\left(\frac{h_j}{h_k}\right)^{\frac{1}{2}}, & d=3.
\end{array}\right.$
Notice that
$$
c_d(h_j,h_k)\, \frac{h_k}{h_j} < \left(\sqrt{\gamma}\right)^{k-j} \qquad \text{with} \qquad \gamma < 1 \,.
$$
This implies
$$
\sum_{k=0}^L \|(\Pi_{k} - Q^\rho_{k}) u\|^2_{0,\rho} \lesssim \sum_{j=1}^L
\|({\widehat Q}_j - {\widehat Q}_{j-1}) u\|^2_{0,\rho}
\leq \|u\|^2_{0,\rho} \,.
$$
Thus, we obtain
\begin{equation*}
\|\Pi_0 u\|^2_{0,\rho} + \sum_{k=1}^L \|(\Pi_{k} - \Pi_{k-1}) u\|^2_{0,\rho}
\lesssim \|u \|^2_{0,\rho} \,.
\end{equation*}
This completes the proof.
\end{proof}

\subsection{BPX Preconditioning} \label{sec-bpx}
Now we are in position to discuss the performance of the multilevel preconditioners. For simplicity,  we introduce the mesh dependent coefficient on each level $k=1, \cdots, L$
\begin{equation} \label{omegak}
\omega_k = \omega + h_k^{2}\, \rho \,.
\end{equation}
On each level $k = 1, \cdots, L$, let $R_{k}$ be a smoother based on the SPD operator $A_{k}$, and let $R_{0} = A_{0}^{-1}$.
We assume that the smoothers satisfy
$$
        (R_{k} u_{k}, u_{k}) \simeq h_k^{2} \|u_k\|^2_{0,\omega_k}
\qquad \forall u_k \in V_k\,, k=1,\cdots, L
$$
which holds for the Jacobi and Gauss-Seidel smoothers, as shown in Section~\ref{sec:pre}. Then the BPX preconditioner $B: V\to V$ is defined by
$$
        B = \sum_{k=0}^{L} R_{k} Q_{k},
$$
where $Q_{k}: V_{h}\to V_{k}$ be the $L^{2}$ projection on $V_{k}$ for $k=0,1,\cdots, L.$ The BPX preconditioner $B$ satisfies the following well-known identity (\cite{Widlund.O1992,Xu.J1992,Xu.J;Zikatanov.L2002}):
$$
        (B^{-1} v, v) = \inf_{\sum_{k=0}^{L} v_{k} = v}  \sum_{k=0}^{L} (R_{k}^{-1}v, v),\qquad \forall v\in V.
$$
Based on the assumption on $R_{k}$, it satisfies
\begin{equation}\label{eq:bpx}
        (B^{-1} v, v) \simeq \inf_{\sum_{k=0}^L v_k = v}
\left\{a(v_0, v_{0}) + \sum_{k=1}^L h_k^{-2} \|v_k\|^2_{0,\omega_{k}} \right\} \,.
\end{equation}
To analyze the BPX preconditioner, we make use of the following strengthened Cauchy Schwarz inequality.
\begin{lemma}[{Strengthened Cauchy Schwarz, cf. \cite[Lemma 6.1]{Xu.J1992a}}]
        \label{lm:scs}
        For $j, k=0, \cdots, L$ and $j\le  k$, we have
        \begin{equation}
        \label{eqn:scs}
                \int_{\Omega} \omega \nabla v_{k} \cdot \nabla v_{j} dx \lesssim \gamma^{k-j} (h_{k}^{-1}\|v_{k}\|_{0,\omega}) (h_{j}^{-1}\|v_{j}\|_{0,\omega}), \quad \forall v_{k}\in V_{k}, v_{j}\in V_{j}.
        \end{equation}
\end{lemma}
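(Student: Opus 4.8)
The plan is to prove the inequality elementwise on the coarse mesh $\cT_j$, using that $v_j$ is linear on each $\tau\in\cT_j$ and that, since $\cT_0$ resolves the coefficient interfaces and the meshes are nested, every such $\tau$ lies inside a single subdomain $\Omega_m$, so that $\omega\equiv\omega_m$ on $\tau$. On each $\tau$ I would integrate by parts; because $v_j$ is linear, $\Delta v_j=0$ in $\tau$ and only the boundary term survives,
$$
\int_\tau \omega\,\nabla v_k\cdot\nabla v_j\,dx
 = \omega_m\int_{\partial\tau} v_k\,\frac{\partial v_j}{\partial n}\,ds\,.
$$
Summing over $\tau\in\cT_j$ then reduces the estimate to controlling these boundary integrals. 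Note that the element contributions are kept separate, so the jumps of $\omega\,\partial v_j/\partial n$ across subdomain interfaces are never assembled, and the piecewise-constant weight causes no difficulty.

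First I would record two scaling estimates on a fixed $\tau\in\cT_j$. Since $\nabla v_j$ is constant on $\tau$, a direct scaling computation gives $\|\partial v_j/\partial n\|_{L^2(\partial\tau)}\lesssim h_j^{-1/2}\,|v_j|_{H^1(\tau)}$, which encodes the smoothness of the coarse factor. For the fine factor I would invoke the discrete trace inequality for finite element functions, $\|v_k\|_{L^2(\partial\tau)}\lesssim h_k^{-1/2}\,\|v_k\|_{L^2(\tau)}$, obtained by applying the standard trace inequality together with the inverse inequality on each fine element meeting $\partial\tau$ and summing. Cauchy--Schwarz on $\partial\tau$, then over the coarse elements, and absorbing $\omega_m$ into the weighted norms, yields
$$
\int_\Omega \omega\,\nabla v_k\cdot\nabla v_j\,dx
 \lesssim h_j^{-1/2}h_k^{-1/2}\,|v_j|_{1,\omega}\,\|v_k\|_{0,\omega}\,.
$$
It then remains to apply the inverse inequality $|v_j|_{1,\omega}\lesssim h_j^{-1}\|v_j\|_{0,\omega}$ to the coarse factor and regroup, since $h_j^{-1/2}h_k^{-1/2}\cdot h_j^{-1}=(h_k/h_j)^{1/2}\,h_k^{-1}h_j^{-1}$. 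This is exactly the claimed bound; because $h_k\simeq\gamma^k h_0$, the geometric prefactor $(h_k/h_j)^{1/2}$ equals $(\sqrt{\gamma})^{k-j}$, which is the meaning of $\gamma^{k-j}$ here (with $\gamma$ understood as a generic contraction constant in $(0,1)$, consistent with the $\sqrt{\gamma}$ appearing in the later application of this lemma).

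The delicate point is the bookkeeping of powers of $h$ that produces the genuine geometric decay rather than the trivial Cauchy--Schwarz bound $(h_k^{-1}\|v_k\|_{0,\omega})(h_j^{-1}\|v_j\|_{0,\omega})$. The gain comes entirely from the asymmetry of the two trace estimates: integrating by parts onto the linear function $v_j$ costs only $h_j^{-1/2}$ on $\partial\tau$, whereas the oscillatory fine function $v_k$ costs $h_k^{-1/2}$, and it is precisely the mismatch $h_j^{-1/2}h_k^{-1/2}$ (against the trivial $h_j^{-1}h_k^{-1}$) that supplies the contraction $(h_k/h_j)^{1/2}$. One should check that this half-power, and not a full power of $h_k/h_j$, is the sharp exponent: a coarse and a fine nodal basis function sharing a vertex realize a cosine of exactly $(h_k/h_j)^{1/2}$, so the factor cannot be improved. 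A cleaner variant replaces $v_k$ by $v_k-\bar v_k^\tau$ in the boundary integral, which is legitimate because $\int_{\partial\tau}\partial v_j/\partial n\,ds=\int_\tau\Delta v_j\,dx=0$; this sharpens the constant but does not affect the power.
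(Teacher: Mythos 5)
Your proof is correct and follows essentially the same route as the source the paper relies on: the paper gives no proof of Lemma~\ref{lm:scs} itself, only the citation to \cite[Lemma 6.1]{Xu.J1992a}, and that proof is exactly your argument --- elementwise integration by parts on the coarse mesh (legitimate because nestedness and the interface-resolving coarse mesh make $\omega$ constant on each $\tau\in\cT_j$), the asymmetric trace estimates costing $h_j^{-1/2}$ and $h_k^{-1/2}$, and a final inverse inequality. Your observation that this yields the factor $(\sqrt{\gamma})^{k-j}$ rather than literally $\gamma^{k-j}$ is also the right reading: any fixed ratio in $(0,1)$ suffices for the geometric summation in Lemma~\ref{lemma-lambda-max}, and the paper itself uses $(\sqrt{\gamma})^{k-j}$ in the proof of Lemma~\ref{lm:wl2stable}.
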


When $\rho=0$, the largest eigenvalue of the matrix $BA$ is
bounded by a constant, as shown for example in \cite{Xu.J;Zhu.Y2008}. For general
$\rho$ we only get a sub-optimal estimate.
\begin{lemma} \label{lemma-lambda-max}
The largest eigenvalue of $BA$ is independent of the coefficients
$\rho$ and $\omega$, but depends logarithmically on the mesh size:
$$
        (A u, u) \lesssim\, |\!\log h|\, (B^{-1} u, u), \quad \forall u \in V_{h},
$$
which implies $\lambda_{\max}(BA) \lesssim |\log h|.$
\end{lemma}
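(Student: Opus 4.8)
The plan is to reduce the eigenvalue bound to a subspace-decomposition estimate and then split the energy norm into its diffusion and reaction parts. Since $B$ is SPD, $\lambda_{\max}(BA)=\max_{u\neq 0}(Au,u)/(B^{-1}u,u)$, so the assertion is equivalent to $(Au,u)\lesssim |\log h|\,(B^{-1}u,u)$ for all $u\in V_h$. By the BPX identity \eqref{eq:bpx}, $(B^{-1}u,u)$ equals, up to constants, the infimum over all decompositions $u=\sum_{k=0}^{L}v_k$ (with $v_k\in V_k$) of $a(v_0,v_0)+\sum_{k=1}^{L}h_k^{-2}\|v_k\|^2_{0,\omega_k}$. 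Because the right-hand side is an infimum, it suffices to establish, for \emph{every} such decomposition,
\begin{equation*}
a(u,u)\;\lesssim\;|\log h|\left(a(v_0,v_0)+\sum_{k=1}^{L}h_k^{-2}\|v_k\|^2_{0,\omega_k}\right),
\end{equation*}
and then pass to the infimum.

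I would then write $a(u,u)=|u|^2_{1,\omega}+\|u\|^2_{0,\rho}$ and treat the two terms separately. For the diffusion part, expand $|u|^2_{1,\omega}=\sum_{j,k}\int_\Omega \omega\,\nabla v_j\cdot\nabla v_k$ and apply the strengthened Cauchy–Schwarz inequality of Lemma~\ref{lm:scs}: for $j\le k$ the cross term is bounded by $\gamma^{k-j}(h_j^{-1}\|v_j\|_{0,\omega})(h_k^{-1}\|v_k\|_{0,\omega})$. The geometric factor $\gamma^{k-j}$ allows a Schur/Young summation that controls the full double sum by its diagonal, yielding $|u|^2_{1,\omega}\lesssim a(v_0,v_0)+\sum_{k=1}^{L}h_k^{-2}\|v_k\|^2_{0,\omega}$ with \emph{no} logarithmic factor; this is the classical BPX spectral-radius estimate, the coarse level being absorbed into $a(v_0,v_0)$ via the exact coarse solve $R_0=A_0^{-1}$ and a coarse-mesh Poincaré inequality. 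Since $\omega\le\omega_k$ pointwise, this is $\le a(v_0,v_0)+\sum_{k=1}^{L}h_k^{-2}\|v_k\|^2_{0,\omega_k}$.

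For the reaction part, where the logarithmic loss originates, I would estimate crudely. By the triangle inequality in $\|\cdot\|_{0,\rho}$ followed by Cauchy–Schwarz over the $L+1$ levels,
\begin{equation*}
\|u\|^2_{0,\rho}=\left\|\sum_{k=0}^{L}v_k\right\|^2_{0,\rho}\le (L+1)\sum_{k=0}^{L}\|v_k\|^2_{0,\rho}.
\end{equation*}
The decisive per-level bound is $\|v_k\|^2_{0,\rho}\le h_k^{-2}\|v_k\|^2_{0,\omega_k}$, which is immediate from the definition $\omega_k=\omega+h_k^2\rho$ (so that $h_k^2\|v_k\|^2_{0,\rho}\le\|v_k\|^2_{0,\omega_k}$), while at $k=0$ one uses directly $\|v_0\|^2_{0,\rho}\le a(v_0,v_0)$. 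Since $L+1\simeq|\log h|$, this gives $\|u\|^2_{0,\rho}\lesssim |\log h|\,\big(a(v_0,v_0)+\sum_{k=1}^{L}h_k^{-2}\|v_k\|^2_{0,\omega_k}\big)$. Adding the two parts produces the displayed target inequality, and taking the infimum over decompositions finishes the proof.

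The main obstacle — and the reason the estimate is only sub-optimal — is that the reaction term admits no strengthened Cauchy–Schwarz cancellation: unlike the $\omega$-weighted $H^1$ inner products, the $\rho$-weighted $L^2$ inner products between different levels do not decay geometrically, so an arbitrary decomposition can only be bounded by its diagonal at the price of one power $L\simeq|\log h|$ coming from the level-wise Cauchy–Schwarz. This is precisely why the bound degenerates to $O(|\log h|)$ for general $\rho$ yet reduces to $O(1)$ when $\rho=0$. A secondary technical point to handle carefully is the coarse level, controlled by the exact solve so that its contribution is exactly $a(v_0,v_0)$, together with the elementary identity $\|v_k\|^2_{0,\rho}\le h_k^{-2}\|v_k\|^2_{0,\omega_k}$ that couples the reaction norm to the mesh-dependent weight $\omega_k$.
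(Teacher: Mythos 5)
Your proposal follows the paper's route almost line for line: reduce via the BPX identity \eqref{eq:bpx} to bounding $a(u,u)$ by an arbitrary decomposition, split $a(u,u)=|u|^2_{1,\omega}+\|u\|^2_{0,\rho}$, control the diffusion part by the strengthened Cauchy--Schwarz inequality of Lemma~\ref{lm:scs} with no logarithmic loss, and control the reaction part by level-wise Cauchy--Schwarz, which is the sole source of the factor $L+1\simeq|\log h|$; the coupling $h_k^2\rho\le\omega_k$, i.e. $\|v_k\|^2_{0,\rho}\le h_k^{-2}\|v_k\|^2_{0,\omega_k}$, is also exactly the paper's final step.

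There is, however, one genuine flaw in your treatment of the coarse level in the diffusion part. You expand the full double sum $\sum_{j,k\ge 0}\int_\Omega\omega\nabla v_j\cdot\nabla v_k\,dx$, apply Lemma~\ref{lm:scs} to all pairs including $j=0$, and then claim the resulting coarse diagonal term $h_0^{-2}\|v_0\|^2_{0,\omega}$ is absorbed into $a(v_0,v_0)$ via ``a coarse-mesh Poincar\'e inequality.'' The inequality you need, $\|v_0\|_{0,\omega}\lesssim|v_0|_{1,\omega}$, is an $\omega$-\emph{weighted} Poincar\'e--Friedrichs inequality, and it does \emph{not} hold with a constant independent of $\omega$: for a floating subdomain $\Omega_m$ (one with ${\rm meas}(\partial\Omega_m\cap\Gamma_D)=0$) and $\omega_m$ large, a function equal to $1$ on $\Omega_m$ and decaying to zero outside it has $\|v\|^2_{0,\omega}\gtrsim\omega_m$ while $|v|^2_{1,\omega}=O(1)$. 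This is precisely the obstruction that the subspace $\widetilde{V}_h$ and the inequality \eqref{eq:poincare} are designed around, and invoking such a Poincar\'e inequality here would make your bound depend on $\cJ(\omega)$, contradicting the coefficient-independence asserted in the lemma. The fix is what the paper does: first split $|u|^2_{1,\omega}\le 2|v_0|^2_{1,\omega}+2\bigl|\sum_{k=1}^{L}v_k\bigr|^2_{1,\omega}$, keep the coarse contribution directly as $|v_0|^2_{1,\omega}\le a(v_0,v_0)$ (no Poincar\'e inequality needed), and run the strengthened Cauchy--Schwarz/Schur summation only over levels $j,k\ge 1$. With that one-line change your argument is complete and coincides with the paper's proof.
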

\begin{proof}
Given any $u\in V_{h}$, let $u = \sum_{k=0}^{L} u_{k}$ with $u_{k}\in V_{k}$ for ($k=0, \cdots, L$) be an arbitrary decomposition of $u$. Then by the Strengthened Cauchy Schwarz inequality \eqref{eqn:scs}, we obtain
\begin{align*}
        |u|_{1,\omega}^{2} &= \left|\sum_{k=0}^{L} u_{k} \right|_{1,\omega}^{2} \le 2\left(|u_{0}|_{1,\omega}^{2} + \sum_{k=1}^{L} \sum_{j=1}^{L} \int_{\Omega} \omega \nabla u_{k}\cdot\nabla u_{j} dx\right)\\
        &\lesssim |u_{0}|_{1,\omega}^{2} + \sum_{k=1}^{L} \sum_{j=1}^{L} \gamma^{|k-j|} \left(h_{k}^{-1} \|u_{k}\|_{0,\omega}\right) \left( h_{j}^{-1}\|u_{j}\|_{0,\omega}\right)\\
        &\lesssim |u_{0}|_{1,\omega}^{2} + \sum_{k=1}^{L} h_{k}^{-2} \|u_{k}\|_{0,\omega}^{2}.
\end{align*}
On the other hand, by the Schwarz inequality, we obtain
\begin{equation} \label{eq-l2-decomp}
        \|u\|^2_{L^{2}(\Omega_m)} \, \lesssim L \sum_{k=0}^L \|u_k\|^2_{L^{2}(\Omega_m)} \,,
\end{equation}
which implies that
$$
        \|u\|^2_{0,\rho} \, \lesssim L \sum_{k=0}^L \|u_k\|^2_{0,\rho}.
$$
Therefore, we have
\begin{align*}
        (Au, u)& = |u|_{1,\omega}^{2} + \|u\|_{0,\rho}^{2} \lesssim L\left(
        a(u_0,u_0) + \sum_{k=1}^L  h_k^{-2} \|u_k\|^2_{0,\omega_k}\right).
\end{align*}
Since the decomposition is arbitrary, we have $(A u, u) \lesssim\, |\!\log h|\, (B^{-1} u, u)$, which completes the proof.
\end{proof}
\begin{remark}
The estimate \eqref{eq-l2-decomp} can not be improved in general.
This can be seen by taking $u \in V_1$ and decomposing it using
$u_k = \frac{1}{L} u$ for $1 \leq k \leq L$.
\end{remark}

To estimate the smallest eigenvalue, we classify the coefficients in two different cases:
\begin{enumerate}
        \item[(C1)] The coefficients $\omega$ and $\rho$ have the same distribution. Namely,  if $\omega_{i} \ge \omega_{j}$ then $\rho_{i} \ge\rho_{j}$ and vice versa.
        \item[(C2)] The coefficients $\omega$ and $\rho$ have different distribution.
\end{enumerate}
In the case of (C1), we may label the subdomains based on the ordering $\omega_{1}\ge \omega_{2}\ge \cdots \ge \omega_{M}$. By the definition of $\Pi_{h}$, it satisfies simultaneously the stable decomposition \eqref{eq:l2stable} in the $\rho$-weighted $L^{2}$ norm, and the stable decomposition \eqref{eq:stdatv} in the $\omega$-weighted $H^{1}$ semi-norm. Based on these properties, we have the following result.
\begin{lemma} \label{lemma-bpx-rhoconst}
If the coefficients $\omega$ and $\rho$ satisfy (C1), then
$$
 (B^{-1}u, u) \,\lesssim\, |\!\log h|^2\, (Au\, u), \qquad \forall u \in \widetilde{V}_{h},
$$
which implies $\lambda_{m_{0}+1}(BA) \gtrsim |\!\log h|^2.$
\end{lemma}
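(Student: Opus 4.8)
The plan is to establish the lower bound on $(B^{-1}u,u)$ by exhibiting a \emph{single} decomposition $u = \sum_{k=0}^L u_k$ that is simultaneously stable in both the $\omega$-weighted $H^1$ seminorm and the $\rho$-weighted $L^2$ norm, and then invoking the BPX identity \eqref{eq:bpx} together with the min-max principle. The natural candidate is precisely the decomposition \eqref{eqn:decomp} induced by the interpolation operator, namely $u_0 = \Pi_0 u$ and $u_k = (\Pi_k - \Pi_{k-1})u$ for $k \geq 1$. The whole point of working in case (C1) and labeling the subdomains so that $\omega_1 \geq \cdots \geq \omega_M$ (which by (C1) forces $\rho_1 \geq \cdots \geq \rho_M$ as well) is that this \emph{one} operator $\Pi_h$ satisfies both hypotheses at once: Lemma~\ref{lm:stda}\,(2) applies to the $\omega$-part because $u \in \widetilde{V}_h$ and the $\omega_m$ are ordered, while Lemma~\ref{lm:wl2stable}\,(2) applies to the $\rho$-part because the $\rho_m$ are ordered.

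The key steps, in order, are as follows. First I would recall the infimum characterization \eqref{eq:bpx}, so that it suffices to bound the bracketed quantity for \emph{some} admissible decomposition from above by $|\log h|^2\,a(u,u)$; I choose the $\Pi_k$-decomposition. Second, I would split the bracket using $\omega_k = \omega + h_k^2\rho$, writing
$$
a(v_0,v_0) + \sum_{k=1}^L h_k^{-2}\|v_k\|_{0,\omega_k}^2
\lesssim \Big(|v_0|_{1,\omega}^2 + \sum_{k=1}^L h_k^{-2}\|v_k\|_{0,\omega}^2\Big)
+ \Big(\|v_0\|_{0,\rho}^2 + \sum_{k=1}^L \|v_k\|_{0,\rho}^2\Big),
$$
where the $h_k^{-2}$ and $h_k^2$ cancel in the reaction part. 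Third, I would bound the first grouped term by Lemma~\ref{lm:stda}\,(2), giving $\lesssim L^2\,|v|_{1,\omega}^2$, and the second grouped term by Lemma~\ref{lm:wl2stable}\,(2), giving $\lesssim \|u\|_{0,\rho}^2$. Since $L \simeq |\log h|$ and both $|u|_{1,\omega}^2 \leq a(u,u)$ and $\|u\|_{0,\rho}^2 \leq a(u,u)$, the two pieces combine to $\lesssim |\log h|^2\,a(u,u)$. Finally, since $a(v_0,v_0)$ in \eqref{eq:bpx} also contains a reaction contribution $\|v_0\|_{0,\rho}^2$, I would note that $\Pi_0$ is stable in the $\rho$-weighted $L^2$ norm by \eqref{eq:pih-wl2stable}, so this lower-order term is absorbed; combining with the min-max principle on $\widetilde{V}_h$ (whose codimension is $m_0$) yields $\lambda_{m_0+1}(BA) \gtrsim |\log h|^{-2}$, i.e. the stated estimate.

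The \textbf{main obstacle} is conceptual rather than computational: ensuring that one and the same decomposition is admissible for both lemmas simultaneously. Lemma~\ref{lm:stda}\,(2) requires the $\omega$-ordering plus $u \in \widetilde{V}_h$, while Lemma~\ref{lm:wl2stable}\,(2) requires the $\rho$-ordering, and these two orderings coincide precisely under hypothesis (C1). If (C1) failed, the same labeling of subdomains could not make both coefficient sequences decreasing, and the best one could do would be to fall back on the coefficient-variation estimates \eqref{eq:stdav} and \eqref{eq:jl2stable}, picking up factors of $\cJ(\omega)$ and $\cJ(\rho)$ that destroy robustness; this is exactly the dichotomy flagged in the introduction. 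The one technical point to verify carefully is the clean cancellation in the reaction term: the weight $\omega_k = \omega + h_k^2\rho$ is engineered so that $h_k^{-2}\|v_k\|_{0,\omega_k}^2 = h_k^{-2}\|v_k\|_{0,\omega}^2 + \|v_k\|_{0,\rho}^2$, which is what decouples the energy into an $\omega$-governed $H^1$ sum and a $\rho$-governed $L^2$ sum, allowing the two stability lemmas to be applied independently.
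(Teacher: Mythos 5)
Your proposal is correct and follows essentially the same route as the paper's own proof: the same $\Pi_k$-based decomposition \eqref{eqn:decomp}, the same pair of stability results \eqref{eq:stdatv} and \eqref{eq:l2stable} (made simultaneously applicable by the (C1) ordering), combined through the BPX identity \eqref{eq:bpx} and the min-max principle on $\widetilde{V}_h$. The only difference is that you spell out explicitly the splitting $h_k^{-2}\|v_k\|_{0,\omega_k}^2 = h_k^{-2}\|v_k\|_{0,\omega}^2 + \|v_k\|_{0,\rho}^2$, which the paper leaves implicit, and you correctly state the eigenvalue bound as $\lambda_{m_0+1}(BA) \gtrsim |\log h|^{-2}$ (the exponent sign in the lemma statement is a typo).
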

\begin{proof}
For any $u\in \widetilde{V}_{h}$, we consider the decomposition
$$
u = \Pi_0 u + \sum_{k=1}^L (\Pi_{k} - \Pi_{k-1}) u \,,
$$
i.e. $u_0 = \Pi_0 u$, $u_k = (\Pi_{k} - \Pi_{k-1}) u$ for $1 \le k \le L$.
As a direct consequence of the stable decomposition \eqref{eq:stdatv} and \eqref{eq:l2stable}, we obtain
$$
        a(u_{0}, u_{0}) + \sum_{k=1}^L h_k^{-2} \|u_{k}\|^2_{0,\omega_{k}}
\lesssim L^2\, a(u, u) \,.
$$
By \eqref{eq:bpx}, this implies $(B^{-1} u, u) \lesssim |\log h|^{2} (A u, u)$. The estimate of $\lambda_{m_{0} + 1}$ then follows by noticing that $\dim(\widetilde{V}_{h}) = \dim(V_{h}) - m_{0}$ and the min-max  principle (cf. Remark~\ref{rk:minmax}).
\end{proof}

Now we turn to discuss the case (C2) when $\omega$ and $\rho$ have different distribution, e.g., there exists at least a pair of (neighboring) subdomains $\Omega_{i}$ and $\Omega_{j}$ on which $\omega_{i} >\omega_{j}$ but $\rho_{i} < \rho_{j}$. In this case, the interpolation operator $\Pi_{h}$ does not satisfy the simultaneous stability in both the $\rho$-weighted $L^{2}$ norm and $\omega$-weighted $H^{1}$ semi norm. Therefore, in this case, we only get some pessimistic estimates which depend on the jumps in the coefficients.

When $\cJ(\omega) < \cJ(\rho)$, we should label the subdomains based on the order of $\rho$ to guarantee the $\rho$-weighted $L^{2}$ stability \eqref{eq:l2stable}. Note that this includes the case when $\rho =0$ in some subdomains, but not globally 0. While for the $\omega$-weighted approximation and stability estimate, we apply the decomposition \eqref{eq:stdav} instead of \eqref{eq:stdatv}.
\begin{lemma} \label{lemma-bpx-jomega}
        If the coefficients $\omega$ and $\rho$ satisfy (C2)  and $\cJ(\omega) \le \cJ(\rho)$, then
        $$
                (B^{-1} u, u) \,\lesssim\, \cJ(\omega) (Au, u), \qquad \forall u\in V_{h},
        $$
        which implies that $\lambda_{\min}(BA) \gtrsim \cJ^{-1}(\omega).$ In particular, if $\omega$ is a global constant, then $\lambda_{\min}(BA) \gtrsim 1$, which is independent of the coefficient $\rho.$
\end{lemma}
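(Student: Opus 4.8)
The plan is to exhibit, for an arbitrary $u\in V_h$, a single explicit decomposition that is simultaneously stable in the $\omega$-weighted $H^1$ semi-norm and in the $\rho$-weighted $L^2$ norm, and then feed it into the BPX identity \eqref{eq:bpx}. Since we are in case (C2) with $\cJ(\omega)\le\cJ(\rho)$, I would first relabel the subdomains so that $\rho_1\ge\rho_2\ge\cdots\ge\rho_M$; this is the ordering that makes the sharp $\rho$-weighted estimate \eqref{eq:l2stable} available, and it costs nothing on the diffusion side because \eqref{eq:stdav} was proved using only the \emph{unweighted} $L^2$-stability of $\Pi_h$ and hence holds for \emph{any} labeling. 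With this ordering fixed, take the canonical decomposition $u_0=\Pi_0 u$ and $u_k=(\Pi_k-\Pi_{k-1})u$ for $1\le k\le L$.

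The key algebraic step is to separate the two coefficients inside the BPX functional. Recalling $\omega_k=\omega+h_k^2\rho$ from \eqref{omegak}, I would write
$$a(u_0,u_0)+\sum_{k=1}^L h_k^{-2}\|u_k\|_{0,\omega_k}^2
=\Big(|u_0|_{1,\omega}^2+\sum_{k=1}^L h_k^{-2}\|u_k\|_{0,\omega}^2\Big)
+\Big(\|u_0\|_{0,\rho}^2+\sum_{k=1}^L \|u_k\|_{0,\rho}^2\Big),$$
where the factor $h_k^{-2}$ cancels the $h_k^2$ in $\omega_k$ and turns the reaction contribution into a clean $\rho$-weighted sum. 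The first bracket is bounded by $\cJ(\omega)\,|u|_{1,\omega}^2$ via the stable decomposition \eqref{eq:stdav}, and the second bracket is bounded by $\|u\|_{0,\rho}^2$ via \eqref{eq:l2stable}; both lemmas apply to precisely this $\Pi_k$-decomposition, which is the point of choosing it.

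Combining the two bounds and using $\cJ(\omega)\ge 1$ to absorb the reaction term gives
$$(B^{-1}u,u)\lesssim \cJ(\omega)\,|u|_{1,\omega}^2+\|u\|_{0,\rho}^2
\lesssim \cJ(\omega)\big(|u|_{1,\omega}^2+\|u\|_{0,\rho}^2\big)=\cJ(\omega)\,(Au,u),$$
valid for all $u\in V_h$; this is the asserted inequality, and since it holds on the full space, the min-max principle (Remark~\ref{rk:minmax}) yields $\lambda_{\min}(BA)\gtrsim\cJ^{-1}(\omega)$. When $\omega$ is a global constant, $\cJ(\omega)=1$ and the estimate reduces to $\lambda_{\min}(BA)\gtrsim 1$, independent of $\rho$.

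The main obstacle is conceptual rather than computational: one must verify that a \emph{single} choice of decomposition serves both weighted norms at once. This is exactly where the asymmetry between the two stable-decomposition lemmas is exploited---\eqref{eq:l2stable} is sensitive to the labeling and demands the $\rho$-decreasing order, whereas \eqref{eq:stdav} is insensitive to it---so ordering by $\rho$ is a free choice that sharpens the reaction estimate while leaving the diffusion estimate at its generic $\cJ(\omega)$ level. The hypothesis $\cJ(\omega)\le\cJ(\rho)$ is what guarantees this labeling is the advantageous one, the complementary regime being handled by interchanging the roles of $\omega$ and $\rho$.
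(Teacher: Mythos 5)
Your proof is correct and follows essentially the same route as the paper's: relabel the subdomains in decreasing order of $\rho$, take the canonical decomposition $u_0=\Pi_0 u$, $u_k=(\Pi_k-\Pi_{k-1})u$, and combine the $\rho$-weighted stability \eqref{eq:l2stable} with the generic $\omega$-weighted bound \eqref{eq:stdav} through the BPX identity \eqref{eq:bpx}. The only difference is presentational: you spell out the splitting of $\|u_k\|_{0,\omega_k}^2$ into its $\omega$ and $\rho$ parts and the role of $\cJ(\omega)\ge 1$, which the paper leaves implicit in the phrase ``the conclusion then follows.''
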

\begin{proof}
        Given any $u\in V_{h}$, we define the decomposition $u = \sum_{k=0}^{L} u_{k}$ as $u_{0} = \Pi_{0} u$, and $u_{k} = (\Pi_{k} -\Pi_{k-1}) u$ for $k=1, \cdots, L$. Since the coefficient $\rho$ satisfies $\rho_{1} \ge \cdots \ge \rho_{M}$, this decomposition satisfies \eqref{eq:l2stable}. On the other hand, since $\omega$ and $\rho$ have different distribution, we can not apply \eqref{eq:stdatv} in this case, but we still have the stable decomposition \eqref{eq:stdav}. The conclusion then follows by \eqref{eq:bpx}, \eqref{eq:l2stable} and \eqref{eq:stdav}.
\end{proof}

On the other hand, if $\cJ(\omega) > \cJ(\rho)$, then we should label the subdomains based on the ordering of $\omega$ to guarantee the stable decomposition \eqref{eq:stdatv} in the $\omega$-weighted $H^{1}$ semi-norm. For the stable decomposition in term of $\rho$-weighted $L^{2}$ norm, we can not apply \eqref{eq:l2stable} directly, but we may use the estimate \eqref{eq:jl2stable}.
So in this case, we have the following result.
\begin{lemma} \label{lemma-bpx-jrho}
If the coefficients $\omega$ and $\rho$ satisfies (C2) and $\cJ(\omega)>\cJ(\rho)$, then
$$
 (B^{-1}u, u)\,\lesssim\, \max\{\cJ(\rho), |\!\log h|^2\}\, (Au, u), \qquad \forall u\in \widetilde{V}_h,
$$
which implies $\lambda_{m_{0}+1} (BA) \gtrsim \min\{ \cJ^{-1}(\rho), |\log(h)|^{-2}\}.$
\end{lemma}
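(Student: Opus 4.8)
The plan is to mirror the proof of Lemma~\ref{lemma-bpx-jomega}, interchanging the roles of $\omega$ and $\rho$ according to which coefficient has the larger variation. The starting point is the BPX identity \eqref{eq:bpx}. Since $\omega_k = \omega + h_k^2 \rho$, the weighted norm splits cleanly as $h_k^{-2}\|v_k\|_{0,\omega_k}^2 = h_k^{-2}\|v_k\|_{0,\omega}^2 + \|v_k\|_{0,\rho}^2$, and likewise $a(v_0,v_0) = |v_0|_{1,\omega}^2 + \|v_0\|_{0,\rho}^2$. Thus the functional inside the infimum in \eqref{eq:bpx} decouples into an $\omega$-weighted $H^1$ part and a $\rho$-weighted $L^2$ part, and I would bound the two separately.

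First I would fix the labeling. Because $\cJ(\omega) > \cJ(\rho)$, I order the subdomains so that $\omega_1 \ge \omega_2 \ge \cdots \ge \omega_M$, which is precisely the ordering required by \eqref{eq:stdatv}; condition (C2) forbids ordering by $\rho$ simultaneously, which is exactly why one of the two estimates must lose sharpness. With the labeling fixed, I take the canonical decomposition $u_0 = \Pi_0 u$ and $u_k = (\Pi_k - \Pi_{k-1})u$ for $1 \le k \le L$. For the $\omega$-weighted $H^1$ contribution, since $u \in \widetilde{V}_h$ and the subdomains are ordered by $\omega$, the stable decomposition \eqref{eq:stdatv} together with $L \simeq |\log h|$ gives
$$
|u_0|_{1,\omega}^2 + \sum_{k=1}^L h_k^{-2}\|u_k\|_{0,\omega}^2 \lesssim L^2 |u|_{1,\omega}^2 \lesssim |\log h|^2\, |u|_{1,\omega}^2 \,.
$$

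For the $\rho$-weighted $L^2$ contribution, the subdomains are \emph{not} ordered by $\rho$, so \eqref{eq:l2stable} is unavailable; instead I invoke the general estimate \eqref{eq:jl2stable}, which holds for any $u \in V_h$ regardless of labeling at the cost of a factor $\cJ(\rho)$, yielding $\|u_0\|_{0,\rho}^2 + \sum_{k=1}^L \|u_k\|_{0,\rho}^2 \lesssim \cJ(\rho)\|u\|_{0,\rho}^2$. Adding the two bounds and using $a(u,u) = |u|_{1,\omega}^2 + \|u\|_{0,\rho}^2$ produces
$$
a(u_0,u_0) + \sum_{k=1}^L h_k^{-2}\|u_k\|_{0,\omega_k}^2 \lesssim \max\{|\log h|^2, \cJ(\rho)\}\, a(u,u) \,,
$$
so that \eqref{eq:bpx} gives $(B^{-1}u,u) \lesssim \max\{\cJ(\rho), |\log h|^2\}(Au,u)$ for all $u \in \widetilde{V}_h$. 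The spectral bound $\lambda_{m_0+1}(BA) \gtrsim \min\{\cJ^{-1}(\rho), |\log h|^{-2}\}$ then follows from the min-max principle (Remark~\ref{rk:minmax}) together with $\dim(\widetilde{V}_h) = \dim(V_h) - m_0$.

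I do not anticipate a genuine obstacle: all the heavy lifting was already carried out in establishing the two stable-decomposition lemmas (Lemma~\ref{lm:stda} and Lemma~\ref{lm:wl2stable}), and this result is simply their combination through the clean splitting of $\omega_k$. The only real decision is the choice of labeling, and the point worth emphasizing is \emph{why} we order by $\omega$: doing so lets the unavoidable $\cJ$-penalty fall on the smaller jump $\cJ(\rho)$ while the $\omega$-side cost is absorbed into the mild logarithmic factor $|\log h|^2$, giving the balanced $\max$ in the final estimate.
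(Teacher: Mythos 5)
Your proposal is correct and follows exactly the route the paper intends: the paper states this lemma without a formal proof, relying on the preceding discussion, which prescribes precisely your argument---order the subdomains by $\omega$, apply the stable decomposition \eqref{eq:stdatv} for the $\omega$-weighted $H^1$ part, apply \eqref{eq:jl2stable} (which needs no ordering, at the cost of $\cJ(\rho)$) for the $\rho$-weighted $L^2$ part, and combine via \eqref{eq:bpx}. Your explicit splitting $h_k^{-2}\|v_k\|_{0,\omega_k}^2 = h_k^{-2}\|v_k\|_{0,\omega}^2 + \|v_k\|_{0,\rho}^2$ and the observation that both stability estimates apply to the same canonical decomposition $u_k = (\Pi_k - \Pi_{k-1})u$ are exactly the details the paper leaves implicit.
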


In summary, we have the following results for the BPX preconditioner.
\begin{theorem}\label{thm:bpx}
The BPX preconditioner $B$ satisfies:
\begin{enumerate}
        \item If the coefficients $\omega$ and $\rho$ satisfy (C1), the $m_{0}$-th effective condition number of $BA$ is independent of the jumps in $\omega$ and $\rho$:
$$
\kappa_{m_0} (BA) \lesssim |\!\log h|^3 \,.
$$
Here $m_0 = |I|$, is the number of floating subdomains. In this case, it recovers essentially the main result in \cite[Lemma 4.2]{Xu.J;Zhu.Y2008}. The only difference is here the $m_{0}$-th effective condition number of $BA$ has an additional $|\log h|$ factor from Lemma~\ref{lemma-lambda-max}, which is a result of $\rho \neq 0$.
        \item If the coefficients $\omega$ and $\rho$ satisfy (C2), with $\cJ(\omega) > \cJ(\rho)$, the  $m_{0}$-th effective condition number of $BA$ is independent of the jump in $\omega$:
$$
        \kappa_{m_0} (BA) \lesssim \max\{\cJ(\rho) |\log h|, \; |\!\log h|^3\} \,.
$$
        \item If the coefficients $\omega$ and $\rho$ satisfy (C2), with $\cJ(\omega) \le \cJ(\rho)$, the condition number of $BA$ is independent of the jump in $\rho$:
        $$
                \kappa(BA) \lesssim \cJ(\omega) |\log h|.
        $$
        In particular, if $\omega$ is a global constant, then the condition number of $BA$ is independent of the jumps in both of $\omega$ and $\rho$.
\end{enumerate}
\end{theorem}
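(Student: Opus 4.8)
The plan is to assemble the three assertions directly from the eigenvalue bounds already established, since each is nothing more than the quotient of the upper bound on $\lambda_{\max}(BA)$ against the appropriate lower bound on either $\lambda_{m_0+1}(BA)$ or $\lambda_{\min}(BA)$. The ingredient common to all three cases is Lemma~\ref{lemma-lambda-max}, which gives $\lambda_{\max}(BA) = \lambda_N(BA) \lesssim |\log h|$ uniformly in both coefficients. What separates the cases is which lower bound is available and over which subspace it is valid, and correspondingly whether the resulting statement is about the $m_0$-th effective condition number or the genuine condition number.

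For assertion (1), under hypothesis (C1), I would invoke Lemma~\ref{lemma-bpx-rhoconst}, which yields $\lambda_{m_0+1}(BA) \gtrsim |\log h|^{-2}$. By Definition~\ref{def:effcond} the $m_0$-th effective condition number is $\kappa_{m_0}(BA) = \lambda_N(BA)/\lambda_{m_0+1}(BA)$, so multiplying the two bounds gives $\kappa_{m_0}(BA) \lesssim |\log h|\cdot|\log h|^2 = |\log h|^3$. For assertion (2), under (C2) with $\cJ(\omega) > \cJ(\rho)$, I would use Lemma~\ref{lemma-bpx-jrho}, which gives $\lambda_{m_0+1}(BA) \gtrsim \min\{\cJ^{-1}(\rho), |\log h|^{-2}\}$. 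Taking the reciprocal turns the $\min$ into a $\max$, and multiplying by $\lambda_{\max}(BA) \lesssim |\log h|$ produces $\kappa_{m_0}(BA) \lesssim |\log h|\cdot\max\{\cJ(\rho), |\log h|^2\} = \max\{\cJ(\rho)|\log h|,\, |\log h|^3\}$. In both cases the passage from the subspace estimates (valid on $\widetilde{V}_h$) to statements about $\lambda_{m_0+1}$ is justified by the min--max principle of Remark~\ref{rk:minmax}, using $\dim(\widetilde{V}_h) = \dim(V_h) - m_0$.

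Assertion (3) is the one where I would take care to distinguish the effective condition number from the full one. Under (C2) with $\cJ(\omega) \le \cJ(\rho)$, Lemma~\ref{lemma-bpx-jomega} supplies a lower bound not merely on $\lambda_{m_0+1}$ but on $\lambda_{\min}(BA) \gtrsim \cJ^{-1}(\omega)$, precisely because the underlying estimate $(B^{-1}u,u) \lesssim \cJ(\omega)(Au,u)$ holds for \emph{all} $u \in V_h$ rather than only on $\widetilde{V}_h$; no floating-subdomain eigenvalues need to be excluded. Consequently I obtain a bound on the genuine condition number $\kappa(BA) = \lambda_N/\lambda_1 \lesssim \cJ(\omega)|\log h|$, and when $\omega$ is a global constant the value $\cJ(\omega) = 1$ collapses this to $\kappa(BA) \lesssim |\log h|$, independent of the jumps in both coefficients.

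There is no substantive analytical obstacle remaining once the four preceding lemmas are in hand; the theorem is their organized consequence, and the only genuine care points are bookkeeping. First, I must track the compounding of logarithmic factors: the single $|\log h|$ from $\lambda_{\max}$ (a suboptimal artifact of $\rho \neq 0$ coming from Lemma~\ref{lemma-lambda-max}) multiplies the $|\log h|^2$ already present in the lower bounds, which is what yields the cubic power in (1) and (2). Second, I must correctly reciprocate the $\min$/$\max$ in the (C2) estimate of (2), and above all keep straight that (1) and (2) bound the \emph{effective} condition number $\kappa_{m_0}$ while (3) bounds the full $\kappa$, the difference being exactly whether the available lower bound holds on $\widetilde{V}_h$ or on all of $V_h$.
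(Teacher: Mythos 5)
Your proposal is correct and follows exactly the paper's intended argument: the theorem is stated there as a summary of Lemma~\ref{lemma-lambda-max}, Lemma~\ref{lemma-bpx-rhoconst}, Lemma~\ref{lemma-bpx-jomega}, and Lemma~\ref{lemma-bpx-jrho}, and you assemble precisely those eigenvalue bounds into the three condition-number estimates, including the correct reading of Lemma~\ref{lemma-bpx-rhoconst} as $\lambda_{m_0+1}(BA) \gtrsim |\log h|^{-2}$ (the paper's stated exponent there is a sign typo) and the correct distinction between $\kappa_{m_0}$ in cases (1)--(2) and the full $\kappa$ in case (3).
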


\subsection{Multigrid V-cycle} \label{sec-mg}
Now we consider the Multigrid V-cycle as a solution
 algorithm and as a preconditioner to our original elliptic problem
\eqref{eq:model}. We first introduce some standard notation. For each level $k=0, 1,\dots, L,$ we define the projections $P_k:  V_{h}\to V_k$ by
$$a(P_k u, v_k)=a(u,v_k)\;\; \forall v_k\in
  V_k.$$ At each level, let $R_{k}: V_{k}\to V_{k}$ be the smoothing operator. Here we use point Gauss-Seidel as the smoother. Then that standard multigrid V-cycle algorithm solves \eqref{eq:eq}  by the
iterative method
$$u_k\leftarrow u_k +B_k(f_k-A_k u_k),$$ where the operator $B_k: V_k\to
V_k$ is defined recursively as follows:
\begin{alg}[V-cycle]
\label{alg:mg}
Let $B_0=A_0^{-1},$ for $k>0$ and
$g\in  V_k,$ define
\begin{enumerate}
  \item Presmoothing : $w_1=R_k g;$
  \item Correction: $w_2=w_1+ B_{k-1}Q_{k-1}(g-A_k w_1);$
  \item Postsmoothing: $B_{k} g=w_2+R_k^*(g-A_k w_2).$
\end{enumerate}
\end{alg}
We denote $B_{L} = B$ for simplicity. Following the same analysis in \cite{Xu.J;Zhu.Y2008}, it is clear that $\lambda_{\max}(BA) \le 1$. To estimate the smallest eigenvalue of $BA$, we consider the error propagation operator $I-BA$. By the XZ-identity (cf. \cite{Xu.J;Zikatanov.L2002}),
we can get the following estimate, which is a
straightforward generalization of  \cite[Lemma 5.2]{Xu.J;Zhu.Y2008}.
\begin{lemma}\label{lm:mg}
        For any $v\in V_{h}$, consider the decomposition $v = \Pi_{0} v + \sum_{l=0}^{L}(\Pi_{k} - \Pi_{k-1}) v$, then the error propagate operator $I-BA$ satisfies the following estimate
        $$
                \|I-BA\|_{A} = \frac{c_{0}}{1+c_{0}} = 1- \frac{1}{1+ c_{0}},
        $$
        where
        \begin{equation}
        \label{eq:c0}
                c_{0} \lesssim \sup_{\substack{v\in V_{h}\\
                \|v\|_{A} =1}}  \left( \sum_{k=0}^{L} \|P_{k}v - \Pi_{k} v\|_{A}^{2} + \sum_{k=1}^{L} h_{k}^{-2}  \|(\Pi_{k} - \Pi_{k-1}) v\|_{0,\omega_{k}}\right).
        \end{equation}
        If we restrict to the subspace $\widetilde{V}_{h}$, we have a similar estimate:
        $$
                \|(I-BA)|_{\widetilde{V}_{h}}\|_{A} = \frac{\tilde{c}_{0}}{1+\tilde{c}_{0}} = 1- \frac{1}{1+ \tilde{c}_{0}},
        $$
        where
        \begin{equation}
        \label{eq:tc0}
                \tilde{c}_{0} \lesssim \sup_{\substack{v\in \widetilde{V}_{h}\\
                \|v\|_{A} =1}}  \left( \sum_{k=0}^{L} \|P_{k}v - \Pi_{k} v\|_{A}^{2} + \sum_{k=1}^{L} h_{k}^{-2}  \|(\Pi_{k} - \Pi_{k-1}) v\|_{0,\omega_{k}}\right).
        \end{equation}
\end{lemma}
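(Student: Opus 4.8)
The plan is to apply the Xu--Zikatanov (XZ) identity \cite{Xu.J;Zikatanov.L2002} to the symmetric successive subspace correction operator realized by Algorithm~\ref{alg:mg}. Because the V-cycle uses $R_k$ for presmoothing and its adjoint $R_k^*$ for postsmoothing, the resulting $B$ is symmetric and the error propagation operator factors as $I-BA=\hat E^*\hat E$ for the one-sweep operator $\hat E=(I-T_L)\cdots(I-T_0)$, $T_k=R_kA_kP_k$. Hence $I-BA$ is symmetric positive semidefinite in the $A$-inner product, so its $A$-norm equals the supremum of the Rayleigh quotient $((I-BA)v,v)_A/\|v\|_A^2$. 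The XZ identity supplies this quotient \emph{pointwise}: for each $v$ it equals $c_0(v)/(1+c_0(v))$, where $c_0(v)$ is an infimum over all decompositions $v=\sum_k v_k$, $v_k\in V_k$, of a sum of level-wise smoother-weighted norms. Since $t\mapsto t/(1+t)$ is increasing, taking the supremum over $v\in V_h$ (resp.\ over $v\in\widetilde{V}_h$) produces exactly the two stated estimates with $c_0=\sup_v c_0(v)$ (resp.\ $\tilde c_0$). This monotonicity argument is what legitimizes using the \emph{same} expression over the restricted domain $\widetilde{V}_h$: one simply shrinks the set over which the supremum is taken.

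It then remains to bound $c_0(v)$ from above, which I would do by discarding the infimum and inserting the concrete decomposition $v_0=\Pi_0 v$, $v_k=(\Pi_k-\Pi_{k-1})v$ for $k\ge1$. The crucial simplification comes from telescoping: since $\Pi_L|_{V_h}=\mathrm{Id}$, the tail satisfies $\sum_{j>k}v_j=v-\Pi_k v$, and because $\Pi_k v\in V_k$ and $P_k$ is the $A$-orthogonal projection onto $V_k$ we obtain $P_k\sum_{j>k}v_j=P_k v-\Pi_k v$. This is precisely the quantity appearing in the first sum of \eqref{eq:c0}, so the ``tail'' contribution of each level of the XZ summand is controlled by $\|P_k v-\Pi_k v\|_A^2$.

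For the remaining, purely local part of the XZ summand I would invoke the smoother properties of Gauss--Seidel: the spectral equivalence $\bar R_k\simeq R_k$ of the symmetrized smoother together with the norm relation $(R_k^{-1}u_k,u_k)\simeq h_k^{-2}\|u_k\|_{0,\omega_k}^2$ (the same relation underlying \eqref{eq:bpx}). Applying it to $u_k=v_k=(\Pi_k-\Pi_{k-1})v$ yields exactly the second sum $\sum_k h_k^{-2}\|(\Pi_k-\Pi_{k-1})v\|_{0,\omega_k}^2$ in \eqref{eq:c0}. Splitting each XZ summand by the triangle inequality into its tail part and its local part, then summing over $k$, gives $c_0(v)\lesssim\big(\text{first sum}\big)+\big(\text{second sum}\big)$ for the chosen decomposition; taking the supremum over the appropriate set yields \eqref{eq:c0} and \eqref{eq:tc0}.

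The main obstacle I expect is the clean separation in this last step. The XZ summand couples $v_k$ with the \emph{smoothed} tail $R_k^*A_kP_k\sum_{j>k}v_j$, so I must verify, using only the admissible Gauss--Seidel bounds ($\lambda_{\max}(R_kA_k)\le1$ and $\bar R_k\simeq R_k$), that the energy of the smoothed tail is dominated by $\|P_k v-\Pi_k v\|_A^2$ rather than by a weaker $h_k$-dependent quantity, and that the interlevel off-diagonal contributions remain summable. The latter is where a strengthened Cauchy--Schwarz estimate of the type \eqref{eqn:scs} enters, guaranteeing geometric decay $\gamma^{|k-j|}$ across levels so that the double sum collapses to a single sum. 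Once this bookkeeping is in place, the two-term bound in \eqref{eq:c0}--\eqref{eq:tc0} follows, in direct parallel with \cite[Lemma~5.2]{Xu.J;Zhu.Y2008}.
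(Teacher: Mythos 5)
Your proposal is correct and takes essentially the same route as the paper, which supplies no detailed proof but simply invokes the Xu--Zikatanov identity and points to Lemma 5.2 of Xu--Zhu (2008): symmetrize the V-cycle error as $\hat E^*\hat E$, apply the XZ identity, insert the concrete decomposition $v_0=\Pi_0 v$, $v_k=(\Pi_k-\Pi_{k-1})v$, telescope so that the tail becomes $P_k v-\Pi_k v$, and absorb the local part via the smoother equivalence $(\bar R_k^{-1}u_k,u_k)\simeq h_k^{-2}\|u_k\|^2_{0,\omega_k}$. The one superfluous element is the anticipated strengthened Cauchy--Schwarz step: because the XZ summand couples $v_k$ only with $P_k\sum_{j>k}v_j=P_kv-\Pi_kv$, no interlevel double sum ever arises in this lemma, and the smoothed-tail energy is dominated by $\|P_kv-\Pi_kv\|_A^2$ through the standard algebraic smoother bound $(\bar R_k^{-1}R_k^*A_kw,\,R_k^*A_kw)\le(A_kw,w)$, so \eqref{eqn:scs} is not needed here (it enters only in the largest-eigenvalue estimate of Lemma \ref{lemma-lambda-max}).
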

From Lemma~\ref{lm:mg}, as in \cite{Xu.J;Zhu.Y2008} we can deduce by min-max principle (cf. Remark~\ref{rk:minmax}):
\begin{align*}
        \lambda_{\min}(BA) &= \min_{\substack{v\in V_{h}\\ v\neq 0}} \frac{a(BA v, v)}{a(v,v)} \ge \frac{1}{1+c_{0}}, \\
        \lambda_{m_{0} + 1}(BA)  &\ge \min_{\substack{v\in \widetilde{V}_{h}\\ v\neq 0}} \frac{a(BA v, v)}{a(v,v)} \ge \frac{1}{1+ \tilde{c}_{0}},
\end{align*}
where $m_{0} = |I|$ is the number of floating subdomains.
%
According to the above result, the convergence of the multigrid V-cycle method, and the condition number estimate of the multigrid preconditioner rely on the estimate on the constant $c_{0}$; while the estimate on the effective condition number relies on the estimate on $\tilde{c}_{0}$. Both of these estimates follow from the stable decompositions \eqref{eq:c0} and \eqref{eq:tc0}.  Now, based on the discussion for the BPX preconditioner case, we can obtain similar results for the multigrid V-cycle.

\begin{theorem} \label{th-mg-omega1}
The multigrid preconditioner $B$ defined in Algorithm~\ref{alg:mg} satisfies:
\begin{enumerate}
        \item If the coefficients $\omega$ and $\rho$ satisfy (C1), the $m_{0}$-th effective condition number of $BA$ is independent of the jumps in $\omega$ and $\rho$:
$$
\kappa_{m_0} (BA) \lesssim |\!\log h|^2 \,.
$$
Here $m_0 = |I|$, is the number of floating subdomains.
        \item If the coefficients $\omega$ and $\rho$ satisfy (C2), with $\cJ(\omega) > \cJ(\rho)$, the  $m_{0}$-th effective condition number of $BA$ is independent of the jump in $\omega$:
$$
        \kappa_{m_0} (BA) \lesssim \max\{\cJ(\rho) |\log h|, \; |\!\log h|^2\} \,.
$$
        \item If the coefficients $\omega$ and $\rho$ satisfy (C2), with $\cJ(\omega) \le \cJ(\rho)$, the condition number of $BA$ is independent of the jump in $\rho$:
        $$
                \kappa(BA) \lesssim \cJ(\omega)|\log h|.
        $$
        In particular, if $\omega$ is a global constant, then the condition number of $BA$ is independent of the jumps in both of $\omega$ and $\rho$.
\end{enumerate}
\end{theorem}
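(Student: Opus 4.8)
The plan is to follow the same case analysis used for the BPX preconditioner in Theorem~\ref{thm:bpx}, but now driven by the XZ-identity estimate of Lemma~\ref{lm:mg} together with the bound $\lambda_{\max}(BA)\le 1$ already noted for the V-cycle. Since $\kappa(BA)=\lambda_{\max}(BA)/\lambda_{\min}(BA)\le 1+c_0$ and $\kappa_{m_0}(BA)\le\lambda_{\max}(BA)/\lambda_{m_0+1}(BA)\le 1+\tilde c_0$, it suffices to bound the constants $c_0$ and $\tilde c_0$ of \eqref{eq:c0} and \eqref{eq:tc0}. Each is a supremum over $\|v\|_A=1$ of the sum of two pieces: the \emph{stable-decomposition piece} $\sum_{k=1}^L h_k^{-2}\|(\Pi_k-\Pi_{k-1})v\|_{0,\omega_k}$ and the \emph{consistency piece} $\sum_{k=0}^L\|P_kv-\Pi_kv\|_A^2$.

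First I would dispose of the stable-decomposition piece, which is exactly the quantity already controlled in the proofs of Lemmas~\ref{lemma-bpx-rhoconst}, \ref{lemma-bpx-jomega} and \ref{lemma-bpx-jrho}. Depending on which of $\cJ(\omega),\cJ(\rho)$ is larger one orders the subdomains accordingly and combines the $\omega$-weighted decomposition (\eqref{eq:stdav} or \eqref{eq:stdatv}) with the $\rho$-weighted decomposition (\eqref{eq:l2stable} or \eqref{eq:jl2stable}). This produces the bounds $\lesssim L^2\,a(v,v)$ in case (C1), $\lesssim\max\{\cJ(\rho),L^2\}\,a(v,v)$ in case (C2) with $\cJ(\omega)>\cJ(\rho)$, and $\lesssim\cJ(\omega)\,a(v,v)$ in case (C2) with $\cJ(\omega)\le\cJ(\rho)$.

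The heart of the argument is the consistency piece, which has no analogue in the BPX estimate \eqref{eq:bpx}. Since $P_k$ is the $a(\cdot,\cdot)$-orthogonal projection onto $V_k$ and $\Pi_kv\in V_k$, one has $P_kv-\Pi_kv=P_k(v-\Pi_kv)$ and therefore $\|P_kv-\Pi_kv\|_A\le\|v-\Pi_kv\|_A$. Writing $\|v-\Pi_kv\|_A^2=|v-\Pi_kv|_{1,\omega}^2+\|v-\Pi_kv\|_{0,\rho}^2$, I would bound the $\omega$-seminorm part through Lemma~\ref{lm:jh1stable}---using the sharp estimate \eqref{eq-pih-h1-omega-stability} with its intrinsic $|\log h_k|^{1/2}$ factor when the subdomains are ordered by $\omega$, and the worst-case estimate \eqref{eq-pih-h1-jstability} carrying a factor $\cJ(\omega)$ otherwise---and the $\rho$-norm part through the weighted $L^2$ stability of Corollary~\ref{cor:wl2stable}, namely \eqref{eq:pih-wl2stable} when the ordering is by $\rho$ and the worst-case \eqref{eq:pih-wl2stable-worsest} carrying $\cJ(\rho)$ otherwise. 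The decisive point is that these interpolation errors do not decay geometrically in $k$ (unlike the $L^2$-projection increments telescoped in the BPX theory), so summing over the $L+1\simeq|\log h|$ levels multiplies each per-level bound by a factor $L$. Hence a term treated by its worst-case estimate acquires exactly one extra factor of $|\log h|$ (turning $\cJ(\rho)$ into $\cJ(\rho)|\log h|$, and $\cJ(\omega)$ into $\cJ(\omega)|\log h|$), while the sharply-ordered $\omega$-part accumulates $\sum_k|\log h_k|\simeq|\log h|^2$.

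Assembling the two pieces in each case, the consistency piece is of the same order as, or dominates by a single factor $|\log h|$, the stable-decomposition piece, so that $c_0$ (respectively $\tilde c_0$) is bounded by the claimed factor; combining with $\lambda_{\min}(BA)\ge 1/(1+c_0)$, $\lambda_{m_0+1}(BA)\ge 1/(1+\tilde c_0)$ and $\lambda_{\max}(BA)\le 1$ then yields the three stated estimates. I expect the main obstacle to be the bookkeeping in the consistency piece: one must track, case by case, which ordering is fixed and therefore which of the $\omega$- and $\rho$-weighted approximation estimates is sharp versus worst-case, and verify that the $L$-fold summation inflates only the worst-case weighted term by $|\log h|$, so that the final exponents of $|\log h|$ and the coefficients $\cJ(\omega),\cJ(\rho)$ match those asserted in parts (1)--(3).
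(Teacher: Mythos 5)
Your proposal is correct and takes essentially the same route as the paper: the paper likewise reduces Theorem~\ref{th-mg-omega1} to bounding $c_0$ and $\tilde c_0$ from Lemma~\ref{lm:mg} (XZ-identity) by reusing the stable decompositions of Section~\ref{sec-decomp} case by case, exactly as in the BPX discussion. If anything, your handling of the consistency term $\sum_{k}\|P_k v-\Pi_k v\|_A^2$ --- via $\|P_k(v-\Pi_k v)\|_A\le\|v-\Pi_k v\|_A$ and the per-level weighted stability estimates summed over the $L\simeq|\log h|$ levels --- spells out a detail the paper leaves implicit in its remark that the results follow ``based on the discussion for the BPX preconditioner case,'' and your bookkeeping reproduces the stated exponents of $|\log h|$, $\cJ(\omega)$, and $\cJ(\rho)$ exactly.
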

\section{Numerical experiments} \label{sec-numerical}
This section contains a set of numerical experiments performed with a version
of the finite element library MFEM \cite{mfem}, which illustrate
the convergence theory developed in the preceding sections.
We focus on the commonly used $V(1,1)$-cycle Multigrid method,
and use a symmetric Gauss-Seidel iteration as a smoother.
The same smoother was also used in the BPX algorithm, whose optimal
implementation can be found in \cite{Bramble.J;Pasciak.J;Xu.J1990}.

The jump-independent estimates of the effective condition number in
Section \ref{sec-bpx} and Section \ref{sec-mg} imply that a
preconditioned conjugate gradient (PCG) acceleration will result in a
solver which is optimal with respect top the mesh size.
To investigate this, we report the number of PCG iterations needed to
reduce the relative residual by a factor of $10^{-12}$.
We use the abbreviations GS-CG, BPX-CG and MG-CG to denote the
symmetric Gauss-Seidel, BPX and Multigrid preconditioners respectively.

\begin{figure}
\centerline{\includegraphics[height=2in]{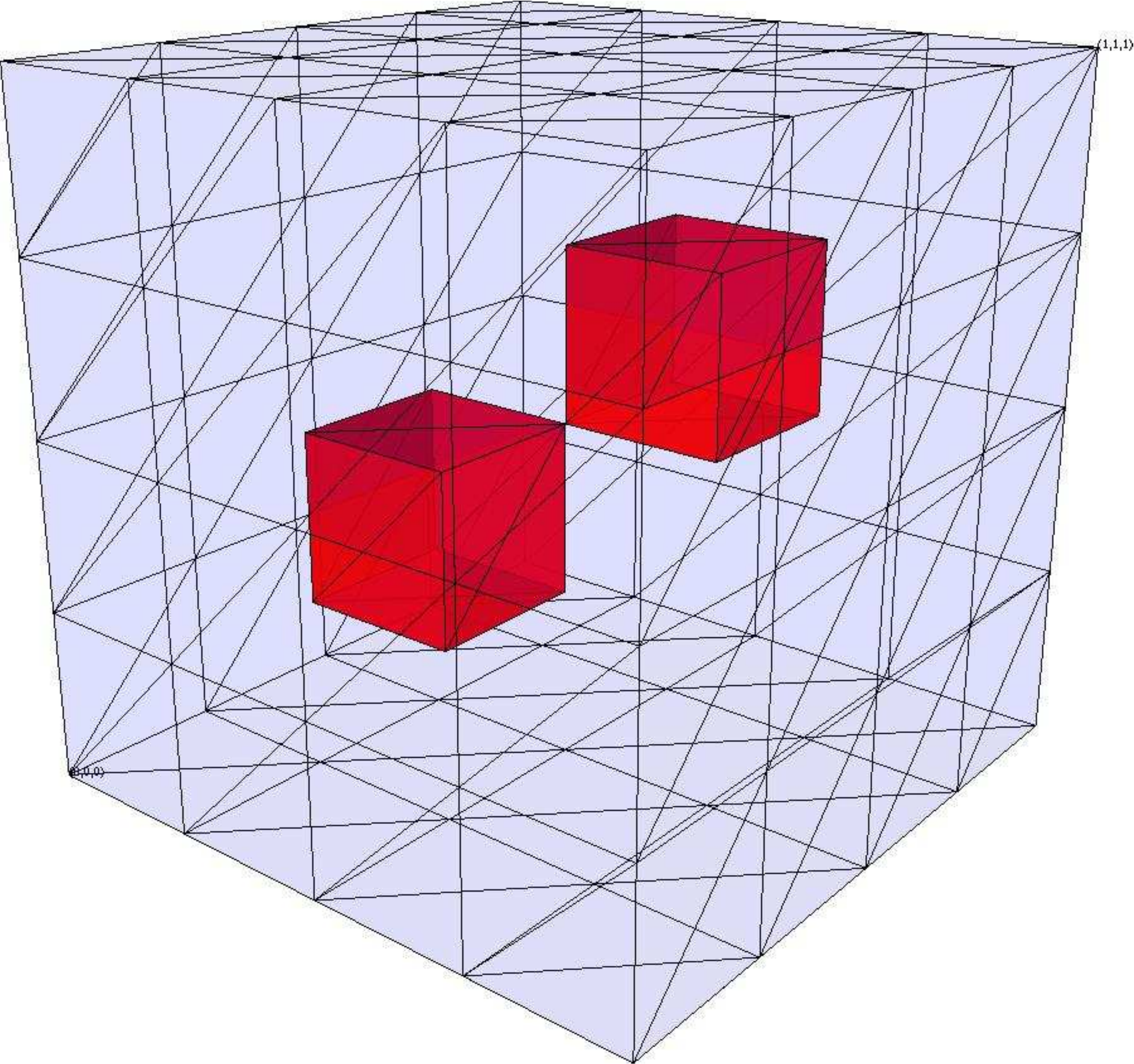}}
\caption{Geometry of the two material subdomains test problem.}
\label{fig-touch-cubes}
\end{figure}

We run a simple test problem on the unit cube, which is a model of a
soft/hard material enclosure.
As in \cite{Xu.J;Zhu.Y2008}, we only consider the two material subdomains case
pictured in Figure \ref{fig-touch-cubes}, and we let $\Omega_2$ be the
union of the two internal cubes, while $\Omega_1$ denotes the rest of
the domain.
The problem was discretized with linear finite elements on regular
tetrahedral mesh, using zero Dirichlet boundary conditions on the boundary.
The right-hand side in all the tests, was chosen to correspond to the
unit constant function, and the initial guess was a vector of zeros.
Some of the computed numerical solutions are plotted in Figure \ref{fig-sol}.

\begin{figure}
\centerline{
\includegraphics[height=2in]{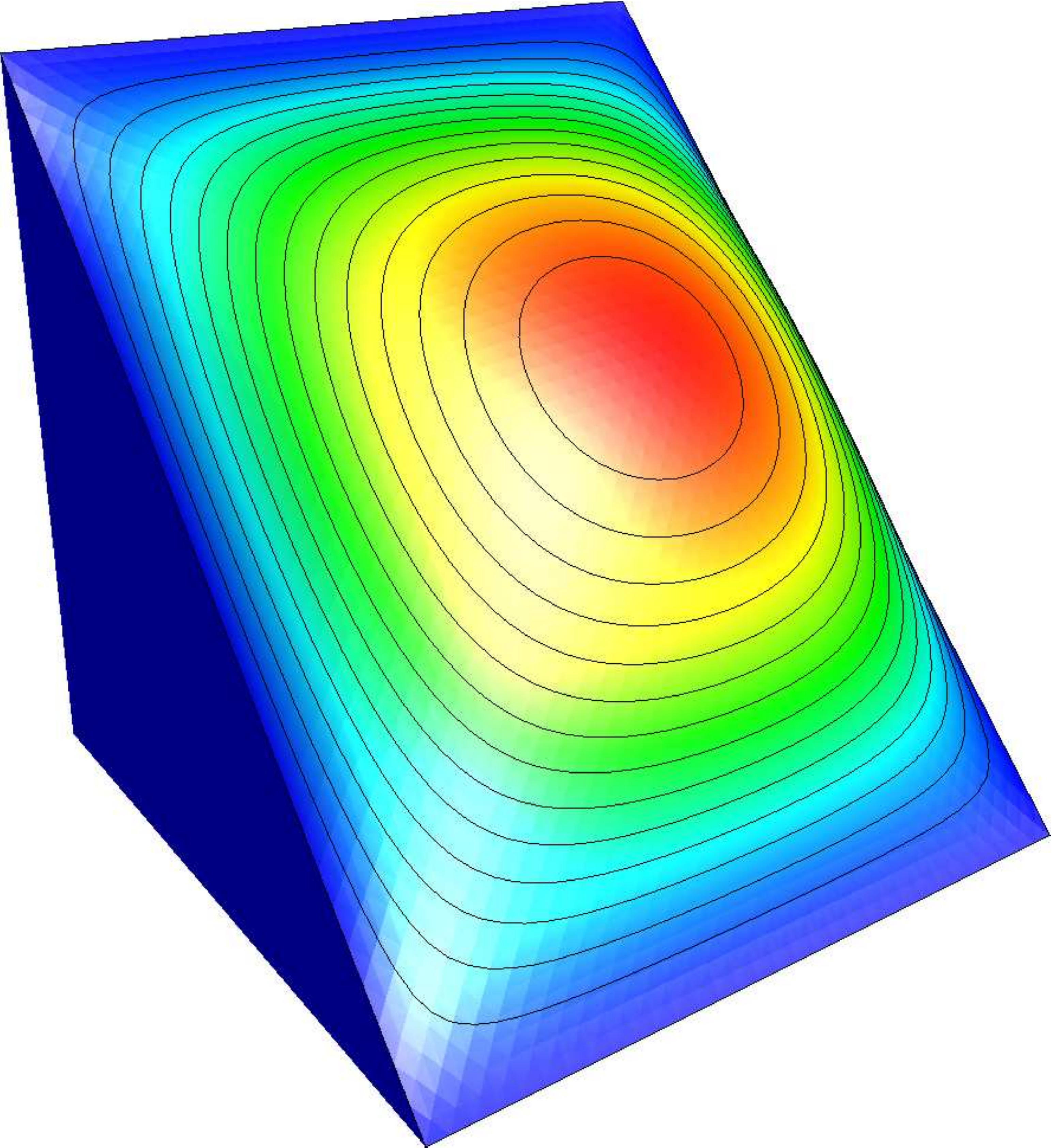}
\includegraphics[height=2in]{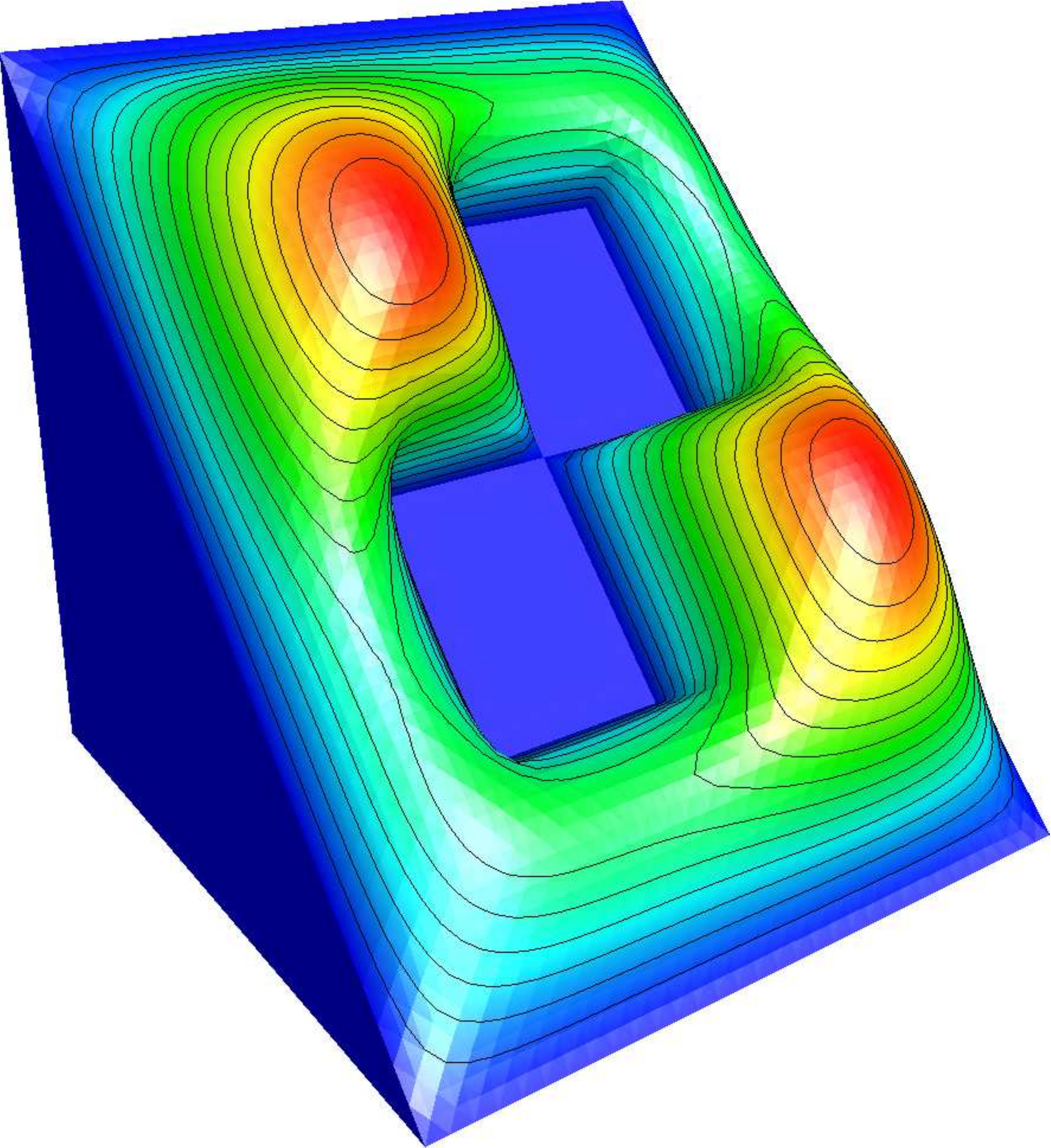}
\includegraphics[height=2in]{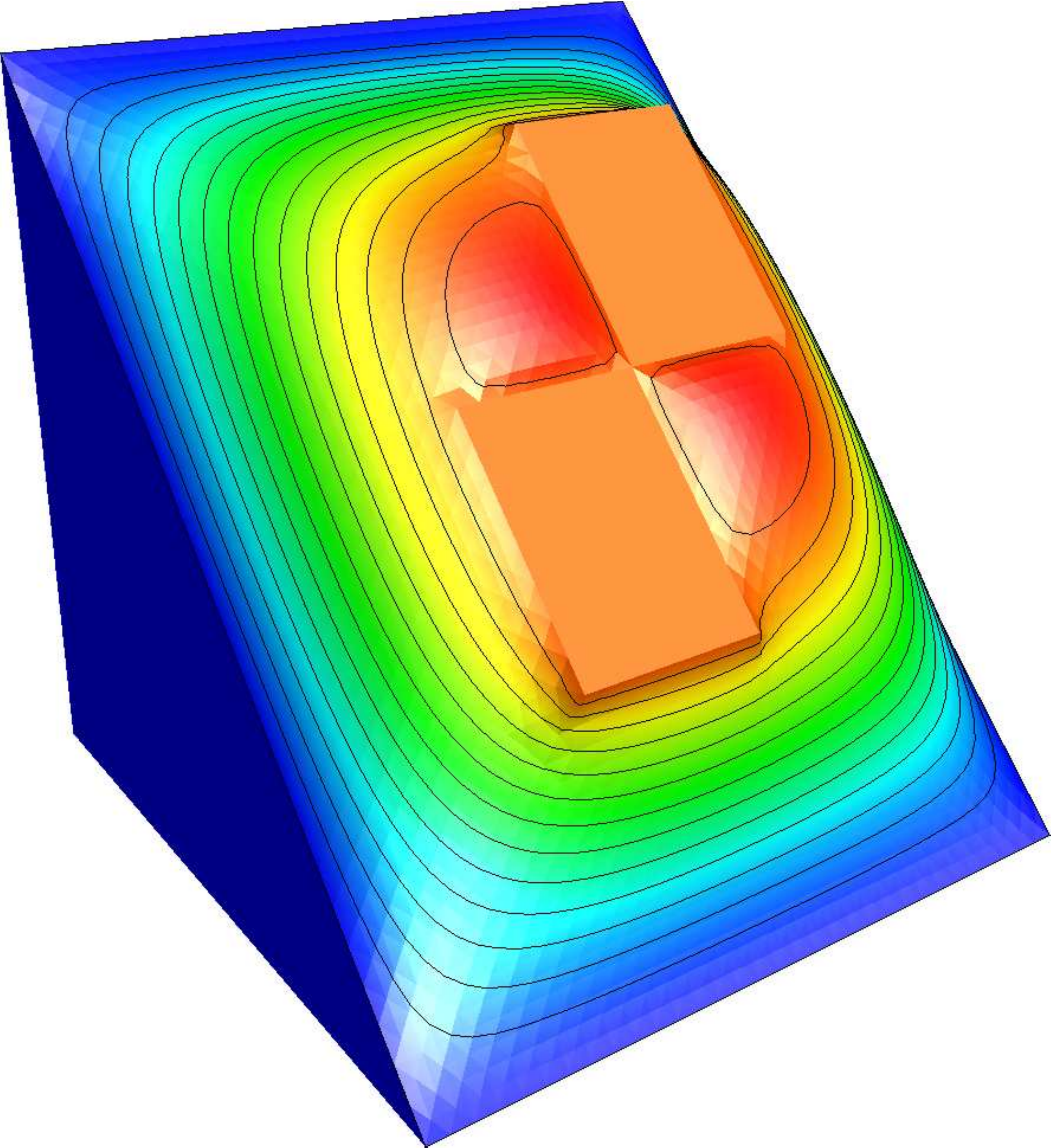}
}
\caption{Approximate solutions in a cut inside the domain corresponding to
$\omega=1$, $\rho=1$ (left); $\omega=1$, $\rho_1=1$, $\rho_2=10^8$ (center);
and $\omega_1=1$, $\omega_2=10^8$, $\rho=1$ (right).}
\label{fig-sol}
\end{figure}

Since we can always rescale the original equation, we can assume,
without a loss of generality, that $\omega_2=1$.
In particular, when $\omega$ is a constant we will set it equal to one.
This is the case that we set to explore first.

\subsection{The case of constant $\omega$}
To restrict the parameter range, we first set $\omega=1$ and allow
$\rho_1$ and $\rho_2$ to vary independently in $\{0\} \cup [10^{-8},10^8]$.
The results of Gauss-Seidel preconditioned conjugate gradient are
presented in Table \ref{table-omega1-gs-l34}.
Here $\ell$ denotes the refinement level corresponding to problem size
$N$ and mesh size $h$.
We also use the ``scientific'' notation {\tt 1e+p} to denote the number $10^p$.

{\Small
\begin{table}[!ht]
\begin{center}
{\def\arraystretch{1.2}
\begin{tabular}{|r|c||c|c|c|c|c|c|c|c|c|c|}
\hline
\multicolumn{2}{|c||}{$\omega_1=1$} & \multicolumn{10}{c|}{$\rho_1$} \\
\cline{3-12}
\multicolumn{2}{|c||}{$\omega_2=1$} & {\tt 0} & {\tt 1e-8} & {\tt 1e-6} & {\tt 1e-4} & {\tt 1e-2} & {\tt 1e-0} & {\tt 1e+2} & {\tt 1e+4} & {\tt 1e+6} & {\tt 1e+8} \\
\hline
\hline
\multicolumn{12}{|c|}{$\ell=3$, $h^2 \approx$ {\tt 1e-3}, $N=35,937$}  \\
\hline
& {\tt 0}    & 62 & 62 & 62 & 62 & 62 & {\bf 66} & 50 & 20 & 19 & 19 \\
& {\tt 1e-8} & 62 & 62 & 62 & 62 & 62 & {\bf 66} & 50 & 20 & 19 & 19 \\
& {\tt 1e-6} & 62 & 62 & 62 & 62 & 62 & {\bf 66} & 50 & 20 & 19 & 19 \\
& {\tt 1e-4} & 62 & 62 & 62 & 62 & 62 & {\bf 66} & 50 & 20 & 19 & 19 \\
& {\tt 1e-2} & 62 & 62 & 62 & 62 & 62 & {\bf 66} & 50 & 20 & 19 & 19 \\
$\rho_2$
& {\tt 1e-0} & 66 & 66 & 66 & 66 & 66 & {\bf 62} & 50 & 20 & 19 & 19 \\
& {\tt 1e+2} & 69 & 69 & 69 & 69 & 69 & {\bf 68} & 46 & 19 & 19 & 18 \\
& {\tt 1e+4} & 63 & 63 & 63 & 63 & 63 & {\bf 62} & 45 & 10 & 15 & 14 \\
& {\tt 1e+6} & 60 & 60 & 60 & 60 & 60 & {\bf 60} & 45 & 13 & 15 & 15 \\
& {\tt 1e+8} & 60 & 60 & 60 & 60 & 60 & {\bf 60} & 45 & 13 & 15 & 15 \\
\hline
\hline
\multicolumn{12}{|c|}{$\ell=4$, $h^2 \approx$ {\tt 2.5e-4}, $N=274,625$}  \\
\hline
& {\tt 0}    & 120 & 120 & 120 & 120 & 120 & {\bf 120} & 96 & 38 & 36 & 36 \\
& {\tt 1e-8} & 120 & 120 & 120 & 120 & 120 & {\bf 120} & 96 & 38 & 36 & 36 \\
& {\tt 1e-6} & 120 & 120 & 120 & 120 & 120 & {\bf 120} & 96 & 38 & 36 & 36 \\
& {\tt 1e-4} & 120 & 120 & 120 & 120 & 120 & {\bf 120} & 96 & 38 & 36 & 36 \\
& {\tt 1e-2} & 120 & 120 & 120 & 120 & 120 & {\bf 120} & 96 & 38 & 36 & 36 \\
$\rho_2$
& {\tt 1e-0} & 121 & 121 & 121 & 121 & 121 & {\bf 120} & 96 & 38 & 36 & 36 \\
& {\tt 1e+2} & 133 & 133 & 133 & 133 & 133 & {\bf 132} & 85 & 37 & 35 & 35 \\
& {\tt 1e+4} & 123 & 123 & 123 & 123 & 123 & {\bf 122} & 89 & 13 & 14 & 15 \\
& {\tt 1e+6} & 117 & 117 & 117 & 117 & 117 & {\bf 116} & 89 & 14 & 14 & 15 \\
& {\tt 1e+8} & 117 & 117 & 117 & 117 & 117 & {\bf 117} & 89 & 14 & 15 & 15 \\
\hline
\end{tabular}}
\medskip
\caption{Number of GS-CG iterations when $\omega=1$.}
\label{table-omega1-gs-l34}
\end{center}
\end{table}
}

Several things are apparent from Table \ref{table-omega1-gs-l34}.
First, when $\rho h^2 \gtrsim \omega$ (the lower right corner in the
tables) the problem is well conditioned and GS-CG is an efficient
solver.
Second, the convergence is largely independent of the jumps in $\rho$
and the number of iterations is proportional to $h^{-1}$, as expected
by Theorem \ref{th-gs}.
Finally, it is clear that the problem of hard enclosure, when $\rho_2 > \rho_1$, is
more difficult than the one of soft enclosure ($\rho_1 > \rho_2$).

Motivated by the above observations, we choose to restrict our further
experiments to the case $\omega=1$, $\rho_1=1$.
This way the results have a more compact form, as can be seen by comparing
Table \ref{table-omega1-gs-l34} and Table \ref{table-omega1-gs}.

\begin{table}[!ht]
\begin{center}
{\def\arraystretch{1.2}
\begin{tabular}{|r|r|c|c|c|c|c|c|c|c|c|c|}
\hline
  &  & \multicolumn{10}{c|}{$\rho_2$} \\
\cline{3-12}
$\ell$ &  $N$ & {\tt 0} & {\tt 1e-8} & {\tt 1e-6} & {\tt 1e-4} & {\tt 1e-2} & {\tt 1e-0} & {\tt 1e+2} & {\tt 1e+4} & {\tt 1e+6} & {\tt 1e+8} \\
\hline
1 &       729 & 18 & 18 & 18 & 18 & 18 & 18 & 18 & 16 & 16 & 16 \\
2 &     4,913 & 36 & 36 & 36 & 36 & 36 & 36 & 38 & 34 & 34 & 34 \\
3 &    35,937 & 66 & 66 & 66 & 66 & 66 & 62 & 68 & 62 & 60 & 60 \\
4 &   274,625 & 120 & 120 & 120 & 120 & 120 & 120 & 132 & 122 & 116 & 117 \\
\hline
\end{tabular}}
\medskip
\caption{Number of GS-CG iterations when $\omega=1$ and $\rho_1=1$.}
\label{table-omega1-gs}
\end{center}
\end{table}

In Tables \ref{table-omega1-bpx-cg}--\ref{table-omega1-mg-cg} we
demonstrate the performance of the BPX preconditioner and the
Multigrid solver and preconditioner on problems with constant
$\omega$.
The results indicate that BPX-CG may have a nearly-optimal convergence
rate, see Theorem \ref{thm:bpx}, while the convergence of Multigrid is
optimal.

\begin{table}[!ht]
\begin{center}
{\def\arraystretch{1.2}
\begin{tabular}{|r|r|c|c|c|c|c|c|c|c|c|c|}
\hline
  &  & \multicolumn{10}{c|}{$\rho_2$} \\
\cline{3-12}
$\ell$ &  $N$ & {\tt 0} & {\tt 1e-8} & {\tt 1e-6} & {\tt 1e-4} & {\tt 1e-2} & {\tt 1e-0} & {\tt 1e+2} & {\tt 1e+4} & {\tt 1e+6} & {\tt 1e+8} \\
\hline
1 &       729 & 20 & 20 & 20 & 20 & 20 & 20 & 19 & 19 & 19 & 18 \\
2 &     4,913 & 27 & 27 & 27 & 27 & 27 & 27 & 27 & 30 & 31 & 30 \\
3 &    35,937 & 31 & 31 & 31 & 31 & 31 & 31 & 31 & 35 & 37 & 37 \\
4 &   274,625 & 33 & 33 & 33 & 33 & 33 & 33 & 33 & 38 & 43 & 42 \\
5 & 2,146,689 & 35 & 35 & 35 & 35 & 35 & 35 & 35 & 39 & 47 & 47   \\
\hline
\end{tabular}}
\medskip
\caption{Number of BPX-CG iterations when $\omega=1$ and $\rho_1=1$.}
\label{table-omega1-bpx-cg}
\end{center}
\end{table}

{\SMALL
\begin{table}[!ht]
\begin{center}
{\def\arraystretch{1.2}
\begin{tabular}{|r|c|c|c|c|c|c|c|c|c|c|}
\hline
  &  \multicolumn{10}{c|}{$\rho_2$} \\
\cline{2-11}
$\ell$ & {\tt 0} & {\tt 1e-8} & {\tt 1e-6} & {\tt 1e-4} & {\tt 1e-2} & {\tt 1e-0} & {\tt 1e+2} & {\tt 1e+4} & {\tt 1e+6} & {\tt 1e+8} \\
\hline
1 & 16 (0.17) & 16 (0.17) & 16 (0.17) & 16 (0.17) & 16 (0.17) & 16 (0.17) & 16 (0.16) & 17 (0.16) & 17 (0.17) & 17 (0.17) \\
2 & 18 (0.20) & 18 (0.20) & 18 (0.20) & 18 (0.20) & 18 (0.20) & 18 (0.20) & 18 (0.20) & 22 (0.27) & 23 (0.28) & 23 (0.28) \\
3 & 18 (0.21) & 18 (0.21) & 18 (0.21) & 18 (0.21) & 18 (0.21) & 18 (0.21) & 18 (0.21) & 25 (0.32) & 26 (0.32) & 25 (0.32) \\
4 & 18 (0.21) & 18 (0.21) & 18 (0.21) & 18 (0.21) & 18 (0.21) & 18 (0.21) & 18 (0.21) & 26 (0.33) & 27 (0.35) & 27 (0.35) \\
5 & 18 (0.21) & 18 (0.21) & 18 (0.21) & 18 (0.21) & 18 (0.21) & 18 (0.21) & 18 (0.21) & 27 (0.34) & 29 (0.38) & 28 (0.37) \\
\hline
\end{tabular}}
\medskip
\caption{Number of Multigrid iterations and asymptotic convergence factors when $\omega=1$ and $\rho_1=1$.}
\label{table-omega1-mg}
\end{center}
\end{table}
}

\begin{table}[!ht]
\begin{center}
{\def\arraystretch{1.2}
\begin{tabular}{|r|r|c|c|c|c|c|c|c|c|c|c|}
\hline
  &  & \multicolumn{10}{c|}{$\rho_2$} \\
\cline{3-12}
$\ell$ &  $N$ & {\tt 0} & {\tt 1e-8} & {\tt 1e-6} & {\tt 1e-4} & {\tt 1e-2} & {\tt 1e-0} & {\tt 1e+2} & {\tt 1e+4} & {\tt 1e+6} & {\tt 1e+8} \\
\hline
1 &       729 &  9 &  9 &  9 &  9 &  9 &  9 &  9 &  8 &  9 &  9 \\
2 &     4,913 & 10 & 10 & 10 & 10 & 10 & 10 & 10 & 11 & 11 & 11 \\
3 &    35,937 & 10 & 10 & 10 & 10 & 10 & 10 & 10 & 12 & 12 & 12 \\
4 &   274,625 & 10 & 10 & 10 & 10 & 10 & 10 & 10 & 12 & 13 & 12 \\
5 & 2,146,689 & 10 & 10 & 10 & 10 & 10 & 10 & 10 & 12 & 13 & 13 \\
\hline
\end{tabular}}
\medskip
\caption{Number of MG-CG iterations when $\omega=1$ and $\rho_1=1$.}
\label{table-omega1-mg-cg}
\end{center}
\end{table}

\subsection{The case of constant $\rho$}
Next, we consider the case when the mass term coefficient is a constant.
As in the previous section, we first perform a parameter study to
determine an appropriate scaling of $\rho$ when $\omega_2$ is fixed to
be one.
The results are presented in Table \ref{table-rhoconst-gs-l34}, and in many
respects are similar to those from Table \ref{table-omega1-gs-l34}.
For example, the number of GS-GC iterations doubles from one level to
the next, though the actual numbers are several times larger than
those in Table \ref{table-omega1-gs-l34}.

{\Small
\begin{table}[!ht]
\begin{center}
{\def\arraystretch{1.2}
\begin{tabular}{|r|c||c|c|c|c|c|c|c|c|c|c|}
\hline
\multicolumn{2}{|c||}{$$} & \multicolumn{10}{c|}{$\rho$} \\
\cline{3-12}
\multicolumn{2}{|c||}{$\omega_2=1$} & {\tt 0} & {\tt 1e-8} & {\tt 1e-6} & {\tt 1e-4} & {\tt 1e-2} & {\tt 1e-0} & {\tt 1e+2} & {\tt 1e+4} & {\tt 1e+6} & {\tt 1e+8} \\
\hline
\hline
\multicolumn{12}{|c|}{$\ell=3$, $h^2 \approx$ {\tt 1e-3}, $N=35,937$}  \\
\hline
& {\tt 1e-8} & 117 & {\bf 173} & 90 & 60 & 59 & 53 & 35 & 15 & 15 & 15 \\
& {\tt 1e-6} & 108 & 108 & {\bf 107} & 82 & 58 & 53 & 35 & 15 & 15 & 15 \\
& {\tt 1e-4} & 97 & 97 & 97 & {\bf 97} & 75 & 52 & 35 & 15 & 15 & 15 \\
& {\tt 1e-2} & 87 & 87 & 87 & 87 & {\bf 87} & 66 & 34 & 15 & 15 & 15 \\
$\omega_1$
& {\tt 1e-0} & 62 & 62 & 62 & 62 & 62 & {\bf 62} & 46 & 10 & 15 & 15 \\
& {\tt 1e+2} & 74 & 74 & 74 & 74 & 74 & 74 & {\bf 73} & 48 & 12 & 15 \\
& {\tt 1e+4} & 68 & 68 & 68 & 68 & 68 & 68 & 68 & {\bf 69} & 48 & 12 \\
& {\tt 1e+6} & 63 & 63 & 63 & 63 & 63 & 63 & 63 & 64 & {\bf 69} & 48 \\
& {\tt 1e+8} & 59 & 59 & 59 & 59 & 59 & 59 & 60 & 59 & 65 & {\bf 69} \\
\hline
\hline
\multicolumn{12}{|c|}{$\ell=4$, $h^2 \approx$ {\tt 2.5e-4}, $N=274,625$}  \\
\hline
& {\tt 1e-8} & 348 & {\bf 347} & 178 & 107 & 102 & 90 & 62 & 15 & 15 & 15 \\
& {\tt 1e-6} & 212 & 222 & {\bf 211} & 163 & 102 & 90 & 62 & 15 & 15 & 15 \\
& {\tt 1e-4} & 193 & 193 & 193 & {\bf 192} & 150 & 91 & 62 & 15 & 15 & 15 \\
& {\tt 1e-2} & 169 & 169 & 169 & 169 & {\bf 168} & 128 & 61 & 14 & 15 & 15 \\
$\omega_1$
& {\tt 1e-0} & 120 & 120 & 120 & 120 & 120 & {\bf 120} & 85 & 13 & 14 & 15 \\
& {\tt 1e+2} & 141 & 141 & 141 & 141 & 141 & 141 & {\bf 140} & 94 & 13 & 15 \\
& {\tt 1e+4} & 132 & 132 & 132 & 132 & 132 & 132 & 132 & {\bf 132} & 94 & 14 \\
& {\tt 1e+6} & 124 & 124 & 124 & 124 & 124 & 124 & 124 & 123 & {\bf 132} & 94 \\
& {\tt 1e+8} & 113 & 113 & 113 & 113 & 113 & 113 & 113 & 112 & 123 & {\bf 132} \\
\hline
\end{tabular}}
\medskip
\caption{Number of GS-CG iterations when $\omega_2=1$ and $\rho$ is a constant.}
\label{table-rhoconst-gs-l34}
\end{center}
\end{table}
}

Examining the results in Table \ref{table-rhoconst-gs-l34}, we can
conclude that the most challenging problems occur when $\rho$ and
$\omega_1$ are of the same magnitude.
Therefore, we restrict the experiments in this section to the case
$\omega_2=1$, $\rho=\omega_1$.

\begin{table}[!ht]
\begin{center}
{\def\arraystretch{1.2}
\begin{tabular}{|r|r|c|c|c|c|c|c|c|c|c|}
\hline
  &  & \multicolumn{9}{c|}{$\omega_1$} \\
\cline{3-11}
$\ell$ &  $N$ & {\tt 1e-8} & {\tt 1e-6} & {\tt 1e-4} & {\tt 1e-2} & {\tt 1e-0} & {\tt 1e+2} & {\tt 1e+4} & {\tt 1e+6} & {\tt 1e+8} \\
\hline
1 &       729 & 30 & 25 & 23 & 21 & 18 & 19 & 19 & 19 & 19 \\
2 &     4,913 & 87 & 55 & 51 & 45 & 36 & 40 & 39 & 39 & 39 \\
3 &    35,937 & 173 & 107 & 97 & 87 & 62 & 73 & 69 & 69 & 69 \\
4 &   274,625 & 347 & 211 & 192 & 168 & 120 & 140 & 132 & 132 & 132 \\
\hline
\end{tabular}}
\medskip
\caption{Number of GS-CG iterations when $\omega_2=1$ and $\rho=\omega_1$.}
\label{table-rhoconst-gs}
\end{center}
\end{table}

The results for GS-CG are shown in Table \ref{table-rhoconst-gs}.
Clearly, the problem of hard enclosure, when $\omega_1$ is small, is
much more challenging than the case of large $\omega_1$.
In contrast to Table \ref{table-omega1-gs}, the number of iterations increases
significantly with the magnitude of the jump.
This is due to the fact that the condition number is
proportional to $\mathcal{J}(\omega)$, see Theorem \ref{th-gs} and the
discussion after Theorem 2.1 in \cite{Xu.J;Zhu.Y2008}.

\begin{table}[!ht]
\begin{center}
{\def\arraystretch{1.2}
\begin{tabular}{|r|r|c|c|c|c|c|c|c|c|c|}
\hline
  &  & \multicolumn{9}{c|}{$\omega_1$} \\
\cline{3-11}
$\ell$ &  $N$ & {\tt 1e-8} & {\tt 1e-6} & {\tt 1e-4} & {\tt 1e-2} & {\tt 1e-0} & {\tt 1e+2} & {\tt 1e+4} & {\tt 1e+6} & {\tt 1e+8} \\
\hline
1 &       729 & 21 & 22 & 22 & 22 & 20 & 20 & 20 & 20 & 20 \\
2 &     4,913 & 34 & 34 & 34 & 33 & 27 & 29 & 28 & 28 & 28 \\
3 &    35,937 & 41 & 41 & 41 & 40 & 31 & 33 & 32 & 32 & 32 \\
4 &   274,625 & 46 & 46 & 47 & 44 & 33 & 35 & 35 & 35 & 35 \\
5 & 2,146,689 & 51 & 51 & 52 & 48 & 35 & 38 & 38 & 37 & 38 \\
\hline
\end{tabular}}
\medskip
\caption{Number of BPX-CG iterations when $\omega_2=1$ and $\rho=\omega_1$.}
\label{table-rhoconst-bpx-cg}
\end{center}
\end{table}

To a lesser extend, this trend is present in the results with BPX preconditioning
reported in Table \ref{table-rhoconst-bpx-cg}.
Even though the increase in the number of iteration due to the jump in $\omega$ is
not as large as for GS-CG, the influence of $\mathcal{J}(\omega)$ on the condition
number can be observed if we plot the convergence history of the PCG iterations.
Such a plot is presented in Figure \ref{fig-bpx-cg-conv}, where one can clearly see
that when $\omega_1=10^{-8}$, PCG needs several extra iterations to resolve the
eigenvector corresponding to the isolated minimal eigenvalue, cf. Figure 3 in \cite{Xu.J;Zhu.Y2008}.

\begin{figure}
\centerline{\includegraphics[height=3in]{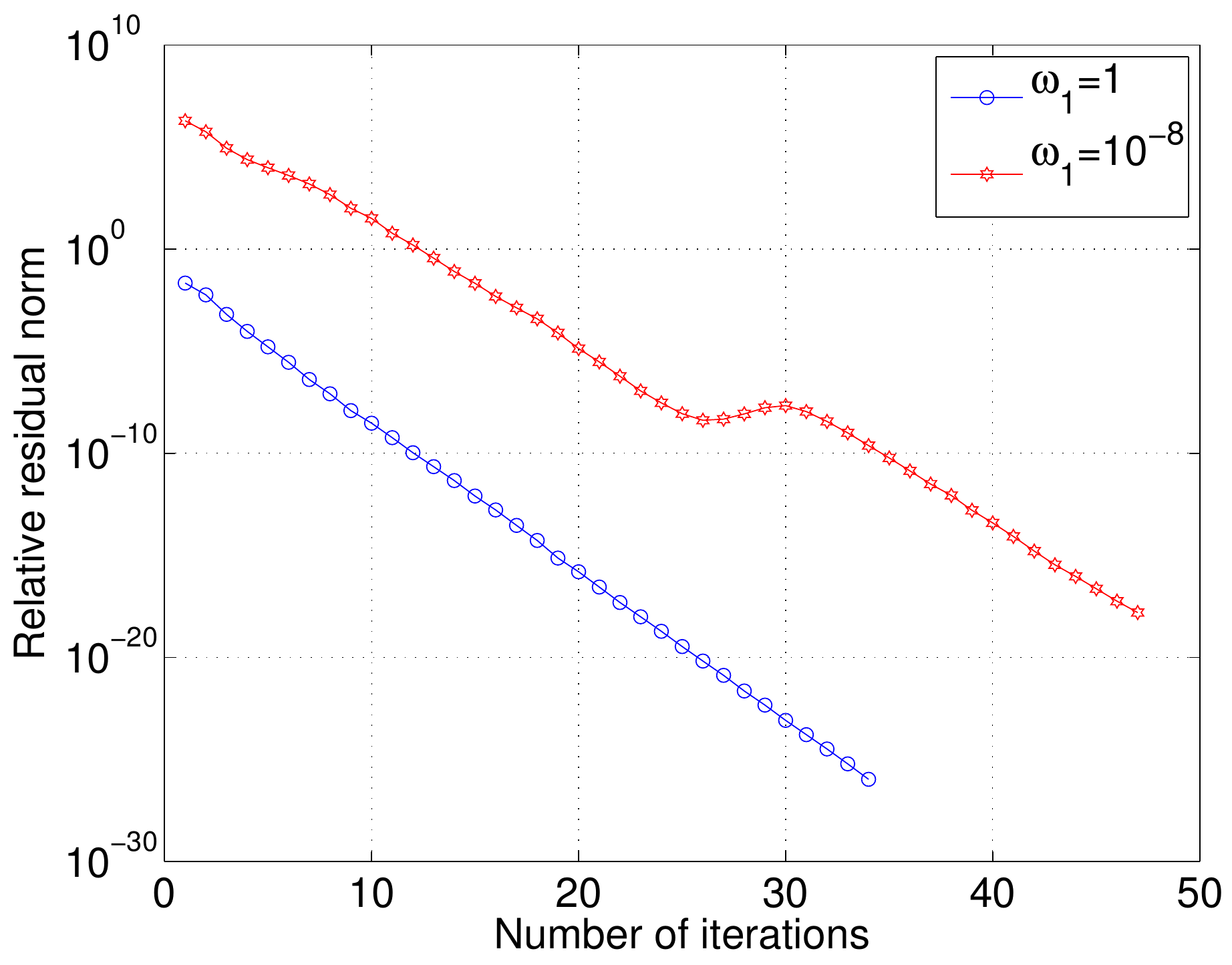}}
\caption{Convergence history for BPX-CG when $\omega_2=1$, $\rho=\omega_1$ and
$\omega_1 \in \{1, 10^{-8}\}$. Problem size $N=274,625$.}
\label{fig-bpx-cg-conv}
\end{figure}

\begin{table}[!ht]
\begin{center}
{\def\arraystretch{1.2}
\begin{tabular}{|r|r|c|c|c|c|c|c|}
\hline
  & \multicolumn{7}{c|}{$\omega_1$} \\
\cline{1-8}
$\ell$ & {\tt 1e-4} & {\tt 1e-2} & {\tt 1e-0} & {\tt 1e+2} & {\tt 1e+4} & {\tt 1e+6} & {\tt 1e+8} \\
\hline
1 &  41 (0.61) & 38 (0.55) & 16 (0.17) & 18 (0.20) & 18 (0.20) & 18 (0.20) & 18 (0.20) \\
2 & 100 (0.82) & 69 (0.74) & 18 (0.20) & 20 (0.24) & 20 (0.24) & 19 (0.24) & 19 (0.24) \\
3 & 216 (0.93) & 100 (0.81) & 18 (0.21) & 21 (0.26) & 21 (0.26) & 21 (0.27) & 21 (0.26) \\
4 & 440 (0.97) & 124 (0.85) & 18 (0.21) & 22 (0.29) & 22 (0.29) & 22 (0.29) & 22 (0.29) \\
5 & 843 (0.98) & 140 (0.87) & 18 (0.21) & 23 (0.31) & 23 (0.31) & 23 (0.31) & 23 (0.31) \\
\hline
\end{tabular}}
\medskip
\caption{Number of Multigrid iterations and asymptotic convergence factors when $\omega_2=1$ and $\rho=\omega_1$.}
\label{table-rhoconst-mg}
\end{center}
\end{table}

In the previous section we observed that Multigrid has asymptotic
convergence factor independent of the jumps in $\rho$ (see Table
\ref{table-omega1-mg}).
This is no longer true when $\omega$ is not a constant, as
demonstrated in Table \ref{table-rhoconst-mg}.
Indeed, the condition number of the Multigrid preconditioned system is
bounded by $\min \{\mathcal{J}(\omega), h^{-1}\}$, so when the jump is
large enough (as in the leftmost column) the iterations double with
each refinement level.

\begin{table}[!ht]
\begin{center}
{\def\arraystretch{1.2}
\begin{tabular}{|r|r|c|c|c|c|c|c|c|c|c|}
\hline
  &  & \multicolumn{9}{c|}{$\omega_1$} \\
\cline{3-11}
$\ell$ &  $N$ & {\tt 1e-8} & {\tt 1e-6} & {\tt 1e-4} & {\tt 1e-2} & {\tt 1e-0} & {\tt 1e+2} & {\tt 1e+4} & {\tt 1e+6} & {\tt 1e+8} \\
\hline
1 &       729 & 10 & 10 & 10 & 10 &  9 &  9 &  9 &  9 &  9 \\
2 &     4,913 & 13 & 13 & 13 & 13 & 10 & 11 & 11 & 11 & 11 \\
3 &    35,937 & 14 & 14 & 14 & 14 & 10 & 11 & 11 & 11 & 11 \\
4 &   274,625 & 15 & 15 & 15 & 15 & 10 & 11 & 11 & 11 & 11 \\
5 & 2,146,689 & 16 & 16 & 16 & 15 & 10 & 12 & 12 & 12 & 12 \\
\hline
\end{tabular}}
\medskip
\caption{Number of MG-CG iterations when $\omega_2=1$ and $\rho=\omega_1$.}
\label{table-rhoconst-mg-cg}
\end{center}
\end{table}

Using Multigrid as a preconditioner resolves this problem, since there are only
finite number of small eigenvalues corresponding to the jump in $\omega$.
The results in Table \ref{table-rhoconst-mg-cg} demonstrate a nearly optimal
convergence with respect to the mesh size.

\subsection{The case of discontinuous $\omega$ and $\rho$}\label{sec:num3}
In this section we present a numerical investigation of the general
case when both $\omega$ and $\rho$ are discontinuous.
Note that the theory developed in this paper can be applied only if we
can construct an interpolation operator which is stable in both the
$\rho$-weighted and the $\omega$-weighted $L^{2}$-inner products.
This is the case, for example if $\omega_1 \leq \omega_2$ and $\rho_1 \leq \rho_2$.

{\Small
\begin{table}[!ht]
\begin{center}
{\def\arraystretch{1.2}
\begin{tabular}{|r|c|c|c|c|c|c|c|c|c|c|}
\hline
\multicolumn{2}{|c|}{} & \multicolumn{9}{c|}{$\omega_1/\omega_2$} \\
\cline{3-11}
\multicolumn{2}{|c|}{} & {\tt 1e-8} & {\tt 1e-6} & {\tt 1e-4} & {\tt 1e-2} & {\tt 1e-0} & {\tt 1e+2} & {\tt 1e+4} & {\tt 1e+6} & {\tt 1e+8} \\
\hline
& {\tt 1e-8} & {\bf 169} & 170 & 170 & 164 & 133 & 141 & 139 & 139 & 113 \\
& {\tt 1e-6} & {\bf 193} & 194 & 191 & 169 & 133 & 141 & 139 & 139 & 113 \\
& {\tt 1e-4} & {\bf 214} & 209 & 193 & 169 & 133 & 141 & 139 & 139 & 113 \\
& {\tt 1e-2} & {\bf 344} & 213 & 193 & 169 & 132 & 141 & 138 & 138 & 122 \\
$\rho_1/\rho_2$
& {\tt 1e-0} & {\bf 347} & 222 & 193 & 169 & 120 & 141 & 132 & 132 & 132 \\
& {\tt 1e+2} & {\bf 268} & 221 & 193 & 169 & 120 & 141 & 132 & 124 & 133 \\
& {\tt 1e+4} & {\bf 111} & 164 & 192 & 169 & 120 & 141 & 132 & 124 & 133 \\
& {\tt 1e+6} & {\bf 108} & 104 & 151 & 168 & 120 & 141 & 132 & 124 & 133 \\
& {\tt 1e+8} & {\bf 101} & 101 & 100 & 131 & 120 & 141 & 132 & 124 & 132 \\
\hline
\end{tabular}}
\medskip
\caption{Number of GS-CG iterations when $\omega_2=1$, while $\omega_1$, $\rho_1$ and $\rho_2$ are allowed to vary. Each cell in the table represents a maximum over a range of values for $\rho$. Problem size $N=274,625$.}
\label{table-gen-gs-l4}
\end{center}
\end{table}
}

In Table \ref{table-gen-gs-l4} we show the results of a parameter
study based on Gauss-Seidel preconditioning.
We emphasize that each cell in this table represents a maximum over several
possible values for $\rho$, which result in a jump of the same
magnitude $\rho_1/\rho_2$.
Clearly, the difficulty of the problem is determined mostly by the
jump in $\omega$, so we choose to concentrate on the most challenging
case $\omega_1=10^{-8}.$

The results of using for BPX and Multigrid V-cycle preconditioners for
this choice of $\omega$ are shown in Table \ref{table-gen-bpx-cg} and
Table \ref{table-gen-mg-cg} respectively.
They indicate that when $\rho_1 \leq \rho_2$, the PCG behavior is
generally similar to the case when $\rho$ is a constant.
This is not surprising, since as we mentioned earlier, our convergence
theory can be applied in this special case.
When $\rho_1 > \rho_2$, the convergence deteriorates, though not
significantly.

\begin{table}[!ht]
\begin{center}
{\def\arraystretch{1.2}
\begin{tabular}{|r|r|c|c|c|c|c|c|c|c|c|}
\hline
  &  & \multicolumn{9}{c|}{$\rho_1/\rho_2$} \\
\cline{3-11}
$\ell$ &  $N$ & {\tt 1e-8} & {\tt 1e-6} & {\tt 1e-4} & {\tt 1e-2} & {\tt 1e-0} & {\tt 1e+2} & {\tt 1e+4} & {\tt 1e+6} & {\tt 1e+8} \\
\hline
1 &       729 & 20 & 20 & 20 & 21 & 21 & 21 & 21 & 21 & 21 \\
2 &     4,913 & 32 & 33 & 33 & 33 & 34 & 32 & 32 & 32 & 32 \\
3 &    35,937 & 39 & 40 & 40 & 40 & 41 & 42 & 42 & 42 & 42 \\
4 &   274,625 & 44 & 45 & 45 & 46 & 46 & 48 & 49 & 49 & 49 \\
\hline
\end{tabular}}
\medskip
\caption{Number of BPX-CG iterations when $\omega_1=10^{-8}$ and $\omega_2=1$. Each cell in the table represents a maximum over a range of values for $\rho$.}
\label{table-gen-bpx-cg}
\end{center}
\end{table}

\begin{table}[!ht]
\begin{center}
{\def\arraystretch{1.2}
\begin{tabular}{|r|r|c|c|c|c|c|c|c|c|c|}
\hline
  &  & \multicolumn{9}{c|}{$\rho_1/\rho_2$} \\
\cline{3-11}
$\ell$ &  $N$ & {\tt 1e-8} & {\tt 1e-6} & {\tt 1e-4} & {\tt 1e-2} & {\tt 1e-0} & {\tt 1e+2} & {\tt 1e+4} & {\tt 1e+6} & {\tt 1e+8} \\
\hline
1 &       729 & 10 & 10 & 10 & 10 & 10 & 10 & 10 & 10 & 10 \\
2 &     4,913 & 13 & 13 & 13 & 13 & 13 & 13 & 13 & 13 & 13 \\
3 &    35,937 & 14 & 14 & 14 & 14 & 14 & 15 & 15 & 15 & 15 \\
4 &   274,625 & 14 & 15 & 15 & 15 & 15 & 17 & 17 & 17 & 17 \\
\hline
\end{tabular}}
\medskip
\caption{Number of MG-CG iterations when $\omega_1=10^{-8}$ and $\omega_2=1$.  Each cell in the table represents a maximum over a range of values for $\rho$.}
\label{table-gen-mg-cg}
\end{center}
\end{table}

To further investigate the effect of adding jumps in $\rho$, when
$\omega$ is already discontinuous we consider a test problem in two
dimensions.
We start with the coarse triangulation shown in Figure
\ref{fig-square-36-rand} and randomly assign each coarse triangle to
one of two possible subdomains.  The mesh is then refined $\ell $
times.

\begin{figure}
\centerline{\includegraphics[height=2in]{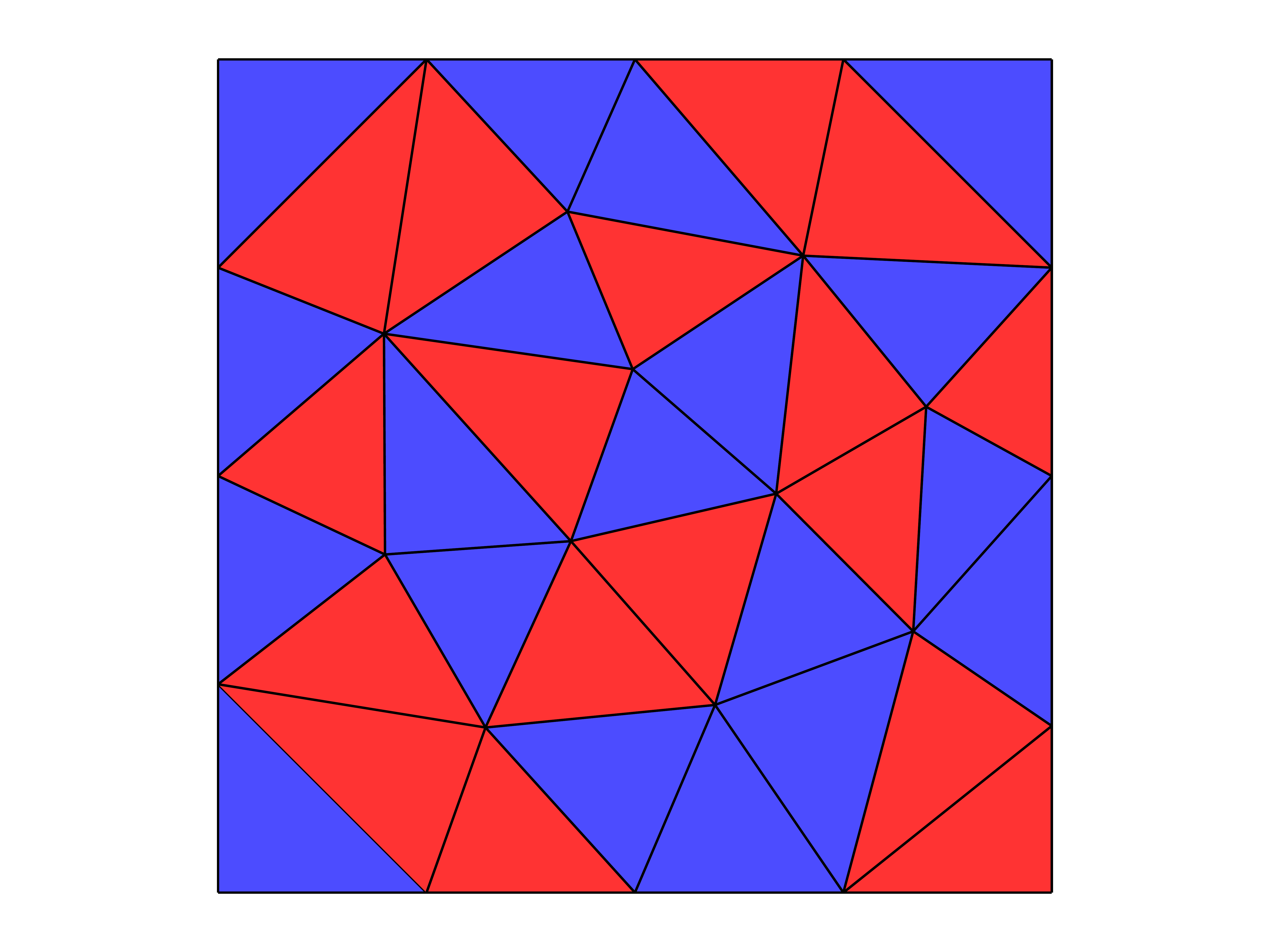}}
\caption{Coarse triangulation ($\ell=0$) and the two material subdomains for the two-dimensional test problem.}
\label{fig-square-36-rand}
\end{figure}

We focus on the case $\omega_1=10^{-8}$ and $\omega_2=1$ and allow
$\rho_1$ and $\rho_2$ to vary as in the previous experiments.
The results for BPX and Multigrid preconditioners are shown in Table
\ref{table-2d-gen-bpx-cg} and Table \ref{table-2d-gen-mg-cg}.
They appear to indicate that adding jumps in $\rho$ can lead to a
significant deterioration in the convergence of this problem.
The approximate solution corresponding to one of the most challenging
cases is plotted in Figure \ref{fig-square-36-sol}.

\begin{table}[!ht]
\begin{center}
{\def\arraystretch{1.2}
\begin{tabular}{|r|r|c|c|c|c|c|c|c|c|c|}
\hline
  &  & \multicolumn{9}{c|}{$\rho_1/\rho_2$} \\
\cline{3-11}
$\ell$ &  $N$ & {\tt 1e-8} & {\tt 1e-6} & {\tt 1e-4} & {\tt 1e-2} & {\tt 1e-0} & {\tt 1e+2} & {\tt 1e+4} & {\tt 1e+6} & {\tt 1e+8} \\
\hline
4 &     4,737 & 49 & 50 & 51 & 53 & 56 & 63 & 64 & 64 & 64 \\
5 &    18,689 & 57 & 58 & 59 & 62 & 66 & 78 & 79 & 79 & 79 \\
6 &    74,241 & 63 & 67 & 67 & 74 & 77 & 93 & 95 & 95 & 95 \\
7 &   295,937 & 73 & 76 & 76 & 87 & 93 & 109& 122& 123& 123\\
8 & 1,181,697 & 81 & 83 & 83 & 100& 110& 125& 164& 164& 164\\
9 & 4,722,689 & 88 & 90 & 90 & 114& 127& 141& 209& 211& 211\\
\hline
\end{tabular}}
\medskip
\caption{Two-dimensional test problem: Number of BPX-CG iterations when $\omega_1=10^{-8}$ and $\omega_2=1$.  Each cell in the table represents a maximum over a range of values for $\rho$.}
\label{table-2d-gen-bpx-cg}
\end{center}
\end{table}

\begin{table}[!ht]
\begin{center}
{\def\arraystretch{1.2}
\begin{tabular}{|r|r|c|c|c|c|c|c|c|c|c|}
\hline
  &  & \multicolumn{9}{c|}{$\rho_1/\rho_2$} \\
\cline{3-11}
$\ell$ &  $N$ & {\tt 1e-8} & {\tt 1e-6} & {\tt 1e-4} & {\tt 1e-2} & {\tt 1e-0} & {\tt 1e+2} & {\tt 1e+4} & {\tt 1e+6} & {\tt 1e+8} \\
\hline
4 &     4,737 & 18 & 18 & 18 & 19 & 20 & 23 & 23 & 23 & 23 \\
5 &    18,689 & 19 & 21 & 21 & 21 & 22 & 26 & 26 & 26 & 26 \\
6 &    74,241 & 21 & 23 & 23 & 23 & 25 & 29 & 30 & 30 & 30 \\
7 &   295,937 & 23 & 25 & 25 & 25 & 27 & 32 & 40 & 40 & 40 \\
8 & 1,181,697 & 26 & 26 & 26 & 27 & 30 & 36 & 52 & 53 & 53 \\
9 & 4,722,689 & 28 & 28 & 28 & 30 & 32 & 40 & 65 & 66 & 66 \\
\hline
\end{tabular}}
\medskip
\caption{Two-dimensional test problem: Number of MG-CG iterations when $\omega_1=10^{-8}$ and $\omega_2=1$.  Each cell in the table represents a maximum over a range of values for $\rho$.}
\label{table-2d-gen-mg-cg}
\end{center}
\end{table}

\begin{figure}
\centerline{
\includegraphics[height=2in]{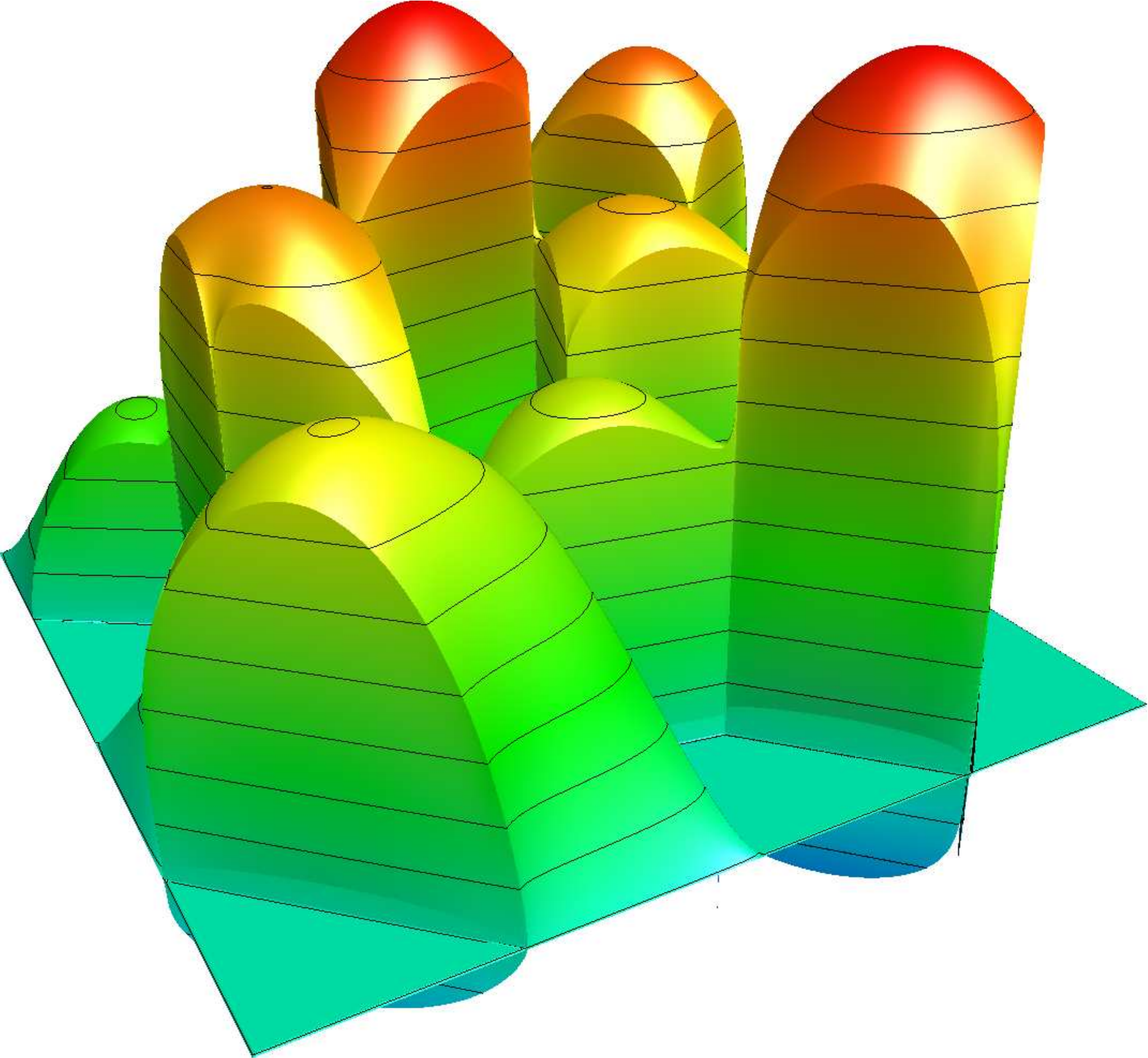}
}
\caption{Approximate solutions corresponding to $\omega_1=10^{-8}$, $\omega_2=1$, $\rho_1=10^4$ and $\rho_2=1$.}
\label{fig-square-36-sol}
\end{figure}

\end{document}